\DeclareMathAlphabet{\mathpzc}{OT1}{pzc}{m}{it}
\DeclareSymbolFont{SY}{U}{psy}{m}{n}
\theoremstyle{plain}
\newtheorem{thm}{Theorem}[section]
\newtheorem{cor}[thm]{Corollary}
\newtheorem{lem}[thm]{Lemma}
\newtheorem{prop}[thm]{Proposition}
\theoremstyle{definition}
\newtheorem{defn}[thm]{Definition}
\newtheorem{rem}[thm]{Remark}
\numberwithin{equation}{section}
\def\norm#1{\left\lVert{#1}\right\rVert}
\def\setof#1{\{{#1}\}}
\def\inp#1,#2{\left\langle{#1},{#2}\right\rangle}
\def\N{\mathcal{N}}
\def\bN{\mathbb{N}}
\def\R{\mathbb{R}}
\def\m{\mathcal}
\def\P{\mathcal{P}}
\def\e{\varepsilon}
\def\beq{\begin{eqnarray}}
\def\eeq{\end{eqnarray}}
\def\beqa{\begin{eqnarray*}}
	\def\eeqa{\end{eqnarray*}}
\def\M{\mathcal{M}}
\def\<{\langle}
\def\>{\rangle}
\def\Z{\mathbb{Z}}
\def\c*{C^{*}}
\def\*{*}
\def\eps{\varepsilon}
\def\P{\mathcal{P}}
\def\m*{m^\*}
\def\F{\mathcal{F}}
\def\actson{\curvearrowright}
\def\P{\mathscr{P}}
\def\supp{\textrm{supp}}
\def\c{\mathcal{C}}
\def\interval#1{\left\llbracket{#1}\right\rrbracket}
\def\abs#1{\left\lvert{#1}\right\rvert}
\newcounter{cnt1}
\newcounter{cnt2}
\newcounter{cnt3}
\newcommand{\blr}{\begin{list}{$($\roman{cnt1}$)$}
		{\usecounter{cnt1} \setlength{\topsep}{0pt}
			\setlength{\itemsep}{0pt}}}
	\newcommand{\bla}{\begin{list}{$($\alph{cnt2}$)$}
			{\usecounter{cnt2} \setlength{\topsep}{0pt}
				\setlength{\itemsep}{0pt}}}
		\newcommand{\bln}{\begin{list}{$($\arabic{cnt3}$)$}
				{\usecounter{cnt3} \setlength{\topsep}{0pt}
					\setlength{\itemsep}{0pt}}}
			\newcommand{\el}{\end{list}}
\title[Ergodic Average Dominance]{Ergodic Average Dominance for Unimodular Amenable Groups}
\author[Ujan Chakraborty]{Ujan Chakraborty
}
\address{School of Mathematics and Statistics, University of Glasgow, University Avenue, Glasgow G12 8QQ, UK}
\email[Ujan Chakraborty]{Ujan.Chakraborty@glasgow.ac.uk}
\author[Runlian Xia]{Runlian Xia
}
\address{School of Mathematics and Statistics, University of Glasgow, University Avenue, Glasgow G12 8QQ, UK}
\email[Runlian Xia]{Runlian.Xia@glasgow.ac.uk}
\author[Joachim Zacharias]{Joachim Zacharias
}
\address{School of Mathematics and Statistics, University of Glasgow, University Avenue, Glasgow G12 8QQ, UK}
\email[Joachim Zacharias]{Joachim.Zacharias@glasgow.ac.uk}
\keywords{ergodic theorems, amenable groups, von Neumann algebras, random walks}
\subjclass[2020]{Primary 46L55, 46L51, 37A30; Secondary   46L53, 37A15}
\begin{document}

\begin{abstract}
We prove that ergodic averages of actions of second countable unimodular amenable groups along suitable F{\o}lner sequences are dominated by Ces\`aro means of a certain Markov operator, i.e. by ergodic averages of an integer action. The required condition on the F{\o}lner sequence is mild: every two-sided F{\o}lner sequence admits a subsequence satisfying it. 
As a consequence, maximal and pointwise ergodic theorems for unimodular amenable group actions follow directly from the classical results for integer actions. Whilst other proofs of these ergodic theorems using various harmonic analytic techniques are known, our combinatorial method is considerably simpler and applies to both commutative and noncommutative settings, putting them on an equal footing and avoiding technical difficulties associated with group complexity or noncommutativity. The domination principle introduced here may be of independent interest in ergodic theory.

    
\end{abstract}

\maketitle

\section{Introduction}
    Ergodic theory has been an area of active research for more than a century, since Boltzmann proposed the Ergodic Hypothesis for molecular dynamics in 1898. 
    The rigorous mathematical formulation of these hypotheses took decades and was initially pioneered by the likes of von Neumann \cite{vN32} and Birkhoff \cite{Bi31}, who proved ergodic theorems for flows (action of $\mathbb{R}$).  
    When $T$ is an ergodic measure preserving transformation of a probability space $(X,\mu)$, Birkhoff's result implies that for integrable functions $f$, $$M_n f(s):=\frac{1}{n}\sum_{j=0}^{n-1}f(T^js) \longrightarrow\int_X f d\mu, 
    \text{ a.e. } s\in X.$$
Von Neumann's slightly earlier result had already shown convergence in the $L_2$-norm.
    The condition of ergodicity on $T$ can also be relaxed by replacing the limit by a projection $\mathcal{P}$ onto the subspace of $T$-invariant functions.
    With the evolution of dynamics through the decades, ergodic theory came to the study of the convergence of ergodic averages for more general groups.  
    One natural class of groups to consider is the class of amenable groups. 
    For such groups, there is a natural way to take ergodic averages using F{\o}lner sequences, but this process becomes very delicate in this general setting, 
    so that establishing pointwise ergodic theorems for general locally compact amenable groups was an open problem for a long time. It was only fully resolved in 2001 by Lindenstrauss who provided the first proof of the following theorem in \cite{Li01}.
 Let $G$ be a second countable amenable group with Haar measure $\lambda$. 
    A F{\o}lner sequence for $G$ is said to be tempered if there exists a constant $C>0$ such that $\lambda\left(\bigcup_{i=1}^{n-1}F_i^{-1} F_n \right) \leq C \lambda(F_n)$ for all $n \geq 2$. Any F{\o}lner sequence admits a subsequence satisfying this temperedness condition. 
\begin{thm}[Lindenstrauss (2001) \cite{Li01}]\label{thm: Lindenstrauss} Let $G$ be a locally compact and second countable amenable group, $(F_n)$ a tempered F{\o}lner sequence and 
$\alpha$ a measure preserving action of $G$ on a probability space $(X,\mu)$ then 
$$A_n(f)(s):=\frac{1}{\lambda(F_n)}\int_{F_n}\alpha_g(f) (s) d\lambda(g) \longrightarrow \mathcal{P}(f)(s), \text{ a.e. } s\in X,$$
for any $f\in L_p(X,\mu)$, $1\leq p<\infty$,
    where $\mathcal{P}$ is the projection onto the subspace of $G$-invariant functions.
\end{thm} 
    We outline a standard recipe to prove such pointwise convergence below, which was also followed in \cite{Li01}. First, norm convergence in $L_p$ is obtained fairly easily, and this implies almost everywhere convergence for all bounded functions in $L_p$, which form a dense subset. Then, overcoming considerable technical difficulties, one establishes a maximal inequality via transference principles \cite{CW76}. Finally, one uses the Banach principle to extend the property of almost everywhere convergence from the dense subset of bounded functions in $L_p$ to all of $L_p$. 
   The maximal inequalities are not easy to obtain for general amenable groups, and usually constitute the most involved step of the process. 
   The reader is directed to \cite{AAB10} and \cite{Ne06} for further details. 
    
    In parallel, von Neumann algebras 
    came to the forefront of contemporary analysis as ``noncommutative measure spaces'' and questions arose regarding the possibility of establishing ergodic theorems in the setting of von Neumann algebras.
    The first question is what pointwise convergence might mean in this context. For finite measure spaces, restricting to bounded functions, pointwise almost everywhere convergence implies almost uniform convergence, by Egorov's theorem, meaning that for any $\varepsilon>0$, there exists $X_\varepsilon \subset X$ such that $\mu(X\backslash X_\varepsilon)<\varepsilon$, and $\lim_{n\rightarrow \infty}\norm{\left(M_n(f)-\mathcal{P}(f)\right)\chi_{X_\varepsilon}}_\infty =0$. 
    In this formulation one can see that this notion of convergence  makes complete sense in the noncommutative setting as well, if one replaces $f$ with von Neumann algebra elements (or affiliated operators), 
     and $\chi_{X_\varepsilon}$ with an in a suitable sense ``large" projection $e$ in the von Neumann algebra, leading to the notion of almost uniform convergence (Definition \ref{defintion au bau}) in the noncommutative setting. 
     In this setting we may require $\lim_n \norm{(x_n-x)e}_\infty = 0$ 
     or $\lim_n \norm{e(x_n-x)e}_\infty =0$, defining almost uniform (a.u.) or bilateral almost uniform (b.a.u.) convergence, respectively. These concepts are distinct, the former implying the latter but not conversely.  
    We shall refer to ergodic theorems with this notion of ``pointwise'' convergence as ``individual'' ergodic theorems in our paper. 
    The first individual ergodic theorem in this setting was proved by Lance \cite{La76} for state-preserving automorphisms of finite von Neumann algebras. 
Then Yeadon \cite{Ye77} proved ergodic theorems for preduals of semifinite von Neumann algebras (the case for $L_1$). However, ergodic theorems  for general noncommutative $L_p$-spaces for $p\in(1, \infty)$ were more involved mainly due to difficulties in formulating the maximal inequalities in the noncommutative case. This was resolved by Junge and Xu in \cite{JX07}.  
    Later, Hong, Liao and Wang \cite{HLW21} extended their results to groups of polynomial growth. 
    Very recently, Cadilhac and Wang achieved a major breakthrough in \cite{CW22}, proving the following 
    individual ergodic theorems for amenable groups acting on noncommutative $L_p$-spaces.
    
\begin{thm}[Cadilhac and Wang (2022)]\label{thm: Cailhac and Wang}
Let $G$ be amenable and $\alpha$ be a trace preserving action on a tracial von Neumann algebra $\M$. Then there is a F{\o}lner sequence $(F_n)$ such that for any $x \in L_p(\M)$ with $1 \leq p <2$ (resp. $2\leq p < \infty$),
$$A_n(x):=\frac{1}{\lambda(F_n)}\int_{F_n}\alpha_g(x) d \lambda(g) \to  P(x)$$
bilaterally almost uniformly (resp. almost uniformly),
where $P$ is the projection onto the space of fixed points of $\alpha$.
\end{thm}    
    Their proof of the maximal inequality involves noncommutative Calder{\'o}n–Zygmund decomposition and transference techniques, dyadic-like martingale methods on amenable groups,
 and quasi-tiling techniques to construct admissible F{\o}lner sequences. 


 The goal of our paper is to prove an ergodic average dominance (see Theorem 1.3 or \ref{action dominance}) for unimodular amenable groups, using random walk techniques. As an application of this, we provide a new proof of individual ergodic theorems (both commutative and noncommutative) for these groups, circumventing the difficulties associated with proving the maximal inequalities arising from either the amenable group structure or the noncommutativity of the underlying space.
 There is a long history of using random walk techniques to prove ergodic theorems: 
    in particular, Oseledets \cite{Os65} proved weighted ergodic theorems for the iterates of Markov operators associated to actions of arbitrary locally compact groups, so effectively integer actions but without relating them to any averages over the group. 
    There is also a rich and longstanding tradition of studying random walks on groups, particularly on amenable groups. 
    One particular set of results was provided by Kaimanovich and Vershik \cite{KV83}, which demonstrated that for any discrete countable amenable group, there exists a nondegenerate probability measure (i.e. the semigroup generated by its support is the whole group) whose convolution powers yield a Reiter sequence. However, to the best of our knowledge, this result has not been applied to prove ergodic theorems associated with F{\o}lner sequences for amenable groups. 
    Inspired by \cite{KV83},  
    we show in Proposition \ref{measure comparison in terms of sizes} that for any unimodular locally compact second countable amenable group $G$, a nondegenerate probability measure $\omega$ can be constructed such that the Ces\`aro means of its convolution powers
    dominate (up to a constant $C$) the uniform probability measures supported on a F{\o}lner sequence $\{F_n\}_{n \in \mathbb{N}}$:
    \begin{align*}\label{MC}
    {\frac{\chi_{F_n}}{\lambda(F_n)} \leq C \frac{1}{N(n)}\sum_{j=0}^{N(n)-1}\frac{d\omega^{(j)}}{d\lambda},\tag{MC}}
    \end{align*}
    where $N$ is an increasing function on $\mathbb{N}$. 
    This probability measure $\omega$ then provides us with the dominating Markov operator $T(x)=\int_G \alpha_g(x) d\omega(g)$, where $x\in L_p(\M)$ and  $\omega \in L_1^+(G, \lambda)$ only depending on $\{ F_n \}_{n \in \mathbb{N}}$, not on $\alpha$, allowing us to prove our main result in section 3:
    \begin{thm}(cf \ref{action dominance} below)
    For every two-sided F{\o}lner sequence of $G$, there exists a subsequence 
    $\{F_n\}_{n \in \mathbb{N}}$, a constant $C > 0$, a strictly increasing function $N:\mathbb{N} \to \mathbb{N}$, and 
    a completely positive contractive trace preserving linear map $T$ 
    such that for any $x$ in $L_p^+(\M)$, {$p \in [1, \infty]$} and 
    large enough $n$,
    \begin{align}\label{Ergodic Operator Inequality}
    \frac{1}{\lambda{(F_n)}} \int_{F_n} \alpha_g(x) d\lambda(g) &\leq C \frac{1}{N(n)}\sum_{j=0}^{N(n)-1} T^j(x).\tag{AD}
    \end{align} 
\end{thm}
    As applications, we obtain Theorem \ref{thm: Lindenstrauss} and Theorem \ref{thm: Cailhac and Wang} for unimodular amenable groups acting on tracial and nontracial von Neumann algebras
    directly from the corresponding theorems for integer actions in \cite{JX07}.
    This technique of proving ergodic theorems for unimodular amenable groups is considerably simpler than the ones in \cite{CW22} and \cite{Li01},
    since we no longer need to prove maximal and individual ergodic theorems from scratch using heavy machinery. 
    The traditional way to prove a maximal inequality for amenable groups proceeds via a transference principle in the same lines as Calder{\`o}n \cite{Ca68, CW76}, which shows that the maximal inequality for the action of the group on a measure space / a von Neumann algebra can be inferred from a maximal inequality for the translation action of the group on itself. 
    Instead, in our approach, easier functional-analytic techniques allow us to reduce the case of a group action to the integer action case using \eqref{Ergodic Operator Inequality}, and further reduce the problem to a comparison of probability measures on the group \eqref{MC}. 
    Our innovative technique essentially transfers the difficulty of proving a maximal inequality for the group action (a problem in noncommutative harmonic analysis and ergodic theory) to the construction of an appropriate probability measure on the group by combinatorial techniques (a problem in geometric group theory). 
    The core techniques being purely combinatorial, do not discriminate between the classical and noncommutative cases.
    
    We should also point out that the idea of comparing measures using random walk techniques was initiated in \cite{HLW21} to prove ergodic theorems for groups of polynomial growth. 
    Similar Markov operator techniques have been used in \cite{Buf02, ADe06} to prove ergodic theorems for free groups. However, the Gaussian estimates used in \cite{HLW21} to obtain the comparison cannot be extended beyond groups of polynomial growth and appear to require highly non-trivial new ideas to work for general amenable groups.

    The structure of the paper is as follows: 
    in Section 2, we provide the preliminaries for our paper, in particular regarding noncommutative $L_p$-spaces, some background in noncommutative harmonic analysis, and the Junge-Xu ergodic theorem. 
    In section 3, we prove our main results: we formulate the ergodic average dominance stated above and prove it through comparison of probability measures on the group.  
    In section 4, we show the maximal inequality as a corollary, and we also provide  proofs of individual ergodic theorems directly from our ergodic average dominance. 
    In section 5, we extend our results to the nontracial setting (Haagerup $L_p$-spaces) for Markov operators commuting with the modular automorphism group. This allows us to prove an individual ergodic theorem also in this setting. 
    In section 6, we provide explicit computations of F{\o}lner sequences which make the ergodic average dominance above true for groups of polynomial growth, and for the lamplighter group which is of exponential growth.

\section{Preliminaries}

\subsection{Noncommutative $L_p$-spaces in the Tracial Setting}
Let $(\M, \tau)$ denote a semifinite von Neumann algebra with a semifinite faithful normal (s.f.n.) trace. 
Denote by $S_+$ the set of all $x \in \M_+$ such that $\tau(\supp(x)) < \infty$, where $\supp(x)$ is the  support projection of $x$. 
Let $S$ denote the linear span of $S_+$. $S$ is a strongly dense involutive ideal of $\M$. For $p\in [1, \infty)$, $x \in S$, we define its $L_p$-norm 
$$\norm{x}_p := \left( \tau (\left| x \right| ^p) \right) ^{\frac{1}{p}}.$$ We define $L_p(\M)$ to be the $\norm{ \cdot}_p$-norm completion of $S$, and we identify $L_\infty(\M)$ with $ \M$. It is direct to see that $L_2(\M)$ is a Hilbert space with an inner product given by $\langle x, y \rangle := \tau \left( x y^* \right)$. For more background on noncommutative $L_p$-spaces, we refer to \cite{PX03}.

There is no notion of ``pointwise'' almost everywhere convergence per se in the noncommutative setting, but one has a counterpart of the notion of almost uniform convergence. This was explored in \cite{La76} (see \cite{Ja85} for more details). We recall the following definitions as stated in \cite{JX07}:
\begin{defn}\label{defintion au bau}
     A sequence of operators $\left(x_n\right)_{n \in \bN}\in L_p(\M)$ is said to converge \textit{bilaterally almost uniformly} (abbreviated as b.a.u.) to $x$ if for any $\varepsilon > 0$ there exists a projection $e \in \M$ such that $\tau(1_\M - e) < \varepsilon$ and $\norm{e ( x_n -x ) e}_\infty \to 0$. It is said to converge \textit{almost uniformly} (abbreviated as a.u.) to $x$ if for any $\varepsilon > 0$ there exists a projection $e \in \M$ such that $\tau(1_\M - e) < \varepsilon$ and $\norm{(x_n - x) e}_\infty \to 0$.
\end{defn}
It can be observed directly that a.u. convergence implies b.a.u. convergence, but not the converse. 
A counterexample is noted in \cite[Theorem 6.1 ]{HR24}.

\subsection{Convolutions of Measures, Random Walks, and the Markov Operator}
{We begin by recalling some basic facts for harmonic analysis on groups, for which we cite \cite{Fo16} as a source for further reading. Let $G$ be a locally compact second countable group with a left Haar measure $\lambda$. 
Let $\Delta$ denote the modular function of $G$, which is a continuous group homomorphism from $G$ to $\left( \mathbb{R}^+, \times \right)$, such that $\lambda(Eg) = \Delta(g) \lambda(E)$ for every measurable set $E \subset G$, and every $g \in G$. If we consider the right Haar measure given by $\tilde{\lambda}(E) = \lambda(E^{-1})$, then one may obtain that the Radon-Nikodym derivative $\frac{d\tilde{\lambda}}{d\lambda} = \Delta^{-1}$. We recall that if $f_1$ and $f_2$ are two functions in $L_1(G, \lambda)$, then their convolution product (which turns $L_1(G, \lambda)$ into a Banach $*$-algebra), is given by 
$$\left( f_1 * f_2 \right) (g) 
= \!\int \!\! f_1(h) f_2(h^{-1} g) d\lambda(h) 
= \!\int \!\! f_1(gh^{-1}) f_2(h) \Delta(h^{-1}) d\lambda(h) 
= \!\int \!\! f_1(gh^{-1}) f_2(h) d\tilde{\lambda}(h).$$
We call a group \textit{unimodular} if $\Delta \equiv 1$ is a constant function. 
If $\mu$ and $\nu$ are two finite Borel measures on $G$, then their convolution is defined as 
$$\left( \mu * \nu \right) (E) := \int_G \int_G 1_E(gh) d\mu(g) d\nu(h),$$ 
for any measurable subsets $E \subset G$. One may verify directly that if the measures $\mu$ and $\nu$ are absolutely continuous with respect to $\lambda$, then the measure $\mu * \nu$ is the unique measure whose density is given by $\frac{d\mu}{d\lambda} * \frac{d\nu}{d\lambda}$. 
Let $\omega$ be a probability measure on $G$,  denote by $\omega^{(n)}$ the convolution of $\omega$ with itself $n$ times. 

Let us consider a {$w^\star$-continuous} trace-preserving action $\alpha: G \curvearrowright \left( \M, \tau \right)$. We define the Markov operator on $\M$: for all $x \in\M$,
\begin{align}\label{T}
    T(x) := \int_G \alpha_g(x) d\omega(g).
\end{align}
It is easy to see that $$T^n(x) = \int_G \alpha_g(x) d\omega^{(n)}(g),  \text{ for all } n \in \bN .$$ Moreover, one can check $T$ satisfies the following three conditions:
 \begin{enumerate}
    \item[(T1)] \label{Tprops1} $T$ is a contraction on $\M$: $\norm{T(x)} \leq \norm{x}$ for all $x \in \M$;
    \item[(T2)] \label{Tprops2} $T$ is positive: $T(x) \geq 0$ for all $x \geq 0$; and
    \item[(T3)] \label{Tprops3} $\tau \circ T \leq \tau$: $\tau\left(T(x)\right) \leq \tau(x)$ for all $x \in L_1(\M) \cap \M_+$.
\end{enumerate}
In \cite[Lemma 1.1]{JX07} it was shown that any $T$ that satisfies the above conditions extends to a positive normal contraction on $L_p(\M)$ for {$p \in [1, \infty]$}.

\subsection{The Junge-Xu Ergodic Theorems}


In noncommutative $L_p$-spaces, there are certain general difficulties in formulating maximal inequalities in a form similar to that for the commutative counterpart, as outlined in \cite{JX07}. 
The following notions were introduced by Pisier \cite{Pi98} for hyperfinite von Neumann algebras, and generalised by Junge \cite{Ju02}.
For all $p \in [1, \infty]$, we define $L_p(\M; \ell_\infty)$ to be the space of all sequences $x = \left(x_n\right)_{n\in \mathbb{N}}$ in $L_p(\M)$ which admit a factorisation of the following form: 
there exist $a,b \in L_{2p}(\M)$ and a bounded sequence $\left(y_n\right)_{n \in \mathbb{N}}$ in $\M$ such that for each $n\in \mathbb{N}$, $x_n = a y_n b$. 
We define the norm 
$$\norm{x}_{L_p(\M; \ell_\infty)} = \inf \setof{\norm{a}_{2p} \sup_{n \in \mathbb{N}}\norm{y_n}_\infty \norm{b}_{2p}},$$
where the infimum is taken over all such factorisations of $x_n$ in terms of $a$, $b$ and $y_n$. 
One can also prove in a straightforward manner that if $x=\left(x_n\right)_{n\in \mathbb{N}}$ is a positive sequence, that is, if $x_n \geq 0$ for all $n \in \mathbb{N}$, then $x \in L_p(\M; \ell_\infty)$ iff there exists $a \in L_p^+(\M)$ such that $x_n \leq a$ for all $n\in\mathbb{N}$. 
Furthermore, 
\begin{equation}\label{eq: charac for sup norm}
 \norm{x}_{L_p(\M; \ell_\infty)} = \inf \setof{\norm{a}_{p} \textrm{ } \vert \textrm{ } a \in L_p^+(\M) \textrm{ and } x_n\leq a \;, \forall n \in \mathbb{N}}.   
\end{equation} 
The standard convention is to denote $\norm{x}_{L_p(\M ; \ell_\infty)}$ as $\norm{\sup_n^+x_n}_p$. 
Now we state the following results from \cite{JX07} and \cite{Ye77} on maximal and individual ergodic theorems for integer actions on noncommutative $L_p$-spaces, which we will use to deduce the maximal and individual counterparts for amenable groups. 
For a map $T$ on $L_p(\M)$, we define the ergodic average for $T$ denoted by $M_n:L_p(\M) \to L_p(\M)$, $n\in\mathbb{N}$ to be $M_n(x) := \frac{1}{n}\sum_{j=0}^{n-1}T^j(x)$. 
Then, the maximal ergodic theorem can be stated as: 
\begin{prop}[Theorem 4.1, \cite{JX07}]\label{JungeXuMaximalIntegers}
     Let $T$ be a map on $L_p(\M)$, $p \in \left(1, \infty \right)$ that satisfies the conditions 
     (T1), (T2), and (T3). 
     For all $x \in L_p(\M)$,
    $$\norm{ \sup_n{}^+ M_n(x)}_p \leq C_p \norm{x}_p,$$ 
    with $C_p \leq \frac{C_0 p^2}{(p-1)^2}$, this being the optimal order of $C_p$ as $p \to 1$. 
\end{prop}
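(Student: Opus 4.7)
The plan is to follow a two-step strategy: reduce the ergodic maximal problem to a martingale maximal problem via a noncommutative Rota/Akcoglu-type dilation, and then invoke the noncommutative Doob maximal inequality, which will supply the claimed $p^{2}/(p-1)^{2}$ behaviour of the constant.

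\textbf{Step 1 (Dilation).} Given $T$ satisfying (T1)--(T3), I would construct a larger semifinite von Neumann algebra $(\N,\tilde\tau)$ containing $\M$ in a trace-preserving way, a trace-preserving $*$-automorphism $\sigma$ of $\N$, a normal trace-preserving conditional expectation $E:\N\to\M$, and a filtration of conditional expectations $(E_{n})_{n\ge 0}$ with $E_{0}=E$, such that
\[
T^{n}(x) \;=\; E\,\sigma^{n}(x),\qquad x\in\M,\ n\ge 0.
\]
The construction mimics the classical Rota dilation: one builds $\N$ as a suitable ``past--present--future'' algebra (an infinite tensor product / crossed product), on which $\sigma$ acts as a shift and $E$ projects onto the ``present.'' Positivity and trace preservation of $T$ are essential here, both for the existence of the $E_{n}$ as genuine conditional expectations and for the resulting map to extend contractively to all $L_{p}(\M)$.

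\textbf{Step 2 (Domination by a martingale).} With the dilation in place, the ergodic average rewrites as
\[
M_{n}(x) \;=\; \frac{1}{n}\sum_{j=0}^{n-1} E\,\sigma^{j}(x) \;=\; E\!\left(\frac{1}{n}\sum_{j=0}^{n-1}\sigma^{j}(x)\right).
\]
Following the Akcoglu strategy on the dilated space, the family $\{M_{n}(x)\}_{n}$ is then dominated, in the sense of \eqref{eq: charac for sup norm}, by the conditional expectations of a single element: one produces $y\in L_{p}^{+}(\N)$ with $\norm{y}_{p}\lesssim\norm{x}_{p}$ such that $M_{n}(x)\le E_{n}(y)$ for all $n$. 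The key algebraic input is that one can represent the Cesàro mean of the shifts $\sigma^{j}(x)$ as a (sub)martingale with respect to the $\sigma^{-n}$-filtration, up to a positive correction controllable in $L_{p}$-norm.

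\textbf{Step 3 (Noncommutative Doob).} At this point I would apply Junge's noncommutative Doob maximal inequality: for $p\in(1,\infty)$ and any trace-preserving filtration $(F_{n})$,
\[
\norm{\sup_{n}{}^{\!+} F_{n}(y)}_{p} \;\le\; c_{p}\,\norm{y}_{p}, \qquad c_{p} \;\le\; C_{0}\,\frac{p^{2}}{(p-1)^{2}},
\]
with the quadratic blow-up in $(p-1)^{-1}$ being sharp as $p\to 1$. Combined with Step~2, this yields the required estimate with the stated order of $C_{p}$.

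\textbf{Main obstacle.} The genuine difficulty is Step~1 together with the $L_{p}(\M;\ell_{\infty})$-level domination in Step~2: unlike the commutative Akcoglu theorem, there is no pointwise picture to exploit, and the dilation must be constructed so that positivity, normality, and trace-preservation are all simultaneously respected on the enlarged algebra. A further subtlety is the sharp constant: plain interpolation between the trivial $L_{\infty}$ bound and Yeadon's noncommutative weak-$(1,1)$ inequality only gives $C_{p}=O((p-1)^{-1})$, so the quadratic order must really come through the Doob route rather than Marcinkiewicz-type interpolation.
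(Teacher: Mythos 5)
This proposition is quoted by the paper directly from \cite{JX07} (Theorem 4.1 there) and not reproved, so the comparison is really against Junge--Xu's own argument. Measured against that, your proposal has a genuine gap at Step 1: a Rota-type dilation $T^{n}(x)=E\,\sigma^{n}(x)$ (or $T^{2n}=\hat E\circ E_{n}$ with a monotone filtration of conditional expectations) simply does not exist for a general map satisfying only (T1)--(T3). The hypotheses give positivity, not complete positivity, and even for unital completely positive trace-preserving maps the existence of such dilations fails in the noncommutative setting (this is the content of the Junge--Le Merdy counterexamples, and Junge--Xu themselves emphasise that their main theorem cannot be reduced to the martingale case by dilation except for special classes of maps, e.g.\ certain Markov maps on group von Neumann algebras). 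So the ``past--present--future'' construction you describe, which works classically because one can realise everything on a genuine product measure space, is not available here; Steps 2 and 3 therefore have nothing to stand on for a general $T$.

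The actual route in \cite{JX07} is the one you dismiss: they first dualise and upgrade Yeadon's weak $(1,1)$ maximal inequality (Proposition \ref{YeadonMaximalIntegers} in this paper), and then apply a new Marcinkiewicz-type interpolation theorem for the spaces $L_p(\M;\ell_\infty)$ (their Theorem 3.1), interpolating between weak $(1,1)$ and the trivial $L_\infty$ bound. Your claim that such interpolation ``only gives $C_{p}=O((p-1)^{-1})$'' has the situation backwards: the noncommutative maximal interpolation inherently produces $O((p-1)^{-2})$, which is worse than the classical order but is shown by Junge--Xu to be \emph{optimal} for this noncommutative ergodic maximal inequality --- that is precisely the point of the clause ``this being the optimal order of $C_p$ as $p\to1$'' in the statement. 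The quadratic blow-up is a feature of the interpolation, not something that needs to be recovered from Doob's inequality. If you want a proof along your lines, you must first restrict to a class of maps known to admit noncommutative Rota dilations, which is strictly smaller than the class covered by the proposition.
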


\begin{prop}[Theorem 1, \cite{Ye77}]\label{YeadonMaximalIntegers}
    Let $T$ be a map on $L_1(\M)$ that satisfies the conditions 
    (T1), (T2), and (T3). 
    Then, for all $x \in L_1(\M)$ and $\eps>0$, there exists a projection $e \in \M$ such that 
    \begin{align*}
        \norm{eM_n(x)e}_\infty < 4 \eps \text{ for any } n \in \mathbb{N}, \;
        &\textrm{ and } \tau(1_\M-e) < \frac{4}{\eps} \norm{x}_1.
    \end{align*}
\end{prop}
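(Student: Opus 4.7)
The plan is to prove Yeadon's weak-type $(1,1)$ maximal inequality by combining a noncommutative spectral cutoff with a telescoping trace estimate that exploits the recursion $nM_n(x)=x+(n-1)\,T(M_{n-1}(x))$ built into the definition of the Ces\`aro averages.

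First, I would decompose an arbitrary $x\in L_1(\M)$ as $x=(x_1-x_2)+i(x_3-x_4)$ with each $x_j\in L_1(\M)_+$ satisfying $\norm{x_j}_1\le \norm{x}_1$. Since $T$ is linear and positive by (T1) and (T2), and since the bound is a norm estimate on $eM_n(\cdot)e$, it suffices to prove the corresponding statement separately for a positive $y\in L_1(\M)_+$. The factor $4$ in the Proposition is precisely what is incurred by combining the four projections obtained from the four positive pieces (by intersecting them, the norm bound adds and the co-trace bound adds).

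For a positive $y$, I would build the projection $e$ by an inductive stopping-time construction. Set $e_0=1_\M$; given $e_{n-1}$, let $f_n$ be the spectral projection of the compressed operator $e_{n-1}M_n(y)e_{n-1}$ onto $(\eps,\infty)$, and define $e_n=e_{n-1}-f_n$. The sequence $(e_n)$ is decreasing, and by spectral theory $e_nM_n(y)e_n\le \eps\,e_n$. Taking $e=\bigwedge_n e_n$ as the strong-operator limit yields $\norm{eM_n(y)e}_\infty\le \eps$ uniformly in $n\in\bN$, which is the norm part of the conclusion.

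The main obstacle is the trace bound $\tau(1_\M-e)=\sum_n\tau(f_n)\le \eps^{-1}\norm{y}_1$. The naive spectral estimate $\tau(f_n)\le \eps^{-1}\tau\!\left(e_{n-1}M_n(y)e_{n-1}\right)\le \eps^{-1}\tau(y)$, which uses only (T3), is uniform in $n$ and diverges upon summation. It must therefore be replaced by one that only registers the \emph{new} mass created at step $n$. Writing $M_n(y)=\tfrac{1}{n}y+\tfrac{n-1}{n}T(M_{n-1}(y))$, using that $e_{n-1}$ has already compressed $M_{n-1}(y)$ below $\eps$ on its range, and invoking positivity (T2) together with trace decrease (T3) for $T$, one should obtain a telescoping inequality whose total sum is controlled by $\tau(y)=\norm{y}_1$. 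The technical subtlety, and what I anticipate to be the hard part, is that $T$ does not commute with the cutoff projections $e_{n-1}$, so the compressions must be handled on both sides via Kadison--Schwarz-type inequalities for positive maps; this is precisely the point where Yeadon's original argument departs from the commutative Hopf maximal lemma and where the noncommutativity of $\M$ genuinely enters.
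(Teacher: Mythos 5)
The paper offers no proof of this proposition at all: it is imported verbatim as Theorem 1 of Yeadon's paper \cite{Ye77}, so your attempt can only be measured against Yeadon's own argument. Your reduction to four positive pieces and the bookkeeping that yields the factor $4$ are fine, and the stopping-time construction does deliver the norm half of the conclusion: with $f_n=\chi_{(\eps,\infty)}\bigl(e_{n-1}M_n(y)e_{n-1}\bigr)$ and $e=\bigwedge_ne_n$ one gets $\norm{eM_n(y)e}_\infty\le\eps$, and since the $f_n$ are pairwise orthogonal, $\tau(1_\M-e)=\sum_n\tau(f_n)$. But the bound $\sum_n\tau(f_n)\le\eps^{-1}\norm{y}_1$ is the entire content of the theorem, and your proposal leaves it as a heuristic that does not go through.

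Two concrete obstructions. First, the plan to ``register only the new mass'' via $M_n(y)=\tfrac1ny+\tfrac{n-1}nT(M_{n-1}(y))$ must convert the inductive information $e_{n-1}M_{n-1}(y)e_{n-1}\le\eps\,e_{n-1}$ into control of $e_{n-1}T(M_{n-1}(y))e_{n-1}$; but $T$ is applied to the \emph{uncompressed} operator, and a positive operator is not dominated by its compression ($a\ge eae$ fails already for $a=\left(\begin{smallmatrix}1&1\\1&1\end{smallmatrix}\right)$, $e=\left(\begin{smallmatrix}1&0\\0&0\end{smallmatrix}\right)$). The best available substitute, $a\le 2\bigl(eae+(1-e)a(1-e)\bigr)$, produces the remainder $(1-e_{n-1})M_{n-1}(y)(1-e_{n-1})$, whose trace is of order $\tau(y)$ at \emph{every} step; nothing telescopes, and multiplicative constants accumulate. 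Second, already in the commutative case your construction removes exactly the set $\setof{\sup_nM_n(y)>\eps}$ (the first $n$ at which the average exceeds $\eps$ places a point in $f_n$), so the estimate you are trying to telescope \emph{is} the weak $(1,1)$ maximal inequality itself; classically it is proved via Hopf's maximal ergodic lemma, not by a telescoping identity for the increments. Yeadon's actual proof is a noncommutative version of Garsia's proof of Hopf's lemma: for positive $x$ and a level $\lambda$ one sets $a_0=0$ and $a_n=\bigl(T(a_{n-1})+x-\lambda\bigr)^+$, a surrogate for the running maximum $\max\bigl(0,S_1(x-\lambda),\dots,S_n(x-\lambda)\bigr)$ with $S_k(z)=\sum_{j<k}T^j(z)$; the projection is built from the spectral projections of $T(a_{n-1})+x-\lambda$ onto $(-\infty,0]$, the inequality $eS_n(x)e\le n\lambda e$ follows from $a_{n-1}\ge S_k(x-\lambda)$ for $k<n$ together with positivity of $T$, and the trace bound follows from $\tau(h^+)\ge\tau(h)$ combined with $\tau\circ T\le\tau$. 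Without importing that construction (or an equivalent one), the proof is not complete.
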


We now look at the following pair of definitions:
\begin{defn}\label{StrongAndWeakTypes}
    Let $p \in [1,\infty]$, and $\left(S_i\right)_{i \in I}$ be a family of maps from $L_p^+(\M)$ to $L_0 (\M)$ where the latter denotes the space of all positive measurable operators. 
    Then, 
    \begin{enumerate}
        \item[(i)] for $p\in[1,\infty)$, we say that the family $\left(S_i\right)_{i \in I}$ is of weak type $(p,p)$ with constant $C$ if there exists a constant $C>0$ such that for all $x \in L_p^+(\M)$ and $\lambda>0$, there exists a projection $e \in \M$ satisfying
        \begin{align*}
            \tau(1_\M - e) \leq \frac{C^p}{\lambda^p}\norm{x}_p^p\; &\textrm{ and } \;
            eS_i(x)e \leq \lambda e, \;\; \forall i \in I;
        \end{align*}

        \item[(ii)] for $p\in[1,\infty]$, we say that the family $\left(S_i\right)_{i \in I}$ is of strong type $(p,p)$ with constant $C$ if there exists a constant $C>0$
        \begin{align*}
            \norm{\sup_{i\in I}{}^+S_i(x)}_p \leq C \norm{x}_p, 
           \;\; \forall x \in L_p(\M).
        \end{align*}
    \end{enumerate}
\end{defn}

Thus, Proposition \ref{JungeXuMaximalIntegers} can be restated by saying that the family of operators $(M_n)_{n\in \mathbb{N}}$ is of strong type $(p,p)$ with constant $C_p$, and Proposition \ref{YeadonMaximalIntegers} is equivalent to saying that $(M_n)_{n\in \mathbb{N}}$ is of weak type $(1,1)$ with constant $4$.

For any $p \in (1, \infty)$, the map $T$ imposes a canonical decomposition $L_p(\M)= \F_p^T \oplus (\F_p^T)^\perp$, where $\F_p^T = \setof{ x \in L_p(\M) \mid T(x)=x }$, $(\F_p^T)^\perp = \overline{\left(1-T\right)L_p(\M)}^{\norm{\cdot}_p}$. 
For conjugate indices $p,q$ (i.e. $\frac{1}{p} + \frac{1}{q} = 1$), we have $(L_p(\M))^*=L_q(\M)$, and consequently $(\F_p^T)^*=\F_q^T$. Denote the positive contractive projection from $L_p(\M)$ to $\F_p^T$ by $\mathcal{P}$.
From the theorems above, the following individual ergodic theorem may be obtained as a corollary:
\begin{cor}[Theorem 1, \cite{Ye77} for $p=1$; Corollary 6.4 and Remark 6.5, \cite{JX07} for $p \in (1,\infty)$ ]\label{JungeXuIntegers}
     Let $T$ be a map on $L_p(\M)$, $p \in \left(1, \infty \right)$ that satisfies the conditions 
     (T1), (T2), and (T3), 
     then $(M_n(x))_n$ converges to $\mathcal{P}(x)$ b.a.u. for $p \in [1, \infty)$. Furthermore, for $p \in [2, \infty)$, the convergence is a.u.
\end{cor}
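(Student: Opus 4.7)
The plan is to deduce this individual ergodic theorem from the maximal inequalities already established in Propositions~\ref{JungeXuMaximalIntegers} and~\ref{YeadonMaximalIntegers} via a noncommutative Banach principle, combined with a direct verification of convergence on a dense subspace where it is essentially trivial.

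First I would identify the dense subspace $D := \F_p^T + (1-T)L_p(\M)$. Its density is the mean ergodic decomposition $L_p(\M) = \F_p^T \oplus \overline{(1-T)L_p(\M)}^{\norm{\cdot}_p}$ recalled just before the corollary: for $p \in (1,\infty)$ this follows from reflexivity together with the positive contractivity of $T$, and for $p = 1$ it can be obtained by duality from the $w^\star$-decomposition on $\M = L_\infty(\M)$ using $\tau \circ T \leq \tau$. On $\F_p^T$ one has $M_n(x) = x = \mathcal{P}(x)$ trivially. On the further dense subclass $(1-T)(\M \cap L_p(\M))$, write $x = (1-T)y$ with $y \in \M \cap L_p(\M)$; then $M_n(x) = \tfrac{1}{n}(y - T^n y)$ satisfies $\norm{M_n(x)}_\infty \leq \tfrac{2}{n}\norm{y}_\infty \to 0$, giving a.u.\ (hence b.a.u.) convergence to $\mathcal{P}(x) = 0$ with the trivial choice $e = 1_\M$.

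Second I would run the Banach principle to transfer convergence from $D$ to all of $L_p(\M)$. Weak type $(p,p)$ of $(M_n)$ is provided directly by Proposition~\ref{YeadonMaximalIntegers} when $p = 1$; for $p \in (1, \infty)$ it follows from the strong type of Proposition~\ref{JungeXuMaximalIntegers} by choosing the positive majorant $a \in L_p^+(\M)$ afforded by \eqref{eq: charac for sup norm}, setting $e := \chi_{[0, \lambda]}(a)$, and applying the noncommutative Chebyshev inequality. Given $x \in L_p(\M)$ and $\varepsilon > 0$, I would pick $x_k \in D$ with $\norm{x - x_k}_p$ decaying geometrically, apply weak type to each $x - x_k$ with a compatible threshold $\lambda_k$ to produce projections $e_k$ with $\sum_k \tau(1 - e_k) < \varepsilon$, intersect with the projections coming from the dense convergence of $(M_n(x_k))$, and verify by the triangle inequality that $\norm{e\bigl(M_n(x) - \mathcal{P}(x)\bigr) e}_\infty \to 0$ on the resulting common projection $e$.

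Third, for the a.u.\ conclusion when $p \in [2, \infty)$, I would replace the two-sided inequality by a column-square version. Since $T = \int_G \alpha_g\, d\omega(g)$ is a convex combination of $\ast$-automorphisms, it is unital completely positive; Kadison--Schwarz for each $T^j$ combined with operator convexity of $a \mapsto a^\ast a$ yields
\begin{equation*}
M_n(x)^\ast M_n(x) \;\leq\; \frac{1}{n}\sum_{j=0}^{n-1} T^j(x)^\ast T^j(x) \;\leq\; M_n(x^\ast x).
\end{equation*}
Applying Proposition~\ref{JungeXuMaximalIntegers} at exponent $p/2 \geq 1$ to the positive element $x^\ast x$ then gives $\norm{\sup_n{}^+ M_n(x)^\ast M_n(x)}_{p/2} \leq C_{p/2}\, \norm{x}_p^2$, and a one-sided Chebyshev argument produces, for any $\varepsilon > 0$, a projection $e$ with $\tau(1 - e) < \varepsilon$ controlling $\norm{M_n(x) e}_\infty$ uniformly in $n$; inserting this one-sided control in the Banach principle step upgrades b.a.u.\ to a.u. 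The main obstacle I expect is the bookkeeping in the Banach principle step itself: calibrating the approximation rate $\norm{x - x_k}_p$, the weak-type thresholds $\lambda_k$, and the projection defects $\tau(1 - e_k)$ so that the countable intersection of the $e_k$ still has trace close to $\tau(1_\M)$ while simultaneously forcing $\limsup_n \norm{e\bigl(M_n x - \mathcal{P} x\bigr) e}_\infty$ to be arbitrarily small.
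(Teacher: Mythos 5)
This statement is quoted in the paper without proof (it is attributed to Yeadon for $p=1$ and to Junge--Xu for $p\in(1,\infty)$), so the comparison is against those cited arguments, which do follow your general template: convergence on a dense subspace plus a maximal inequality plus a Banach principle. For $p\in(1,\infty)$ your plan is sound: reflexivity gives the mean ergodic decomposition, $M_n\bigl((1-T)y\bigr)=\tfrac1n(y-T^ny)$ converges uniformly for $y\in\M\cap L_p(\M)$, and the weak type $(p,p)$ extracted from Proposition~\ref{JungeXuMaximalIntegers} via the spectral projection $e=\chi_{[0,\lambda]}(a)$ and Chebyshev closes the argument.

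The genuine gap is at $p=1$. The subspace $D=\F_1^T+(1-T)L_1(\M)$ is \emph{not} dense in $L_1(\M)$ when $\tau$ is merely semifinite, and the proposed ``duality from the $w^\star$-decomposition'' cannot repair this because $L_1$ is not reflexive and the mean ergodic theorem fails there. Concretely, take $\M=\ell_\infty(\Z)$ with the counting trace and $T$ the shift: conditions (T1)--(T3) hold, $\F_1^T=\{0\}$, and $\overline{(1-T)\ell_1(\Z)}=\{f:\sum_n f(n)=0\}$ is a proper closed hyperplane, so $\delta_0\notin\overline{D}$ even though $M_n(\delta_0)\to 0$ b.a.u. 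Yeadon's actual proof avoids this by proving convergence on the dense set $L_1(\M)\cap L_2(\M)$, using the $L_2$ mean ergodic decomposition together with a maximal control at exponent $2$ to absorb the $L_2$-small remainder, and only then transferring to $L_1$ via the weak type $(1,1)$ inequality; your plan is missing this extra layer. A secondary issue: in the a.u.\ step you invoke $T=\int_G\alpha_g\,d\omega(g)$ to get complete positivity, but the corollary's hypotheses are only (T1)--(T3) with no group action present; the inequality $M_n(x)^*M_n(x)\le M_n(x^*x)$ requires $2$-positivity in general. This is repairable by reducing to self-adjoint $x$ (a.u.\ convergence passes to real and imaginary parts), where the Kadison--Schwarz inequality $T(a)^2\le T(a^2)$ holds for any positive contraction, but as written the step does not follow from the stated hypotheses.
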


\subsection{Unimodular Amenable Groups} 

The F{\o}lner condition was originally formulated for discrete groups in \cite{Fo55}. An equivalent condition was shown to hold for locally compact second countable groups by Emerson and Greenleaf in \cite{EG67}: 

\begin{prop}[Theorem 3.2.1, \cite{EG67}]\label{Folner sequences locally compact}
    Let $G$ be a locally compact and second countable group. Then $G$ is amenable iff there exists a sequence of compact subsets $\{ F_n \}_{n \in \mathbb{N}}$ of $G$ satisfying $F_n \subset F_{n+1}$ and $\bigcup_{n \in\mathbb{N}}F_n = G$ such that for any compact $K \subset G$, $$\lim_{n\rightarrow \infty} \frac{\lambda(KF_n \triangle F_n)}{\lambda(F_n)} = 0.$$
\end{prop}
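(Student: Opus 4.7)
The plan is to prove the two implications separately. The easy direction (Følner sequence implies amenability) constructs a left-invariant mean on $L_\infty(G)$; the harder direction constructs approximately invariant sets from amenability, which I would approach via Reiter's condition together with a layer-cake decomposition.

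For the reverse direction, given a Følner sequence $\{F_n\}_{n\in\mathbb{N}}$, I would define states $M_n\in L_\infty(G)^*$ by
$$M_n(\phi):=\frac{1}{\lambda(F_n)}\int_{F_n}\phi(g)\,d\lambda(g).$$
By Banach-Alaoglu the sequence $(M_n)$ has a weak-$*$ accumulation point $M$, and the Følner property gives the uniform estimate
$$|M_n(L_g\phi)-M_n(\phi)|\leq 2\|\phi\|_\infty\,\frac{\lambda(gF_n\triangle F_n)}{\lambda(F_n)}\longrightarrow 0$$
for each $g\in G$ (with $L_g$ the left-translation operator), so the limit $M$ is a left-invariant mean on $L_\infty(G)$.

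For the forward direction, I would invoke the Reiter condition, a classical equivalent of amenability: for every compact $K\subset G$ and $\varepsilon>0$ there exists a nonnegative $f\in L_1(G,\lambda)$ with $\|f\|_1=1$ and $\sup_{g\in K}\|L_g f - f\|_1<\varepsilon$. To pass from an approximately invariant function to an approximately invariant set, the natural tool is the layer-cake formula $f=\int_0^\infty \chi_{\{f>t\}}\,dt$ combined with the identities
$$\|L_g f - f\|_1=\int_0^\infty \lambda\bigl(g\{f>t\}\,\triangle\,\{f>t\}\bigr)\,dt, \qquad \|f\|_1=\int_0^\infty \lambda(\{f>t\})\,dt.$$
An averaging argument, applied with $g$ ranging over a finite net in $K$, produces a level $t_0>0$ such that $F:=\{f>t_0\}$ has positive finite measure and satisfies $\lambda(gF\triangle F)<\varepsilon'\lambda(F)$ at each net point; the estimate then extends to all of $K$ by continuity of translation on $L_1$.

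Finally, to assemble a single sequence satisfying the nesting and exhaustion conditions, I would fix an exhaustion $K_1\subset K_2\subset\cdots$ of $G$ by compacts (available since $G$ is $\sigma$-compact) and, for each $n$, use the construction above to pick $F_n'$ compact with $\lambda(K_n F_n'\triangle F_n')/\lambda(F_n')<1/n$. I would then inductively set $F_n:=F_{n-1}\cup F_n'\cup K_n$, noting that enlarging a Følner set by a controlled mass preserves the Følner property up to passing to a subsequence, and this simultaneously yields $F_n\subset F_{n+1}$ and $\bigcup_n F_n=G$. The main obstacle in this plan is the layer-cake step: one must choose a \emph{single} threshold $t_0$ which controls the symmetric difference uniformly over all $g\in K$, and this is what the averaged integral estimate and the equicontinuity of translation on $L_1$-functions together deliver.
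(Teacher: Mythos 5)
The paper does not actually prove this proposition; it is imported verbatim from Emerson--Greenleaf \cite{EG67}, so there is no in-paper argument to compare yours against. Judged on its own terms, your reverse direction (weak-$*$ accumulation point of the averaging states $M_n$ is a left-invariant mean) is correct and standard, and the final exhaustion/nesting step is repairable provided you add the observation that Følner sets of arbitrarily large measure exist when $G$ is noncompact (the compact case being trivial).

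The forward direction, however, has a genuine gap. The condition you must produce involves the \emph{product set} $KF_n=\bigcup_{g\in K}gF_n$, i.e. $\lambda(KF_n\triangle F_n)\le\varepsilon\lambda(F_n)$, whereas your net-plus-layer-cake argument only delivers the pointwise condition $\sup_{g\in K}\lambda(gF_n\triangle F_n)\le\varepsilon\lambda(F_n)$. For discrete groups these coincide via the union bound $\lambda(KF\setminus F)\le\sum_{g\in K}\lambda(gF\setminus F)$, but for non-discrete $K$ the union is uncountable and no such bound exists; in layer-cake terms, controlling $\int_0^\infty\lambda(K\{f>t\}\setminus\{f>t\})\,dt$ amounts to bounding $\lVert\sup_{g\in K}L_gf-f\rVert_1$, a maximal-function quantity that is \emph{not} dominated by $\sup_{g\in K}\lVert L_gf-f\rVert_1$. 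Upgrading the weak (pointwise) Følner condition to this strong form is precisely the nontrivial content of Emerson--Greenleaf's theorem and is missing from your plan. There is also a secondary circularity in the step ``extend from the net to all of $K$ by continuity of translation'': the modulus of continuity of $g\mapsto L_g\chi_{E}$ depends on the level set $E=\{f>t_0\}$, which is only constructed \emph{after} the net has been chosen; the uniform continuity you may legitimately invoke is that of $g\mapsto L_gf$ for the fixed function $f$, not of its level sets. The distinction matters for this paper: the strong (product-set) form is what is used downstream, e.g. to select $F_{n_k}$ with $\lambda(E_k\setminus F_{n_k})<\varepsilon_k\lambda(F_{n_k})$ for the product sets $E_k$ in the proof of Theorem 3.1.
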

The limit above is additionally equivalent to $\lim_{n\rightarrow \infty} \frac{\lambda(KF_n \backslash F_n)}{\lambda(F_n)} = 0$, or $\lim_{n\rightarrow \infty} \frac{\lambda(KF_n \cap F_n)}{\lambda(F_n)} = 1$, also equivalent to $\lim_{n\rightarrow \infty} \frac{\lambda(KF_n)}{\lambda(F_n)} = 1$. We will freely use these characterisations throughout the paper and refer the reader to \cite{Pa88} for further details. 
Every locally compact second countable amenable group has a left-F{\o}lner sequence with respect to the left Haar measure $\lambda$. It is direct to see for every left-F{\o}lner sequence $\{F_n\}_{n \in \mathbb{N}}$, the sequence $\{F_n^{-1}\}_{n \in \mathbb{N}}$  is right-F{\o}lner sequence with respect to the right Haar measure $\tilde{\lambda}$.  
One may see by a comment noted in \cite{CW22} that if $G$ is nonunimodular, then there does not exist a right-F{\o}lner sequence with respect to the left Haar measure $\lambda$: If $\Delta(g)>1$, then for any $F\subset G$, $\lambda(Fg\backslash F) \geq \lambda(Fg) - \lambda( F) = \left( \Delta(g) - 1 \right) \lambda(F)$, and $\frac{\lambda(Fg\backslash F)}{\lambda(F)} \geq \Delta(g) - 1$, so the F{\o}lner invariance criterion is not satisfied from the right with respect to the left Haar measure. However, if $G$ is unimodular, then one may work around this. 
\begin{prop}[Proposition 2, \cite{OW87}]\label{biinvariant Folner sets existence}
    Let $G$ be a locally compact second countable unimodular amenable group. Then, there exists a sequence of compact subsets $\{ F_n\}_{n \in \mathbb{N}}$ of $G$ such that for any compact $K \subset G$, $$\lim _{n\rightarrow \infty}  \frac{\lambda(KF_nK \backslash F_n)}{\lambda(F_n)} = 0$$ 
    which is equivalent to $\lim_{n\rightarrow \infty}  \frac{\lambda(K F_n K \cap F_n)}{\lambda(F_n)} = 1$, also equivalent to $\lim _{n\rightarrow \infty} \frac{\lambda(KF_nK)}{\lambda(F_n)} = 1$.
\end{prop}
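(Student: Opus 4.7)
The plan is to bootstrap a two-sided F{\o}lner sequence from the one-sided one furnished by Proposition \ref{Folner sequences locally compact}, using unimodularity as the essential algebraic input. I would proceed in three stages: first produce a symmetric two-sided Reiter sequence of positive $L_1$-functions, then convert this to sets via a layer-cake argument, and finally diagonalise over a countable exhaustion of $G$.

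By Proposition \ref{Folner sequences locally compact}, $G$ admits a left F{\o}lner sequence $\{E_n\}$, so the normalised indicators $f_n := \chi_{E_n}/\lambda(E_n) \in L_1(G,\lambda)$ satisfy $\|L_g f_n - f_n\|_1 \to 0$ uniformly on compact sets, where $L_g$ denotes left translation. Define $\check{f}_n(g):= f_n(g^{-1})$ and $h_n := f_n * \check{f}_n$. Unimodularity enters in two ways: it gives $\|\check{f}_n\|_1 = \|f_n\|_1 = 1$ (so $\|h_n\|_1 = 1$), and a direct change of variables using the inversion-invariance of $\lambda$ shows $h_n(x) = h_n(x^{-1})$. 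Young's inequality then yields
\[
\|L_g h_n - h_n\|_1 \;\le\; \|L_g f_n - f_n\|_1 \, \|\check{f}_n\|_1 \;\to\; 0
\]
uniformly on compacts; combined with the symmetry of $h_n$ and the identity $\int F(x)\, d\lambda(x) = \int F(x^{-1})\, d\lambda(x)$ (valid by unimodularity), this promotes to the two-sided estimate $\|R_g h_n - h_n\|_1 = \|L_{g^{-1}} h_n - h_n\|_1 \to 0$ where $R_g$ denotes right translation. Thus $\{h_n\}$ is a two-sided Reiter sequence.

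To pass back from functions to sets, fix a symmetric compact neighbourhood $K$ of the identity and $\varepsilon > 0$, and choose $n$ large so that $\|L_g h_n - h_n\|_1 < \varepsilon$ and $\|R_g h_n - h_n\|_1 < \varepsilon$ for all $g \in K$. For each fixed $g$, the layer-cake identity
\[
\|L_g h_n - h_n\|_1 = \int_0^\infty \lambda\bigl(g\{h_n > t\} \triangle \{h_n > t\}\bigr)\, dt,
\]
together with $\int_0^\infty \lambda(\{h_n > t\})\, dt = \|h_n\|_1 = 1$, permits a Markov-type pigeonhole on the level $t$. Averaging also over $g \in K$, using the strong continuity of $g \mapsto L_g h_n$ to pre-discretise $K$ to a finite subset controlling the sup via uniform continuity, produces a single level $t_0 > 0$ for which $F := \{h_n > t_0\}$ satisfies $\lambda(KF \triangle F) < C\varepsilon\,\lambda(F)$ and $\lambda(FK \triangle F) < C\varepsilon\,\lambda(F)$ simultaneously, whence $\lambda(KFK \setminus F) < 2C\varepsilon\, \lambda(F)$. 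By second countability, $G$ has an exhausting increasing sequence of symmetric compact neighbourhoods $\{K_n\}$; applying the construction with $(K_n, 1/n)$ yields the desired two-sided F{\o}lner sequence $\{F_n\}$, since every compact $K \subset G$ is eventually contained in some $K_n$.

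The principal obstacle is the extraction step in the second stage: passing from the pointwise-in-$g$ control of $\|L_g h_n - h_n\|_1$ and $\|R_g h_n - h_n\|_1$ to a single level set that is uniformly two-sided F{\o}lner for every $g \in K$ at once. This requires the uniform $L_1$-continuity of translation and a careful discretisation argument, and it is precisely here that the symmetry $h_n(x)=h_n(x^{-1})$ and the inversion-invariance of $\lambda$ — both consequences of unimodularity — are invoked crucially, making the approach specific to the unimodular setting.
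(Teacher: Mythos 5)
The paper does not prove this proposition: it is imported verbatim from Ornstein--Weiss \cite{OW87}, so there is no internal argument to compare yours against. Your Stage 1 is correct and complete, and is the classical route: with $h_n=f_n*\check f_n$, unimodularity gives $\|\check f_n\|_1=\|f_n\|_1$ and the inversion-invariance of $\lambda$, whence $\{h_n\}$ is a symmetric two-sided Reiter sequence. The diagonalisation in Stage 3 is also fine.

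Stage 2, however, contains two genuine gaps. First, the proposition asserts the \emph{product-set} estimate $\lambda(KF_nK\setminus F_n)=\lambda\bigl(\bigcup_{g,g'\in K}gF_ng'\setminus F_n\bigr)=o(\lambda(F_n))$; this uncountable union is not controlled by $\sup_{g,g'\in K}\lambda(gF_ng'\setminus F_n)$, and pointwise bounds on $\|L_gh_n-h_n\|_1$ and $\|R_gh_n-h_n\|_1$ for each $g\in K$ can at best yield the latter, so your concluding line ``$\lambda(KFK\setminus F)<2C\varepsilon\lambda(F)$'' does not follow from the two one-sided bounds you state. Second, even for the pointwise version, the proposed extraction mechanism fails: once a level $t_0$ is fixed, the map $h\mapsto\{h>t_0\}$ is not $L_1$-continuous, so controlling $\|L_gh_n-L_{g_i}h_n\|_1$ for $g$ near a net point $g_i$ gives no control on $\lambda(gF_{t_0}\triangle g_iF_{t_0})$; discretising $K$ therefore does not produce one level good for all $g\in K$ simultaneously. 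Both gaps can be closed at once by running the layer-cake argument against the product set directly: since $f_n$ is a normalised indicator, one has $\sup_{g,g'\in K}L_gR_{g'^{-1}}h_n\leq \frac{\chi_{KE_n}}{\lambda(E_n)}*\frac{\chi_{(KE_n)^{-1}}}{\lambda(E_n)}$ and $KF_tK=\{\sup_{g,g'\in K}L_gR_{g'^{-1}}h_n>t\}$, so that $\int_0^\infty\lambda(KF_tK\setminus F_t)\,dt\leq\|\sup_{g,g'\in K}L_gR_{g'^{-1}}h_n-h_n\|_1\leq \frac{\lambda(KE_n\setminus E_n)}{\lambda(E_n)}\bigl(1+\frac{\lambda(KE_n)}{\lambda(E_n)}\bigr)$, using unimodularity and the product-set form of the left F{\o}lner condition supplied by Proposition \ref{Folner sequences locally compact}; comparing with $\int_0^\infty\lambda(F_t)\,dt=1$ then produces a single level $t_0$ with $\lambda(KF_{t_0}K\setminus F_{t_0})$ small relative to $\lambda(F_{t_0})$. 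A minor further point: the level sets are only relatively compact (contained in $E_nE_n^{-1}$), so an inner-regularity step is needed to replace them by compact sets as the statement requires.
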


\section{The ergodic average dominance}

In this section, we develop the proof of our ergodic average dominance for unimodular amenable group acting on semifinite von Neumann algebras, formulated in Theorem \ref{action dominance}.
Unless explicitly stated otherwise, throughout this section, $G$ denotes a locally compact second countable unimodular amenable group with Haar measure $\lambda$, $\M$ is a semifinite von Neumann algebra with a s.f.n. trace $\tau$, and $\alpha: G \curvearrowright \left(\M, \tau \right)$ is a {$w^\star$-continuous} trace-preserving action. By this we mean that for every $g \in G$, $\alpha_g$ is an automorphism on $\M$ such that  $\alpha_g \circ \tau = \tau$,
and that for every $x \in \M$, the map $g \mapsto \alpha_g(x)$ is continuous with respect to the $w^\star$-topology of $\M$. It then follows that for every $x \in L_p(\M)$ with $p \in [1, \infty)$, the map $g \mapsto \alpha_g(x)$ is weakly measurable, and as shown following Lemma 1.1 of \cite{JX07}, such an action extends to an action on $L_p(\M)$ by isometries. 

\subsection{Dominating the Ergodic Averages}
The following theorem is the main result of our paper: 
\begin{thm}\label{action dominance}
    For every two-sided F{\o}lner sequence of $G$, there exists a subsequence 
    $\{F_n\}_{n \in \mathbb{N}}$, a constant $C > 0$, a strictly increasing function $N:\mathbb{N} \to \mathbb{N}$, and 
    a positive linear map $T$ satisfying conditions 
    (T1), (T2), and (T3) 
    such that for any $x$ in $L_p^+(\M)$, {$p \in [1, \infty]$} and 
    large enough $n$,
    \begin{align}\label{Ergodic Operator Inequality}
    \frac{1}{\lambda{(F_n)}} \int_{F_n} \alpha_g(x) d\lambda(g) &\leq C \frac{1}{N(n)}\sum_{j=0}^{N(n)-1} T^j(x).\tag{AD}
    \end{align} 
    Moreover, $T$ can be realised as a Markov operator $T(x)= \int_G \alpha_g(x) d\omega(g)$ for some probability measure $\omega$ on $G$
    absolutely continuous  
    with respect to $\lambda$, i.e.
    $\omega \in L_1^+(G, \lambda)$, and $\omega$ depends only on $\{ F_n \}_{n \in \mathbb{N}}$.
\end{thm}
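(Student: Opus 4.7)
My strategy is to reduce the theorem to a pointwise density comparison on $G$ itself, and then integrate that comparison against the action $\alpha$ to produce the operator inequality. Concretely, I would first establish the measure-comparison result announced in the introduction: given a two-sided F{\o}lner sequence of $G$, extract a subsequence (still denoted $\{F_n\}_{n\in\mathbb{N}}$), construct a nondegenerate probability measure $\omega \in L_1^+(G,\lambda)$, and produce $C>0$ together with a strictly increasing $N:\mathbb{N}\to\mathbb{N}$, such that
\[
\frac{\chi_{F_n}(g)}{\lambda(F_n)} \;\leq\; C \cdot \frac{1}{N(n)}\sum_{j=0}^{N(n)-1}\frac{d\omega^{(j)}}{d\lambda}(g)
\]
holds $\lambda$-a.e.\ in $g$ for all large $n$. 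Once this density inequality is in hand, the operator inequality (AD) follows by multiplying by $\alpha_g(x)\geq 0$ for $x\in L_p^+(\M)$ and integrating with respect to $d\lambda(g)$: Tonelli applied termwise on the finite right-hand sum, combined with the identity $T^j(x)=\int_G\alpha_g(x)\,d\omega^{(j)}(g)$ noted right after \eqref{T}, produces exactly (AD) with $T$ the Markov operator associated to $\omega$. Checking (T1)–(T3) for this $T$ is then essentially automatic: (T2) follows from positivity of $\omega$ and each $\alpha_g$; (T1) from $\omega(G)=1$ together with $\|\alpha_g(x)\|\leq\|x\|$; and (T3), in fact trace-preservation, from $\tau\circ\alpha_g=\tau$ and $\omega(G)=1$.

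The substantive step is therefore the density comparison. Following the Kaimanovich–Vershik philosophy, I would build $\omega$ as a convex combination
\[
\omega \;=\; \sum_{k=1}^\infty c_k\, \frac{\chi_{\widetilde F_{n_k}}}{\lambda(\widetilde F_{n_k})},
\]
along a sparsely-chosen sub-subsequence, where each $\widetilde F_{n_k}$ is a symmetrised enlargement of $F_{n_k}$ (for instance $F_{n_k}\cup F_{n_k}^{-1}$) so that the supports cover and generate $G$, guaranteeing that $\omega$ is absolutely continuous with respect to $\lambda$ and nondegenerate. Unimodularity is crucial: Proposition \ref{biinvariant Folner sets existence} produces two-sided F{\o}lner sets, which for our purposes means that $\chi_{\widetilde F_{n_k}} * \chi_{\widetilde F_{n_k}}$ concentrates on a translate of $\widetilde F_{n_k}$ of mass close to $\lambda(\widetilde F_{n_k})^2$, and iterating this estimate lets one control the Radon–Nikodym derivative of $\omega^{(j)}$ along the subsequence. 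By thinning $\{n_k\}$ and choosing $\{c_k\}$ sufficiently fast-decaying, while letting the Cesaro length $N(n)$ match the scale of $F_n$, the dominant contribution to $\frac{1}{N(n)}\sum_{j}d\omega^{(j)}/d\lambda$ on $F_n$ should itself be a positive constant multiple of $\chi_{F_n}/\lambda(F_n)$, yielding the required domination.

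The main obstacle is precisely this simultaneous calibration: a \emph{single} $\omega$, together with functions $N(n)$ and constant $C$, must produce the domination for \emph{every} large $n$, not merely for each $n$ separately. Tuning the weights $c_k$, the rate at which the subsequence is extracted, and $N(n)$ so that they all fit together is the combinatorial heart of the argument. Unimodularity enters essentially at this stage, since the comment preceding Proposition \ref{biinvariant Folner sets existence} shows that without it the right-translate F{\o}lner conditions that underpin every convolution estimate here would fail outright; this is also what forces us to work on a subsequence, as the original F{\o}lner sequence need not satisfy the two-sided Ornstein–Weiss condition.
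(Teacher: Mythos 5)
Your reduction of \eqref{Ergodic Operator Inequality} to the pointwise density comparison $\frac{\chi_{F_n}}{\lambda(F_n)} \leq \frac{C}{N(n)}\sum_{j=0}^{N(n)-1}\frac{d\omega^{(j)}}{d\lambda}$, and your verification of (T1)--(T3) for the associated Markov operator, are exactly what the paper does and are correct. The gap is in the construction of $\omega$, which is the substantive content of the theorem. You propose to take $\omega = \sum_k c_k \chi_{\widetilde F_{n_k}}/\lambda(\widetilde F_{n_k})$ with $\widetilde F_{n_k}$ merely a symmetrised copy of $F_{n_k}$, and to control $\omega^{(j)}$ by iterating a concentration estimate for $\chi_{\widetilde F}*\chi_{\widetilde F}$. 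This does not work: for groups of exponential growth the convolution powers of a uniform measure on a F{\o}lner set spread out (this is precisely why the Gaussian lower bounds available in the polynomial-growth case fail), and the quantity you would actually need --- that the \emph{bilateral dynamical interior} $\iota(K,K,\widetilde F_{n_k}) = \{g : KgK \subset \widetilde F_{n_k}\}$ occupies most of $\widetilde F_{n_k}$ when $K$ is a product of up to $N(k)-2$ copies of the earlier support sets --- is not a consequence of the two-sided F{\o}lner property, since $\lambda(\widetilde F\setminus\iota(K,K,\widetilde F))$ is an uncountable union of sets of the form $\widetilde F\setminus k_1^{-1}\widetilde F k_2^{-1}$ and is not controlled by $\lambda(K\widetilde F K\setminus \widetilde F)$.

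The missing idea is to build the supports of $\omega$ \emph{recursively as enlargements} of the F{\o}lner sets rather than as the F{\o}lner sets themselves: set $E_1=F_1$ and $E_n=\bigl(E_{n-1}^{N(n)-2}\bigr)^{-1}F_n\bigl(E_{n-1}^{N(n)-2}\bigr)^{-1}$, and take $\omega=\sum_n t_n\chi_{E_n}/\lambda(E_n)$. By construction $F_n$ lies inside the bilateral dynamical interior of $E_n$ with respect to $\bigl(E_{n-1}^{N(n)-2}\bigr)^{-1}$ on both sides, so every term of the multinomial expansion of $\omega^{(j)}$ ($j\le N(n)$) in which the index $n$ occurs exactly once and all other indices are $<n$ contributes the full value $t_n(t_1+\dots+t_{n-1})^{j-1}/\lambda(E_n)$ at every point of $F_n$ (this is the convolution-absorption Corollary \ref{convolution absorption for n}, and it is here, not in your ``right-translate'' remark, that unimodularity is used: one must absorb uniform measures from both sides of $\chi_{E_n}$). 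Summing the $j$ positions over $j\le N(n)$ with the calibration $t_n=2^{-n}$, $N(n)=2^n$ yields a lower bound proportional to $N(n)\lambda(F_n)/\lambda(E_n)\cdot\chi_{F_n}$. The two-sided F{\o}lner property then enters only at the last step, to choose the subsequence so that $\lambda(E_n)/\lambda(F_n)\to 1$; this is possible because at stage $n$ the thickening set $\bigl(E_{n-1}^{N(n)-2}\bigr)^{-1}$ is a fixed compact set determined by the previous stages. Your proposal correctly names the calibration problem but supplies neither the sets $E_n$ nor the absorption mechanism that resolves it.
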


The proof of the theorem above will be given in the next subsection. Before that, we first discuss a few preparatory results.

 The existence of two-sided F{\o}lner sequences of $G$ is guaranteed by Proposition \ref{biinvariant Folner sets existence}. 
    Without loss of generality, we may also assume our F{\o}lner sets to be symmetric 
    and containing the group identity (i.e. $F_n = F_n^{-1}$ and $e \in F_n$ for all $n\in \bN$).
    This is because, we may define $F'_n = F_n \cup F_n^{-1} \cup \{ e \}$. 
    Since $G$ is unimodular, if $F_n$ is a two-sided  F{\o}lner sequence, then so is $F'_n$, and one has that
    $\lambda(F'_n) \leq 2 \lambda(F_n) + \lambda(\{e\}) \leq 3 \lambda(F_n)$, 
    and hence for any $x\in L_p^+(\M)$,
    \begin{align*}
    \frac{1}{\lambda{(F_n)}} \int_{F_n} \alpha_g(x) d\lambda(g) 
    \leq \frac{1}{\lambda{(F_n)}} \int_{F'_n} \alpha_g(x) d\lambda(g) 
    \leq \frac{3}{\lambda{(F'_n)}} \int_{F'_n} \alpha_g(x) d\lambda(g).
    \end{align*}
   We note that with $T(x)= \int \alpha_g(x) d\omega(g)$, we may rewrite \eqref{Ergodic Operator Inequality} as: 
$$
   \frac{1}{\lambda{(F_n)}} \int_{F_n} \alpha_g(x) d\lambda(g) 
   \leq \frac{C}{N(n)}\sum_{j=0}^{N(n)-1} \int_G \alpha_g(x) d\omega^{(j)}(g), $$
   which is equivalent to 
       \begin{equation}\label{eq: main inequality} 
       \int_G \alpha_g(x) \left( \frac{C}{N(n)}\sum_{j=0}^{N(n)-1} \frac{d\omega^{(j)}}{d\lambda}(g) - \frac{\chi_{F_n}(g)}{\lambda{(F_n)}} \right) d\lambda(g)\geq 0.
        \end{equation}
In particular, if the following inequality is true, we will get \eqref{eq: main inequality}.
   \begin{equation}\label{Measure Comparison Inequality}
     C\frac{\lambda{(F_n)}}{N(n)} \sum_{j=0}^{N(n)-1} \frac{d\omega^{(j)}}{d\lambda}(g) \geq 1, \textrm{ } 
     \textrm{ for } \lambda\textrm{-a.e. }g \in F_n,
 \end{equation}
In fact, it can be shown that if \eqref{eq: main inequality} is true for any $G$, any action $\alpha:G \curvearrowright M$ on a von Neumann algebra with a s.f.n.trace and any element $x\in L_p^+(\M)$, then it also implies \eqref{Measure Comparison Inequality}. 

Thus, the problem of finding a Markov operator in Theorem \ref{action dominance} is transferred to the problem of finding a probability measure $\omega$ on $G$ which satisfies \eqref{Measure Comparison Inequality}. This is our alternative for the classical transference principle.

\subsection{Comparison of Measures}
This subsection is dedicated to the construction of a probability measure $\omega$, a function $N$, a family of compact sets $F_n$, and a constant $C$ such that 
 \eqref{eq: main inequality} holds. 

 For groups of polynomial growth, an immediate choice is to take  $F_n$'s to be the balls of radius $n$, and $\omega$ to be the uniform probability measure on the unit ball, as was demonstrated in \cite{HLW21}. Then the comparison \eqref{eq: main inequality} for groups of polynomial growth follows from 
 a Gaussian lower bound for the density of convolutions for such groups as obtained in \cite{HSC93}.  
Unfortunately, no such Gaussian lower bounds 
exist for groups of a higher growth rate than polynomial growth, in particular for general amenable groups whose growth rate might be exponential. This is where we need to develop a new construction, which is at the heart of this paper. Instead of uniform probability measures on the unit ball, we shall be using a convex combination of uniform probability measures on a sequence of sets whose dynamical interiors are F{\o}lner sets for our $\omega$. This construction is motivated by \cite{KV83}. 

First we state a couple of results that are building blocks in our proof. The following is a result in elementary calculus, that we include for the sake of completeness. Unless explicitly stated otherwise, all limits henceforth are for $n \to \infty$.

\begin{lem}\label{lemma on limits}
    If $\left(a_n\right)_{n \in \mathbb{N}}$ is a sequence in $[0,1]$ such that $\lim_{n\rightarrow\infty}a_n=0$
     and $\left(b_n\right)_{n \in \mathbb{N}}$ is a sequence such that $\lim_{n\rightarrow\infty}b_n=+\infty$, then 
    $$\lim_{n \to \infty} (1-a_n)^{b_n} = e^{-\lim _{n \to \infty} a_n b_n},$$ 
    if $\lim_{n \to \infty}  {a_n b_n}$ exists.
\end{lem}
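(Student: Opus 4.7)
The statement is standard calculus, so the proof will be routine; the plan is to take logarithms and use the fundamental limit $\lim_{x \to 0} \log(1-x)/(-x) = 1$.

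First I would restrict to $n$ large enough that $a_n < 1$, which is possible since $a_n \to 0$. (For indices where $a_n = 0$, both sides of the logarithmic identity below are zero, so these can be handled trivially, and the case where $a_n = 1$ for some small $n$ does not affect the limit.) For such $n$, one can write
\[
\log\bigl((1-a_n)^{b_n}\bigr) = b_n \log(1-a_n) = -a_n b_n \cdot \frac{\log(1-a_n)}{-a_n}.
\]
Since $a_n \to 0$, the factor $\frac{\log(1-a_n)}{-a_n}$ tends to $1$ (this is the standard derivative $\frac{d}{dx}\log(1-x)\bigr|_{x=0} = -1$ or, equivalently, the Taylor expansion $\log(1-x) = -x + O(x^2)$).

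Next, I would split into two cases according to the value of $L := \lim_{n \to \infty} a_n b_n$. If $L \in [0, \infty)$, then the product $-a_n b_n \cdot \frac{\log(1-a_n)}{-a_n}$ converges to $-L$, so by continuity of the exponential, $(1-a_n)^{b_n} \to e^{-L}$. If $L = +\infty$, the product diverges to $-\infty$, whence $(1-a_n)^{b_n} \to 0$, which is consistent with the convention $e^{-\infty} = 0$ that is implicit in the statement.

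There is no serious obstacle: the only minor care needed is to ensure that the indices where $1-a_n \leq 0$ (or $a_n = 0$) do not cause problems when passing to logarithms. Since $a_n \to 0$, eventually $0 \leq a_n < 1$, and the finitely many initial indices are irrelevant for the limit. The result then follows immediately from the chain of equalities above and the continuity of $\exp$.
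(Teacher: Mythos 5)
Your proof is correct and follows essentially the same route as the paper's: both take logarithms, factor out $-a_n b_n$, and use the fact that $\log(1-a_n)/(-a_n)\to 1$ (the paper via the explicit power series $\log(1-a_n)=-\sum_{j\ge 1}a_n^j/j$, you via the Taylor/derivative form), then split into cases according to the value of $\lim a_n b_n$. Your extra care about indices with $a_n=0$ or $a_n=1$ is a harmless refinement that the paper omits.
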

\begin{proof}
    Let us define $c_n = (1-a_n)^{b_n}$. Then,
\begin{align*}
    \log(c_n) 
    = b_n \log(1-a_n)
    = b_n \left( - \sum_{j=1}^\infty \frac{\left(a_n\right)^j}{j} \right)
    = - a_n b_n \left( 1 + \frac{1}{2} a_n + \frac{1}{3} a_n^2  + \dots \right).
\end{align*}
There are exactly three possibilities at this point:
 \begin{enumerate}[label=(\roman*)]
    \item If $\lim a_n b_n = 0$, then $\lim \log(c_n) = 0$, that is $\lim c_n = 1$.

    \item If $\lim a_n b_n = +\infty$, then $\lim \log(c_n) = - \infty$, that is $\lim c_n = 0$.

    \item If $\lim a_n b_n = c \in \left(0, +\infty \right)$, then $\lim \log(c_n) = - c$, that is $\lim c_n = e^{-c}$.
\end{enumerate}
This proves the claim.
\end{proof}

The followings are a couple of definitions that might be known to some in the field of analysis on groups, that we state for the sake of completeness.

\begin{defn}\label{interior}
    Let $H,K \subset G$ be compact. For all $h \in H$, we define the 
    \textit{left dynamical interior of $K$ w.r.t. $H$} to be $\iota_l(H,K) := \{g \in K \mid Hg \subset K \}$. 
    Similarly we define the \textit{right dynamical interior}: 
    $\iota_r(H,K) := \{g \in K \mid gH \subset K \}$. 
    And finally, we may define the \textit{bilateral dynamical interior} for $K$ with respect to $H_1, H_2$, all compact sets: 
    $\iota(H_1, H_2, K):= \{ g \in K \mid H_1 g H_2 \subset K \}$.
\end{defn}
Note that our definition of dynamical interior is slightly different compared to some of the other existing notions, e.g. \cite{KL16}. 

\begin{rem}\label{interiors characterisation}
    It is easy to see that 
    $\iota_l(H,K) 
    = K \cap \bigcap_{h \in H} h^{-1}K 
    = \bigcap_{h \in H} {\iota_l(h,K)}$, 
    where we denote $\setof{h}$ by $h$ to simplify the notation. 
    Similarly, we have 
    $\iota_r(H,K) 
    = K \cap \bigcap_{h \in H} K h^{-1} 
    = \bigcap_{h \in H} {\iota_r(h,K)}$, and 
    $\iota(H_1,H_2,K) 
    = K \cap \bigcap_{h_1 \in H_1, h_2 \in H_2} h_1^{-1}K h_2^{-1} 
    = \bigcap_{h_1 \in H_1, h_2 \in H_2} {\iota(h_1,h_2,K)}$.
\end{rem}

With this notion of dynamical interior, we can state the following 
lemma: 
\begin{lem}\label{convolution absorption for two}
    Let $G$ be a locally compact second countable group and  $H,K \subset G$ be compact subsets. 
    For $g \in K$, $\left(  \frac{\chi_H}{\lambda( H )} * \chi_K \right) (g) = 1$ iff $g \in \iota_l(H_1^{-1},K)$ where $H_1 \subset H$ with $\lambda(H\backslash H_1)=0$.
    {Furthermore, if $G$ is unimodular, then} $\left( \chi_K *  \frac{\chi_H}{\lambda(H)}  \right) (g) = 1$ iff $g \in \iota_r(H_2^{-1},K)$ where $H_2 \subset H$ with $\lambda(H\backslash H_2)=0$.
\end{lem}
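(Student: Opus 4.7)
The plan is to unfold the two convolution integrals using the formulas collected in the preliminaries and then read off the dynamical interior condition from the resulting almost-everywhere statement.

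For the first assertion, I would write
\begin{align*}
\left(\frac{\chi_H}{\lambda(H)} * \chi_K\right)(g)
= \frac{1}{\lambda(H)} \int_G \chi_H(h)\, \chi_K(h^{-1} g)\, d\lambda(h)
= \frac{1}{\lambda(H)} \int_H \chi_K(h^{-1} g)\, d\lambda(h).
\end{align*}
Since the integrand is bounded above by $1$ and the measure of $H$ appears as the denominator, the convolution equals $1$ at $g$ if and only if $\chi_K(h^{-1} g) = 1$ for $\lambda$-almost every $h \in H$. Defining $H_1 := \{h \in H : h^{-1} g \in K\}$, this is equivalent to $\lambda(H \setminus H_1) = 0$, which in turn says exactly that $H_1^{-1} g \subset K$. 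Combined with the assumption $g \in K$, this is precisely the statement $g \in \iota_l(H_1^{-1}, K)$ by Definition~\ref{interior}.

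For the second assertion, unimodularity means the modular function is identically $1$, so $d\tilde\lambda = d\lambda$, and the third expression in the convolution formula from the preliminaries gives
\begin{align*}
\left(\chi_K * \frac{\chi_H}{\lambda(H)}\right)(g)
= \frac{1}{\lambda(H)} \int_H \chi_K(g h^{-1})\, d\lambda(h).
\end{align*}
Exactly the same argument applies: the integral achieves its maximum $\lambda(H)$ if and only if $g h^{-1} \in K$ for $\lambda$-almost every $h \in H$, and setting $H_2 := \{h \in H : g h^{-1} \in K\}$ one has $\lambda(H \setminus H_2) = 0$ and $g H_2^{-1} \subset K$, i.e. $g \in \iota_r(H_2^{-1}, K)$.

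The argument is essentially bookkeeping of the two convolution formulas against the definition of the dynamical interior; unimodularity enters only to ensure that the ``right'' convolution integrates against $\lambda$ rather than the right Haar measure, putting both sides of the Lemma on equal footing. The only point that deserves a line of care is to note that $H_1$ and $H_2$ are genuinely measurable subsets of $H$ (which is immediate from the measurability of $\chi_K$ and the continuity of group multiplication and inversion), so that the a.e.\ statement can be upgraded to a genuine set-inclusion statement of the form $H_i^{-1} g \subset K$ (resp.\ $g H_i^{-1} \subset K$).
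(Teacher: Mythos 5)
Your proof is correct and follows essentially the same route as the paper's: unfold the two convolution integrals (using unimodularity to kill the modular-function factor in the right convolution) and translate the resulting a.e.\ condition on $H$ into the dynamical-interior statement. The only difference is cosmetic — you exhibit $H_1$ and $H_2$ explicitly as the sets of $h\in H$ where the membership condition holds, whereas the paper merely asserts their existence — and your added remark on their measurability is a harmless extra.
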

\begin{proof}
    Using the definition of convolution, we have that 
    $$\left( \frac{\chi_H}{\lambda(H)}  * \chi_K \right) (g)
    = \frac{1}{\lambda(H)} \int_G \chi_H (h) \chi_K (h^{-1} g) d\lambda(h) = \frac{1}{\lambda(H)} \int_H \chi_K (h^{-1} g) d\lambda(h).$$ 
    This is equal to $1$ iff $h^{-1}g \in K$ for $\lambda$-a.e. $h \in H$, which is  true iff there exists $H_1 \subset H$ with $\lambda(H \backslash H_1)=0$ such that  $g \in \iota_l(H_1^{-1},K)$. 
    
    For the second part, we have 
    $$\left( \chi_K  *  \frac{\chi_H}{\lambda(H)}\right) \! (g)
        = \frac{1}{\lambda(H)} \!\int _G \chi_K (h) \chi_H (h^{-1} g) d\lambda(h) 
        = \frac{1}{\lambda(H)} \!\int_G  \chi_K (g k^{-1}) \chi_H (k) \Delta(k^{-1}) d\lambda(k).$$ 
        If $G$ is unimodular, we have $\Delta(k^{-1}) \!=\! 1$ for all $k$, and this can be written as $\frac{1}{\lambda(H)} \!\int\! \chi_K (g k^{-1}) \chi_H (k) d\lambda(k) 
        \!=\! \frac{1}{\lambda(H)} \int_H \chi_K (g k^{-1}) d\lambda(k)$. This is equal to $1$ iff $g k^{-1} \!\in\! K$  for $\lambda$-a.e. $k \!\in\! H$ which  is true iff there exists $H_2 \subset H$ with $\lambda(H \backslash H_2)=0$ such that  $g \in \iota_r(H_2^{-1},K)$. 
\end{proof}
We observe from the lemma above that the convolution operation on the characteristic function of the set $K$ with respect to the probability measure on the set $H$ still yields $1$ on the dynamical interior of $K$. 
By an induction argument, we get the corollary below which allows us to absorb multiple convolution products from both sides.

\begin{cor}\label{convolution absorption for n}
    {Let $G$ be a locally compact second countable unimodular group.} For any $ n\in \mathbb{N}$, any compact sets $K_1, \dots , K_n \subset G$, 
    any $j \in \setof{1, \dots , n}$, and $g \in K_j$, the following are equivalent:
     \begin{enumerate}[label=(\roman*)]
        \item The convolution product yields $1$: $$\left(  \frac{\chi_{K_1}}{\lambda( K_1 )}  * \dots *  \frac{\chi_{K_{j-1}}}{\lambda( K_{j-1} )}  * \chi_{K_j} * \frac{\chi_{K_{j+1}}}{\lambda( K_{j+1} )}  * \dots *    
        \frac{\chi_{K_n}}{\lambda( K_n )} \right) (g) = 1.$$
        
        \item For every $i \in \setof {1, \dots , n }$, there exists $K_i'\subset K_i$ with $\lambda(K_i \backslash K_i') = 0$, such that $$g \in \bigcap_{k_1 \in K_1'} \dots \bigcap_{k_{j-1} \in K_{j-1}'} \bigcap_{k_{j+1} \in K_{j+1}'} \dots \bigcap_{k_n \in K_n'} k_1 \dots k_{j-1} K_j k_{j+1} \dots k_n.$$ 

        \item For every $i \in \setof{1, \dots , n }$, there exists $K_i'\subset K_i$ with $\lambda(K_i \backslash K_i') = 0$, such that 
        $$\left( K_1' \dots K_{j-1}' \right)^{-1} g \left( K_{j+1}' \dots K_n' \right)^{-1} \subset K_j.$$ 

        \item For every $i \in \setof{1, \dots , n }$, there exists $K_i'\subset K_i$ with $\lambda(K_i \backslash K_i') = 0$, such that 
        $$g \in \iota\left({\left( K_1' \dots K_{j-1}' \right)^{-1}}, {\left( K_{j+1}' \dots K_n' \right)^{-1}}, K_j \right).$$
    \end{enumerate}
\end{cor}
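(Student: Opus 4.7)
The plan is to prove $(\mathrm{i})\Longleftrightarrow(\mathrm{ii})$ by induction on $n$, with Lemma \ref{convolution absorption for two} serving as the base case; the remaining equivalences $(\mathrm{ii})\Longleftrightarrow(\mathrm{iii})\Longleftrightarrow(\mathrm{iv})$ are purely formal. For the latter, note that $g \in k_1 \cdots k_{j-1} K_j k_{j+1} \cdots k_n$ is equivalent to $k_{j-1}^{-1} \cdots k_1^{-1}\, g\, k_n^{-1} \cdots k_{j+1}^{-1} \in K_j$; quantifying over $(k_i) \in \prod_{i \neq j} K_i'$ and using the identity $(AB)^{-1} = B^{-1} A^{-1}$ converts this into the set inclusion appearing in (iii), which is literally condition (iv) by Definition \ref{interior} and Remark \ref{interiors characterisation}.

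For $(\mathrm{i})\Longleftrightarrow(\mathrm{ii})$, the base case $n = 2$ is Lemma \ref{convolution absorption for two} applied with $j \in \{1,2\}$. For the inductive step, assume the statement for $n-1$ and consider an $n$-fold convolution; without loss of generality $j < n$ (the opposite case being handled symmetrically by peeling off the leftmost factor). Writing $f_n = f_{n-1} * \frac{\chi_{K_n}}{\lambda(K_n)}$ and using unimodularity, one has
\begin{align*}
f_n(g) = \frac{1}{\lambda(K_n)} \int_{K_n} f_{n-1}(g k^{-1})\, d\lambda(k),
\end{align*}
and by the inductive hypothesis $f_{n-1} \leq 1$, so $f_n(g) = 1$ iff $f_{n-1}(gk^{-1}) = 1$ for $\lambda$-almost every $k \in K_n$. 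Alternatively, iterating the unimodular change of variables produces the direct expression
\begin{align*}
f_n(g) = \frac{1}{\prod_{i \neq j}\lambda(K_i)} \int_{\prod_{i \neq j} K_i} \chi_{K_j}\!\left(k_{j-1}^{-1}\cdots k_1^{-1}\, g\, k_n^{-1}\cdots k_{j+1}^{-1}\right) \prod_{i \neq j} d\lambda(k_i),
\end{align*}
which makes it manifest that $f_n(g) = 1$ iff the integrand equals $1$ almost everywhere on $\prod_{i \neq j} K_i$.

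The main obstacle is precisely the final step: upgrading ``almost everywhere on the product'' to the stronger condition (ii), namely the existence of full-measure subsets $K_i' \subset K_i$ whose entire Cartesian product lies in the good set. An arbitrary full-measure subset of a product need not contain a product of full-measure subsets, so a blind Fubini will not suffice. I would exploit the fact that the good set is the preimage of the closed (because compact) set $K_j$ under a continuous map, hence is itself closed in $\prod_{i \neq j} K_i$; combined with the positivity of Haar measure on non-empty relatively open subsets of each $K_i$, this lets one iteratively define $K_i'$ as the set of those $k_i \in K_i$ for which the sliced condition holds almost everywhere with the previously-selected $K_l'$'s fixed, and then verify by a repeated application of Fubini that each $K_i'$ is full measure and that the entire product $\prod_{i \neq j} K_i'$ lands in the good set. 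Feeding this extraction back into the inductive step closes the induction.
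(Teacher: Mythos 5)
The paper gives no proof of this corollary beyond the single sentence ``By an induction argument'' following Lemma \ref{convolution absorption for two}, so your proposal is in effect filling in what the paper leaves implicit, and your route --- unfolding the convolution into an iterated integral over $\prod_{i\neq j}K_i$ via the unimodular change of variables and then extracting a product of full-measure sets --- is the natural way to do it. Your formal treatment of $(\mathrm{ii})\Leftrightarrow(\mathrm{iii})\Leftrightarrow(\mathrm{iv})$ is correct, and you are right to flag the one genuinely delicate point, which the paper's ``induction'' silently skips: a full-measure subset of a product need not contain a product of full-measure subsets (and the same difficulty reappears if one peels off factors one at a time, since the sets $H_1,H_2$ produced by Lemma \ref{convolution absorption for two} at each stage depend on the outer integration variable).

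Your proposed repair is essentially right but contains one inaccuracy as stated: it is \emph{not} true that every non-empty relatively open subset of a compact set $K_i$ has positive Haar measure (take $K_i=\{e\}\cup[1,2]$ in $\R$; the singleton is relatively open and null). The fix is to first pass to the measure-theoretic support, $K_i':=\{k\in K_i \mid \lambda(U\cap K_i)>0 \text{ for every open }U\ni k\}$, which by second countability is a closed subset with $\lambda(K_i\setminus K_i')=0$, and on which your positivity claim does hold. With this choice the argument closes immediately and the iterative slicing becomes unnecessary: the bad set $B=\{(k_i)\in\prod_{i\neq j}K_i \mid k_{j-1}^{-1}\cdots k_1^{-1}\,g\,k_n^{-1}\cdots k_{j+1}^{-1}\notin K_j\}$ is relatively open (complement of the preimage of the compact set $K_j$ under a continuous map) and $\lambda^{\otimes(n-1)}$-null by $(\mathrm{i})$; if some point of $\prod_{i\neq j}K_i'$ lay in $B$ it would have a basic relatively open product neighbourhood contained in $B$ with each factor of positive measure, contradicting nullity. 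Hence $\prod_{i\neq j}K_i'\subset B^c$, which is $(\mathrm{ii})$. The converse $(\mathrm{ii})\Rightarrow(\mathrm{i})$ is immediate since the complement of $\prod_{i\neq j}K_i'$ in $\prod_{i\neq j}K_i$ is null. With that one correction your argument is complete and, if anything, more careful than the paper's.
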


The corollary above motivates our eventual construction of $\omega$ in that if the dynamical interior of a set is proportionally large enough (which can happen for F{\o}lner sets), then it remains approximately invariant under convolution. 
We are now finally in a position to embark on the proof of \eqref{Measure Comparison Inequality}.  

Let $G$ be a locally compact second countable unimodular group. Let $\{ F_n \}_{n \in \mathbb{N}}$ be a family of symmetric compact subsets of $G$, each containing the group identity. For a given increasing function $N: \mathbb{N} \rightarrow \mathbb{N}$ such that $N(2)>2$, 
define another family $\{E_n\}_{n \in \mathbb{N}}$ by $E_1 = F_1$ and 
\begin{equation}\label{eq: def of En}
   E_n = \left(E_{n-1}^{N(n)-2}\right)^{-1} F_n \left(E_{n-1}^{N(n)-2}\right)^{-1} \quad \text{ for }n \geq 2.
\end{equation}
Let $t_n \in (0,1)$ for $n \in \mathbb{N}$ such that $\sum_{n \in \mathbb{N}} t_n = 1$. We define $\omega$ to be the unique probability measure on $G$ such that 
\begin{equation}\label{eq: def of w}
\frac{d \omega}{d \lambda} = \sum_{n \in \mathbb{N}} t_n \frac{\chi_{E_n}}{\lambda(E_n)}.
\end{equation}

\begin{prop}\label{measure comparison in terms of sizes}
    We keep the same notation as above. Set $r_n = \sum_{j=n}^\infty t_j$, if we choose $t_n$ and $N(n)$ such that $\lim _{n\rightarrow \infty}r_n N(n)\in (0, +\infty)$ and  $\lim _{n\rightarrow \infty}\frac{r_{n+1}}{r_n} \in (0, 1)$, 
    then 
    there exists a constant $C'>0$ such that, 
    $$\liminf _{n\rightarrow \infty}\frac{\lambda(F_n)}{N(n)} \sum_{j=0}^{N(n)-1} \frac{d \omega^{(j)}}{d \lambda} 
    \geq C' \left(\liminf_{n\rightarrow \infty} \frac{\lambda(F_n)}{\lambda(E_{n})} \chi_{F_n} \right).$$ 
\end{prop}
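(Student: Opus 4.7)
The plan is to expand each convolution power $\omega^{(j)}$ as a multinomial series over tuples of the $E_k$'s, retain a carefully chosen family of terms whose convolution evaluates to exactly $1/\lambda(E_n)$ on $F_n$ by Corollary \ref{convolution absorption for n}, and then extract the claimed lower bound by summation.

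First I would check that $\{E_n\}$ is an increasing family of symmetric compact sets each containing the identity: symmetry and $e\in E_n$ propagate by induction, while $E_m\subset E_{m+1}$ follows from $E_m = E_m\cdot e\cdot e\subset E_m^{N(m+1)-2} F_{m+1} E_m^{N(m+1)-2} = E_{m+1}$, using $N(m+1)\geq 3$ and $e\in F_{m+1}$. Consequently, any product of at most $N(n)-2$ factors $E_{k_\ell}$ with $k_\ell\leq n-1$ lies inside $E_{n-1}^{N(n)-2}$.

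By distributivity of convolution,
\[
\frac{d\omega^{(j)}}{d\lambda} \,=\, \sum_{k_1,\dots,k_j\in\mathbb{N}} t_{k_1}\cdots t_{k_j}\,\frac{\chi_{E_{k_1}}}{\lambda(E_{k_1})}\ast\cdots\ast\frac{\chi_{E_{k_j}}}{\lambda(E_{k_j})}.
\]
For each $j\in\{1,\dots,N(n)-1\}$ and each position $i\in\{1,\dots,j\}$ I would retain only those tuples with $k_i=n$ and $k_\ell\leq n-1$ for $\ell\neq i$. For such a tuple and $g\in F_n$, the left and right products lie inside $E_{n-1}^{N(n)-2}$, and by symmetry together with \eqref{eq: def of En} one has $E_{n-1}^{N(n)-2}\cdot g\cdot E_{n-1}^{N(n)-2} \subset E_{n-1}^{N(n)-2} F_n E_{n-1}^{N(n)-2} = E_n$; hence $g$ lies in the appropriate dynamical interior and Corollary \ref{convolution absorption for n} makes that convolution equal to $1/\lambda(E_n)$. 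Summing over the $j$ position choices and factoring $(1-r_n)^{j-1}=\bigl(\sum_{k=1}^{n-1}t_k\bigr)^{j-1}$ yields, for $g\in F_n$,
\[
\frac{d\omega^{(j)}}{d\lambda}(g) \,\geq\, j\,t_n\,(1-r_n)^{j-1}\,\frac{1}{\lambda(E_n)}.
\]

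Summing over $j=1,\dots,N(n)-1$ via the identity $\sum_{j=1}^{J} j x^{j-1}=\bigl(1-x^{J}(1+J(1-x))\bigr)/(1-x)^{2}$ with $x=1-r_n$ and $J=N(n)-1$ gives
\[
\frac{\lambda(F_n)}{N(n)}\sum_{j=0}^{N(n)-1}\frac{d\omega^{(j)}}{d\lambda}(g)\,\geq\,\frac{t_n}{N(n)\,r_n^{2}}\bigl(1-(1-r_n)^{N(n)-1}(1+(N(n)-1)r_n)\bigr)\,\frac{\lambda(F_n)\,\chi_{F_n}(g)}{\lambda(E_n)}.
\]
Since $t_n/r_n=1-r_{n+1}/r_n\to 1-\rho$, $r_n N(n)\to c$, and $(1-r_n)^{N(n)-1}\to e^{-c}$ by Lemma \ref{lemma on limits}, the scalar prefactor converges to $C':=(1-\rho)\bigl(1-(1+c)e^{-c}\bigr)/c$, which is strictly positive since $\rho\in(0,1)$ and $c\in(0,\infty)$. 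The conclusion then follows by passing to $\liminf$ and using that a convergent positive scalar factor passes through $\liminf$ multiplicatively. The main obstacle is the geometric containment $E_{n-1}^{N(n)-2} F_n E_{n-1}^{N(n)-2}\subset E_n$, which is exactly why the recursion \eqref{eq: def of En} builds in $N(n)-2$ ``protective'' factors on each side; together with the scaling hypotheses $r_n N(n)\to c$ and $r_{n+1}/r_n\to\rho$, this is calibrated so that the shrinking of $t_n$, the combinatorial factor $j$, and the growth of $\lambda(E_n)$ all balance, leaving $\lambda(F_n)/\lambda(E_n)$ as the only residual geometric loss.
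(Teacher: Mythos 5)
Your proposal is correct and follows essentially the same route as the paper: the same multinomial expansion of $\omega^{(j)}$, the same restriction to tuples in which the top index $n$ occurs exactly once, the same application of Corollary \ref{convolution absorption for n} via the containment built into \eqref{eq: def of En}, the same arithmetico-geometric summation, and the same limit evaluation yielding the constant $(1-\rho)\bigl(1-(1+c)e^{-c}\bigr)/c>0$. Your explicit verification that the $E_n$ are nested, symmetric and contain the identity is a detail the paper leaves implicit, but it does not change the argument.
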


\begin{proof}
    A direct computation gives that 
    \begin{align}\label{eq: conv sum}
    \frac{d \omega^{(j)}}{d \lambda} = \sum_{i_1, \dots, i_j \in \mathbb{N}} t_{i_1} \dots t_{i_j} \frac{\chi_{E_{i_1}}}{\lambda(E_{i_1})}  * \dots * \frac{\chi_{E_{i_j}}}{\lambda(E_{i_j})}.
    \end{align}
    Now we shall obtain lower bounds for $\frac{d \omega^{(j)}}{d \lambda}$ using Corollary \ref{convolution absorption for n}. Suppose for the $j$ indices $i_1, \dots, i_j \in \mathbb{N}$, we consider only those where the highest index is $n$, and it occurs only once. Then, we have
    \begin{align}\label{eq: lower estimate}
    \begin{split}
\frac{d \omega^{(j)}}{d \lambda}
    &\geq \sum_{i_1, \dots, i_{j-1} < n}t_{i_1} \dots t_{i_{j-1}} t_{n} \frac{1}{\lambda(E_{n})} 
    \left( \chi_{E_{n}} * \frac{\chi_{E_{i_1}}}{\lambda(E_{i_1})}  * \dots * \frac{\chi_{E_{i_{j-1}}}}{\lambda(E_{i_{j-1}})}  \right.\\
    & \quad \; \;+ \frac{\chi_{E_{i_1}}}{\lambda(E_{i_1})} * \chi_{E_{n}} *  \frac{\chi_{E_{i_2}}}{\lambda(E_{i_2})}  * \dots * \frac{\chi_{E_{i_{j-1}}}}{\lambda(E_{i_{j-1}})} + \dots\\
    & \quad \;\;\left. + \frac{\chi_{E_{i_1}}}{\lambda(E_{i_1})}  *  \dots *  \frac{\chi_{E_{i_{j-1}}}}{\lambda(E_{i_{j-1}})} * \chi_{E_{n}}
    \right).    
\end{split}   
\end{align}
We may observe from \eqref{eq: def of En} that $e \in E_n = E_n^{-1} \subset E_{n+1}$ for all $n \in \mathbb{N}$, and that 
$$F_n = \iota\left({\left(E_{n-1}^{N(n)-2}\right)^{-1}}, {\left(E_{n-1}^{N(n)-2}\right)^{-1}}, {E_n}\right).$$ 
Hence, for every $g \in F_n$, by Corollary  \ref{convolution absorption for n}, every convolution product in the sum above equal $1$, and we are left with 
\begin{align*}
    \frac{d \omega^{(j)}}{d \lambda}(g)
    \geq \sum_{i_1, \dots, i_{j-1} < n}t_{i_1} \dots t_{i_{j-1}} t_{n} \frac{1}{\lambda(E_{n})} j
    = \left( t_1 + \dots + t_{n-1} \right)^{j-1} t_{n} j \frac{1}{\lambda(E_{n})}.
\end{align*} 
We rewrite the term on the right-hand side of the inequality above using $r_n$, then we have
\begin{align*}
    \frac{d \omega^{(j)}}{d \lambda}(g)
    &\geq \left( 1 - r_{n} \right)^{j-1} \left( r_{n} - r_{n+1} \right) j \frac{1}{\lambda(E_{n})},
\end{align*}
and
\begin{align*}
    \sum_{j=0}^{N(n)-1} \frac{d \omega^{(j)}}{d \lambda} (g) \geq \frac{1}{\lambda(E_{n})} \left( r_{n} - r_{n+1} \right) \sum_{j=0}^{N(n)-1} \left( 1 - r_{n} \right)^{j-1} j.
\end{align*}
The sum $\sum_{j=0}^{N(n)-1} \left( 1 - r_{n} \right)^{j-1} j$ can now be computed exactly using the summation formula for an arithmetico-geometric sequence:
\begin{align*}
    \sum_{j=0}^{N(n)-1} j \left( 1 - r_{n} \right)^{j-1} = \frac{1 - r_n N(n) (1-r_n)^{N(n)-1} - (1-r_n)^{N(n)}}{r_n^2} .
\end{align*}
Hence,
\begin{align*}
    \frac{\lambda(F_n)}{N(n)} \sum_{j=0}^{N(n)-1} \frac{d \omega^{(j)}}{d \lambda} (g)
    \geq \frac{\lambda(F_n)}{\lambda(E_{n})} \left( 1 - \frac{r_{n+1}}{r_n} \right) \left( \frac{1 - (1-r_n)^{N(n)}}{r_n N(n)}  - (1-r_n)^{N(n)-1} \right).
\end{align*}

We now need to evaluate this limit as $n \to \infty$.  
We assume that $\lim r_n N(n)$ and $\lim \frac{r_{n+1}}{r_n}$ exist. Then, by Lemma \ref{lemma on limits},
\begin{align*}
    &\liminf_{n\rightarrow \infty} \frac{\lambda(F_n)}{N(n)} \sum_{j=0}^{N(n)-1} \frac{d \omega^{(j)}}{d \lambda} (g) \\
    &\geq \left( \liminf _{n\rightarrow \infty}\frac{\lambda(F_n)}{\lambda(E_{n})} \right) \left( 1 - \left( \lim _{n\rightarrow \infty}\frac{r_{n+1}}{r_n} \right)\right) \left( \frac{1 - e^{-\lim_{n\rightarrow \infty} r_n N(n)}}{\lim_{n\rightarrow \infty} r_n N(n)} - e^{-\lim_{n\rightarrow \infty} r_n N(n)} \right).
\end{align*}
Note that the continuous function $f(x) = \frac{1 - e^{-x}}{x} - e^{-x}$ is positive and bounded on $ (0, \infty)$ with $\lim_{x \to 0} f(x) =\lim_{x \to \infty} f(x)= 0$. 
Hence, one concludes that $\frac{1 - e^{-\lim r_n N(n)}}{\lim r_n N(n)} - e^{-\lim r_n N(n)}$ is  positive when  $\lim _{n\rightarrow \infty}r_n N(n)\in (0, +\infty)$.  
So, now all we need is to ensure that $  \lim _{n\rightarrow \infty}\frac{r_{n+1}}{r_n} <1$. We observe that $\frac{r_{n+1}}{r_n} \in [0,1]$.  If $0< \lim _{n\rightarrow \infty} \frac{r_{n+1}}{r_n} <1$, then asymptotically, $r_n \sim c^{-n}$ for some $c > 1$, and thus $N(n)$ is asymptotically $A c^n$ for some $A>0$. $\lim_{n\rightarrow \infty} \frac{r_{n+1}}{r_n} = 0$ also works, but then $N(n)$ increases superexponentially, which is less optimal. 

\end{proof}

\begin{rem}
    A standard choice of $t_n$ and $N(n)$ is $t_n = \frac{1}{2^n}$ and $N(n) = 2^n$. In this case, one can check that  
    $$\liminf_{n\rightarrow \infty} \frac{\lambda(F_n)}{2^n}\sum_{j=0}^{2^n-1} \frac{d \omega^{(j)}}{d \lambda} 
    \geq \frac{1}{4}\left(1 - \frac{3}{e^2} \right) \left( \liminf_{n\rightarrow \infty} \frac{\lambda(F_n)}{\lambda(E_{n})} \right) \chi_{F_n}.$$
\end{rem}



We would like to note that no amenability assumptions on the group have been used in this subsection so far. To complete the proof of Theorem \ref{action dominance}, it remains to  construct compact sets $E_n$ and $F_n$ in $G$ such that $$\liminf_{n \to \infty}\frac{\lambda(F_n)}{ \lambda(E_n)} > 0.$$ In the following, we show that when 
$G$ is an amenable unimodular group, such compact sets indeed exist and, moreover, the limit equals 
$1$.

\begin{proof}[Proof of Theorem \ref{action dominance}]
 As discussed earlier, the existence of two-sided  F{\o}lner sequences of $G$ is guaranteed by Proposition \ref{biinvariant Folner sets existence}, and 
    without loss of generality we may also assume our F{\o}lner sets to be symmetric 
    and containing the group identity. Given such a F{\o}lner sequence $\{ F_n \}_{n \in \mathbb{N}}$, the goal now is to construct a subsequence $\setof{F_{n_k}}_{k\in\mathbb{N}}$ that satisfies, given $E_1 = F_{n_1} = F_1$ and $E_k = \left(E_{k-1}^{N(k)-2}\right)^{-1} F_{n_k} \left(E_{k-1}^{N(k)-2}\right)^{-1}$ for $k \geq 2$, $\lim_{k\rightarrow \infty} \frac{\lambda(F_{n_k})}{\lambda(E_{k})} = 1$.
    Then applying Proposition \ref{measure comparison in terms of sizes} to $F_{n_k}$ and $E_{n_k}$ immediately yields the result.
We choose a decreasing sequence $\setof{ \varepsilon_k }_{k \in \mathbb{N}}$ such that $\lim_{k\rightarrow \infty} \varepsilon_k = 0$. Since $\{ F_n \}_{n \in \mathbb{N}}$ is a two-sided  F{\o}lner sequence, there must exist $n_2\in \mathbb{N}$ such that $$\frac{\lambda\left(\left(E_1^{N(2)-2)}\right)^{-1} F_{n_2} \left(E_1^{N(2)-2)}\right)^{-1} \backslash F_{n_2} \right)}{\lambda( F_{n_2} )} < \varepsilon_2,$$
     and we set $E_2=\left(E_1^{N(2)-2)}\right)^{-1} F_{n_2} \left(E_1^{N(2)-2)}\right)^{-1}$.
     We proceed in a similar fashion for all $k \geq 2$ and obtain $F_{n_k}$ as a subsequence, where we choose $n_k$ such that  $$\frac{\lambda\left(E_k \backslash F_{n_k} \right)}{\lambda\left(F_{n_k}\right)} 
    < \varepsilon_k$$
    with $E_k = \left(E_{k-1}^{N(k)-2}\right)^{-1} F_{n_k} \left(E_{k-1}^{N(k)-2}\right)^{-1}$ for $k \geq 2$. 
    Consequently we get $\frac{\lambda(F_{n_k})}{\lambda(E_{k}) } \geq \frac{1}{1 + \varepsilon_k}$. Therefore, $\lim_{k\rightarrow \infty} \frac{\lambda(F_{n_k})}{\lambda(E_{k}) } = 1$ which completes the proof.
\end{proof}


\begin{rem}\label{potential obstructions for nonunimodular groups}
The unimodularity of the group is essential in our computation. This is mainly because, to obtain a positive lower bound for $\liminf _{n\rightarrow \infty}\frac{\lambda(F_n)}{N(n)} \sum_{j=0}^{N(n)-1} \frac{d \omega^{(j)}}{d \lambda} (g)$ for $g\!\in\! F_n$,
when estimating $\frac{d \omega^{(j)}}{d \lambda}$ from below, we must take into account all $n$ terms on the right-hand side of \eqref{eq: lower estimate}. To handle these terms, it is necessary to absorb uniform probability measures on both sides, which requires unimodularity, as indicated in Corollary \ref{convolution absorption for n}. To extend the comparison of measures to non-unimodular groups, one possible approach would be to find a way to handle the convolution products in \eqref{eq: conv sum}  whose highest index $n$ appears more than once. 

\end{rem}
\begin{rem}
For $T$ acting on the Hilbert space $L_2(\M)$, the adjoint $T^*$ is given by 
\begin{align*}
T^*(x) = \int _G\alpha_{g^{-1}}(x) d\omega(g) = \int _G\alpha_g(x) d\omega(g^{-1}).
\end{align*}
We recall the following definition from \cite{JX07}: $T$ is \textit{symmetric} relative to $\tau$, if $\tau \left( x T(y)^* \right) = \tau \left( T(x) y^* \right)$ for all $x,y \in L_2(\M)\cap \M$. And, we say $\omega$ is \textit{symmetric} if for all measurable subsets $E \subset G$, $\omega(E^{-1}) = \omega(E)$. It is direct to see that $T$ is symmetric iff $\omega$ is symmetric. 
Our $\omega$ is indeed symmetric. In \cite[Theorem 6.7]{JX07} it was shown that if $T$ is symmetric and positive on $L_2(\M)$,  $T^n$ goes to $0$  b.a.u. or a.u. for $p$ in the suitable range as well. One can check that it is also possible to dominate the ergodic averages on $F_n$ by $T^n$. However, this does not yield any difference in the growth rate of the averaging sets.
\end{rem}


\section{Applications to Maximal Inequalities and Individual Ergodic Theorems}
Based on the inequality \eqref{Ergodic Operator Inequality} in Theorem \ref{action dominance}, the maximal ergodic theorem and the individual ergodic theorems can be directly deduced from their counterparts for integer actions.
\subsection{Maximal inequalities for unimodular amenable groups}
The maximal ergodic theorem for amenable groups was proved in \cite[Theorem 6.4]{CW22}, the techniques used there are quite involved. In the following, we give a much simpler proof of this result for the case of unimodular amenable groups as an application of Theorem \ref{action dominance}.
\begin{thm}\label{maximal ergodic theorem with F_n}
    Let $\left( \M, \tau \right)$ be a von Neumann algebra with a s.f.n. trace $\tau$. 
    Let $G$ be a locally compact second countable unimodular group with a {$w^\star$-continuous} action $\alpha: G \actson \left( \M, \tau \right)$ that preserves the trace $\tau$. 
    For every two-sided  F{\o}lner sequence in $G$, there exists a subsequence $\setof{F_n}_{n \in \mathbb{N}}$ such that if we define the averaging operators 
    $$A_n(x) := \frac{1}{\lambda{(F_n)}} \int_{F_n}\alpha_g(x) d\lambda(g),$$
    for $n\in\mathbb{N}$, $x \in L_p(\M)$, $p \in [1, \infty)$, 
    the sequence of operators $\left(A_n\right)_{n \in\mathbb{N}}$ is of strong type $(p,p)$ for $p \in (1, \infty)$ and of weak type $(1,1)$.
\end{thm}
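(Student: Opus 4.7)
The plan is to deduce both maximal inequalities for $(A_n)$ from the corresponding inequalities for integer actions (Propositions \ref{JungeXuMaximalIntegers} and \ref{YeadonMaximalIntegers}) via the ergodic average dominance \eqref{Ergodic Operator Inequality}. Let $\{F_n\}_{n\in\mathbb{N}}$ be the subsequence, $C > 0$ the constant, $N:\mathbb{N}\to\mathbb{N}$ the strictly increasing function, and $T$ the Markov operator furnished by Theorem \ref{action dominance}, and set $M_k := \frac{1}{k}\sum_{j=0}^{k-1} T^j$. Fix $n_0 \in \mathbb{N}$ so that $A_n(x) \leq C M_{N(n)}(x)$ holds for every $x \in L_p^+(\M)$ and every $n \geq n_0$.

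For the strong type $(p,p)$ with $p \in (1,\infty)$, I would first reduce to $x \in L_p^+(\M)$ by the standard decomposition $x = (x_1 - x_2) + i(x_3 - x_4)$ with $x_j \in L_p^+(\M)$ and $\|x_j\|_p \leq \|x\|_p$, using the linearity of each $A_n$ and the triangle inequality in $L_p(\M;\ell_\infty)$. For positive $x$, Proposition \ref{JungeXuMaximalIntegers} applied to $T$ together with \eqref{eq: charac for sup norm} yields, for any $\varepsilon>0$, an element $a \in L_p^+(\M)$ with $M_k(x) \leq a$ for every $k\in\mathbb{N}$ and $\|a\|_p \leq (1+\varepsilon)C_p\|x\|_p$. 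Then \eqref{Ergodic Operator Inequality} gives $A_n(x) \leq C a$ for all $n \geq n_0$, while for the finitely many $n < n_0$ we dominate $A_n(x) \leq b := \sum_{n<n_0} A_n(x) \in L_p^+(\M)$ with $\|b\|_p \leq n_0\|x\|_p$, since $\alpha$ extends to an isometric action on $L_p(\M)$. Combining, $A_n(x) \leq Ca + b$ for every $n$, and by \eqref{eq: charac for sup norm} the strong type $(p,p)$ holds with constant depending only on $p$, $C$, and $n_0$.

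For the weak type $(1,1)$, by Definition \ref{StrongAndWeakTypes} it suffices to treat $x \in L_1^+(\M)$. Given $\mu > 0$, apply Proposition \ref{YeadonMaximalIntegers} to $T$ at threshold $\mu/C$: there exists a projection $e_0 \in \M$ with $\tau(1_\M - e_0) \leq \frac{4C}{\mu}\|x\|_1$ and $e_0 M_k(x) e_0 \leq (\mu/C)\, e_0$ for every $k\in\mathbb{N}$, whence $e_0 A_n(x) e_0 \leq \mu e_0$ for every $n \geq n_0$ by \eqref{Ergodic Operator Inequality}. To handle $n < n_0$, use the spectral projections $p_n := \chi_{[0,\mu)}(A_n(x))$, which satisfy $p_n A_n(x) p_n \leq \mu p_n$ and $\tau(1_\M - p_n) \leq \|x\|_1/\mu$ by Chebyshev's inequality. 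The projection $e := e_0 \wedge \bigwedge_{n<n_0} p_n$ then satisfies $\tau(1_\M - e) \leq \frac{4C + n_0}{\mu}\|x\|_1$ and $e A_n(x) e \leq \mu e$ for every $n \in \mathbb{N}$, establishing the weak type $(1,1)$ inequality.

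Because Theorem \ref{action dominance} has absorbed essentially all the work specific to amenable groups (the quasi-tiling and noncommutative Calder\'on--Zygmund arguments of \cite{CW22}), there is no substantial obstacle in this deduction. The only mildly delicate point is the clean handling of the finitely many initial averages $A_n$ with $n < n_0$ not controlled by the dominance, which is routine thanks to the contractivity of $A_n$ on $L_p(\M)$ and the Chebyshev-type spectral projection argument.
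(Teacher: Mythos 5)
Your proposal is correct and follows essentially the same route as the paper: dominate $A_n(x)\leq C M_{N(n)}(x)$ for positive $x$ via Theorem \ref{action dominance}, then invoke Proposition \ref{JungeXuMaximalIntegers} together with \eqref{eq: charac for sup norm} for the strong type and Proposition \ref{YeadonMaximalIntegers} for the weak type, reducing to positive elements by the four-term decomposition. The only difference is that you explicitly patch the finitely many indices $n<n_0$ not covered by the ``large enough $n$'' clause of the dominance (via the crude majorant $b$ and the Chebyshev spectral projections), a routine point the paper's proof passes over silently.
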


\begin{proof}
    Recall that Theorem \ref{action dominance} implies for all $x \in L_p^+(\M)$, $0 \leq A_n(x) \leq C M_{N(n)}(x)$, where $M_{N(n)}(x)$  is the ergodic average for $T$ given in the form in \eqref{T} with $\omega$ being the measure given in \eqref{eq: def of w}.
    We first prove the strong type $(p,p)$ for $p \in (1, \infty)$.
    We obtain from Proposition \ref{JungeXuMaximalIntegers} and  \eqref{eq: charac for sup norm}, that for all $x \in L_p^+(\M)$, 
    there exists $a \in L_p^+(\M)$ such that $M_{N(n)}(x) \leq a$, and $\norm{a}_p \leq C_p \norm{x}_p$. Hence, $A_n(x) \leq Ca$, and consequently, $\norm{\sup_n^+ A_n(x)}_p \leq CC_p\norm{x}_p$.  
    Since every element of $L_p$ can be written as a linear combination of at most $4$ positive elements, we have $\norm{\sup_n^+ A_n(x)}_p \leq 4CC_p\norm{x}_p$ for all $x \in L_p(\M)$. 
    Yeadon's maximal inequality Proposition \ref{YeadonMaximalIntegers} says that for all $x \in L_1^+(\M)$ and $\e >0$, there exists a projection $e \in \M$ such that $\tau(1_\M - e) \leq \frac{4}{\e} \norm{x}_1$ and $eM_{N(n)}(x)e \leq \e e$ for all $n \in \mathbb{N}$. 
    Therefore, $A_n(x) \leq C M_{N(n)}(x)$ implies $eA_n(x)e \leq C eM_{N(n)}(x)e \leq \e C  e$. 
    Hence, $\left(A_n\right)_{n \in\mathbb{N}}$ is of weak type $(1,1)$. 
\end{proof}

To deduce the b.a.u. convergence for $p\in [1, 2)$ and the a.u. convergence for $p \in [2, \infty)$ from the maximal inequality, we will need the following noncommutative analogue of the Banach principle given in \cite{CL21}.

\begin{lem}[Theorem 3.1, \cite{CL21}]\label{nc banach principle}
Let $1\leq p <2$ (resp. $2 \leq p < \infty$) and $(S_n)_{n\geq 0}$ be a
sequence of positive linear maps on $L_p(\M)$ of weak type $(p,p)$. Then the space of the elements
$x\in L_p(\M)$ such that $S_n(x)$ converges b.a.u. (resp. a.u.) is closed in $L_p(\M)$.    
\end{lem}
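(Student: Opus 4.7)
The goal is to establish a noncommutative analogue of the Banach principle: the set of $x \in L_p(\M)$ for which $(S_n(x))$ converges b.a.u.\ (resp.\ a.u.) is closed. The plan is to mimic the classical argument, using the weak-type maximal inequality to convert $L_p$-norm approximation into uniform approximation on a suitably large projection, and then showing that the resulting sequence is almost-uniformly Cauchy. Concreteness is crucial here because, unlike in the commutative setting, there is no single ambient space in which every $\tau$-measurable operator lives as a bounded operator, so limits have to be constructed by gluing.

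Suppose then that $(x_k) \subset L_p(\M)$ converges to $x$ in $L_p$-norm, with each $S_n(x_k) \to y_k$ b.a.u.\ as $n \to \infty$. Fix $\varepsilon > 0$. I would pass to a rapidly convergent subsequence so that $\|x - x_k\|_p < 4^{-k}$, and for each $k$ split $x - x_k$ into at most four positive components and apply the weak-type $(p,p)$ inequality at level $\lambda_k = 2^{-k}$. This produces a projection $f_k \in \M$ with $\tau(1_\M - f_k) < \varepsilon/2^{k+2}$ and $\|f_k S_n(x - x_k) f_k\|_\infty \leq C 2^{-k}$ uniformly in $n$; the trace bound is affordable because $\|x-x_k\|_p^p/\lambda_k^p \leq 2^{-kp}$ is summable in $k$. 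In parallel, the b.a.u.\ convergence of $S_n(x_k)$ to $y_k$ produces a projection $e_k$ with $\tau(1_\M - e_k) < \varepsilon/2^{k+2}$ and $\|e_k (S_n(x_k) - y_k) e_k\|_\infty \to 0$.

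Setting $e = \bigwedge_k (e_k \wedge f_k)$, one has $\tau(1_\M - e) < \varepsilon$. The decomposition
$$S_n(x) - S_m(x) = \bigl(S_n(x - x_k) - S_m(x - x_k)\bigr) + \bigl(S_n(x_k) - S_m(x_k)\bigr),$$
together with $e \leq e_k \wedge f_k$, then yields $\limsup_{n,m} \|e(S_n(x) - S_m(x)) e\|_\infty \leq 2C \cdot 2^{-k}$ for every $k$; letting $k \to \infty$ shows that $(eS_n(x)e)_n$ is norm-Cauchy in $e\M e$, hence converges to some $z_\varepsilon$. A standard nesting argument across a decreasing sequence $\varepsilon_j \to 0$, with projections $e^{(j)}$ arranged to be compatible, assembles these partial limits into a single $\tau$-measurable operator $y$ such that $S_n(x) \to y$ b.a.u., which is what we want.

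The a.u.\ case for $p \geq 2$ runs along the same lines but with the bilateral expression $\|f \cdot f\|_\infty$ replaced by the one-sided norm $\|\cdot f\|_\infty$. The hard part will therefore be obtaining the required one-sided weak-type control from the given bilateral hypothesis; this is precisely where the restriction $p \geq 2$ enters, since in that range $|S_n(x)|^2 \in L_{p/2}(\M)$ with $p/2 \geq 1$, and a Cauchy--Schwarz/duality argument lets one trade control of $f_k S_n(\,\cdot\,) f_k$ for control of $S_n(\,\cdot\,) f_k$. The other subtle point throughout is the gluing step, i.e., verifying that the limits $z_{\varepsilon_j} \in e^{(j)} \M e^{(j)}$ are genuinely compatible as $\varepsilon_j \to 0$; this is routine once the projections are chosen nested, but it is what legitimates speaking of a single limit $y$ in the ambient algebra of $\tau$-measurable operators.
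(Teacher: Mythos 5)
First, a point of comparison: the paper does not prove this lemma at all --- it is imported verbatim from \cite{CL21}, so there is no internal proof to measure your attempt against. Your b.a.u.\ argument for $1\le p<2$ is the standard noncommutative Banach principle and is sound in outline: split $x-x_k$ into four positive parts, apply the weak type $(p,p)$ inequality at levels $\lambda_k=2^{-k}$ along a rapidly convergent subsequence, intersect with the projections furnished by the b.a.u.\ convergence of $S_n(x_k)$, and conclude that $(eS_n(x)e)_n$ is $\norm{\cdot}_\infty$-Cauchy. Two places need more care than you give them. (i) The bound $\tau(1_\M-f_k)<\varepsilon/2^{k+2}$ is not what the weak type inequality hands you; it gives $\lesssim C^p\norm{x-x_k}_p^p/\lambda_k^p$, so either the subsequence must be extracted after $\varepsilon$ is fixed or the construction must start at some $k_0(\varepsilon)$ --- harmless, but it should be said. (ii) The final ``gluing'' is precisely the assertion that $\tau$-measurable operators are complete for bilaterally almost uniformly Cauchy sequences (b.a.u.\ Cauchy implies Cauchy in measure, take the limit in $L_0(\M,\tau)$, then upgrade the mode of convergence using the Cauchy estimates). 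This is a genuine theorem, not a routine nesting of the partial limits $z_{\varepsilon_j}$, and it is one of the actual ingredients of the cited result; invoking it as a black box is acceptable, but it should be named as such.

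The genuine gap is the a.u.\ case for $p\ge2$, which you yourself call ``the hard part'' and then only gesture at. The mechanism you propose --- pass to $|S_n(x)|^2\in L_{p/2}(\M)$ and use Cauchy--Schwarz to trade $f_kS_n(\cdot)f_k$ for $S_n(\cdot)f_k$ --- does not go through under the stated hypotheses. The identity $\norm{S_n(y)e}_\infty^2=\norm{e|S_n(y)|^2e}_\infty$ does convert one-sided control of $S_n(y)$ into bilateral control of $|S_n(y)|^2$, but the weak type hypothesis controls $eS_n(z)e$ for positive $z\in L_p$, not $e|S_n(y)|^2e$. To bridge the two you would need a Kadison--Schwarz inequality of the form $|S_n(y)|^2\le S_n(|y|^2)$ --- which holds for the unital completely positive averages $A_n$ used elsewhere in the paper, but not for an arbitrary positive linear map on $L_p(\M)$ --- together with weak type $(p/2,p/2)$ control of the family on $L_{p/2}$, which is not among the hypotheses either. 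So as written the a.u.\ half is a statement of intent rather than a proof; the passage from bilateral to one-sided control for $p\ge2$ is exactly the content that \cite{CL21} supplies by a dedicated argument, and it cannot be recovered by the Cauchy--Schwarz heuristic you describe.
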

We denote the fixed point subspace of the  group action $\alpha$ on $L_p(\M)$ by $\F^\alpha_p$, i.e.
$\F^\alpha_p:=\{x\in L_p(\M)\mid \alpha_g(x)=x \,\text{ for all } g\in G\}$. Consider a dense subset of $\F^\alpha_p$ given by $\mathcal{S}:={\rm{Span}}\{x-\alpha_g(x) \mid g\in G, x\in L_p(\M)\cap \M\}$. It is straightforward to check  that 
 $A_n (x)$ converges to $0$ a.u. for any $x\in \mathcal{S}$ and any $1\leq p<\infty$ (see, for instance, the proof of  \cite[Proposition 6.4]{HLW21}). Applying the Lemma above, we obtain the following individual ergodic theorem for unimodular amenable groups. 
\begin{thm}\label{individual ergodic theorem with F_n}
    Let $G$ be a locally compact second countable {unimodular} group with a {$w^\star$-continuous} action $\alpha: G \actson \left( \M, \tau \right)$ that preserves the trace $\tau$. For every two-sided  F{\o}lner sequence of $G$, there exists a subsequence 
    $\setof{F_n}_{n \in \mathbb{N}}$ 
    such that, for every $p \in [1, \infty)$, and for every $x \in L_p(\M)$,
    $$A_n(x)=\frac{1}{\lambda{(F_n)}} \int_{F_n} \alpha_g(x) d\lambda(g) \longrightarrow \mathcal{P}(x) \text{ b.a.u.},$$ 
    where $\mathcal{P}$ is the projection from $L_p(\M)$ onto $\F^\alpha_p$. 
    For $p \!\in\! [2, \infty)$, $A_n(x)$ converges to $\mathcal{P}(x)$ a.u.. 
\end{thm}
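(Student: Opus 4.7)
The strategy is to combine three ingredients: the maximal inequality from Theorem \ref{maximal ergodic theorem with F_n}, the noncommutative Banach principle (Lemma \ref{nc banach principle}), and easy convergence on a dense subspace. By Theorem \ref{maximal ergodic theorem with F_n} the averages $(A_n)_{n\in\mathbb{N}}$ are of strong type $(p,p)$ for $p\in(1,\infty)$ and of weak type $(1,1)$; in particular they are of weak type $(p,p)$ for every $p\in[1,\infty)$. Hence by Lemma \ref{nc banach principle} the set
$$\mathcal{C}_p:=\bigl\{x\in L_p(\M)\mid A_n(x)\text{ converges b.a.u. (resp. a.u. for }p\geq 2)\bigr\}$$
is norm-closed in $L_p(\M)$, so it suffices to show that $\mathcal{C}_p$ contains a norm-dense subspace.

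I would then handle the two pieces of the ergodic splitting separately. For $x\in\F^\alpha_p$ the averages act as the identity, so $A_n(x)=x=\mathcal{P}(x)$ trivially; hence $\F^\alpha_p\subseteq\mathcal{C}_p$. For a generator $y-\alpha_h(y)$ of $\mathcal{S}$ with $y\in L_p(\M)\cap\M$ and $h\in G$, a change of variables using unimodularity of $G$ yields
\begin{align*}
A_n\bigl(y-\alpha_h(y)\bigr)=\frac{1}{\lambda(F_n)}\left(\int_{F_n}\alpha_g(y)\,d\lambda(g)-\int_{F_n h}\alpha_g(y)\,d\lambda(g)\right),
\end{align*}
whose operator norm is at most $\|y\|_\infty\,\lambda(F_n\triangle F_n h)/\lambda(F_n)$. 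The two-sided F{\o}lner property (retained by the subsequence produced in Theorem \ref{action dominance}) forces this to tend to zero, and operator-norm convergence to $0$ implies a.u. convergence to $0$. Thus $\mathcal{S}\subseteq\mathcal{C}_p$, and in particular $\mathcal{P}$ vanishes on $\mathcal{S}$.

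The only nontrivial step is to verify that $\F^\alpha_p+\mathcal{S}$ is norm-dense in $L_p(\M)$, equivalently that $\overline{\mathcal{S}}=\ker\mathcal{P}$. For $p\in(1,\infty)$ this follows from Hahn--Banach together with the duality $L_p(\M)^*=L_q(\M)$: any functional in $L_q(\M)$ annihilating $\mathcal{S}$ is automatically $\alpha$-invariant and therefore lies in $\F^\alpha_q$, hence annihilates $\ker\mathcal{P}$. The case $p=1$ is treated by the analogous dual argument using the predual pairing with $\M$ and the norm-density of $L_1(\M)\cap\M$ in $L_1(\M)$. Granted this density, every $x\in L_p(\M)$ is a norm limit of elements of $\F^\alpha_p+\mathcal{S}\subseteq\mathcal{C}_p$, and closedness of $\mathcal{C}_p$ yields $x\in\mathcal{C}_p$; the limit must coincide with $\mathcal{P}(x)$ because $A_n(x)\to \mathcal{P}(x)$ already in $L_p$-norm via the same decomposition argument. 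The main obstacle is cleanly identifying $\overline{\mathcal{S}}$ with $\ker\mathcal{P}$; every other step is routine once Theorem \ref{maximal ergodic theorem with F_n} and Lemma \ref{nc banach principle} are in hand.
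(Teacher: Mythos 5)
Your proposal is correct and follows essentially the same route as the paper: the maximal inequality of Theorem \ref{maximal ergodic theorem with F_n}, the noncommutative Banach principle (Lemma \ref{nc banach principle}), and trivial/a.u. convergence on the dense subspace $\F^\alpha_p+\mathcal{S}$, with the density of $\overline{\mathcal{S}}$ in $\ker\mathcal{P}$ handled by the standard duality argument. The only difference is that you spell out the F{\o}lner estimate on $\mathcal{S}$ and the Hahn--Banach density step explicitly, where the paper cites \cite[Proposition 6.4]{HLW21} and leaves the density implicit.
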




\subsection{A direct proof of Individual Ergodic Theorems} \label{subsection: direct proof}
In this subsection, we will give a proof to Theorem \ref{individual ergodic theorem with F_n} without going through the maximal inequality for amenable groups. We will show the individual Ergodic Theorems for unimodular amenable groups directly from that for $\mathbb{Z}$. Oseledts used martingale techniques to show in \cite{Os65} that for actions on probability spaces, iterates of the Markov operator converge to the conditional expectation onto the subspace of invariant functions. We prove similar results in a more general setting here using basic functional analytic tools.

Recall that $\alpha: G \curvearrowright \left( \M, \tau \right)$ is a {$w^\star$-continuous} trace-preserving action on a von Neumann algebra with a s.f.n. trace. 
The Markov operator on $\M$ associated to $\alpha$ and a probability measure $\omega$ is defined as $T(x)= \int_G \alpha_g(x) d\omega(g)$ for all $x \in\M$. For simplicity, we also denote its extension to $L_p(\M)$ by $T$. 
Recall that $\F^T_p:=\{x\in L_p(\M)\mid T(x)=x \}$ and $\F^\alpha_p:=\{x\in L_p(\M)\mid \alpha_g(x)=x \,\text{ for all } g\in G\}$. Now we show that these two spaces coincide by showing that the $\omega$ defined in \eqref{eq: def of w} is a nondegenerate probability measure. 

\begin{lem}\label{projectionsagree}
    If the support of $\omega$ generates $G$ as a group, then we have $\F_p^T \!=\!\F_p^\alpha$ for all $p\! \in \! [1, \infty)$.
\end{lem}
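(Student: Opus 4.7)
My plan is to first dispose of the trivial inclusion $\F_p^\alpha \subseteq \F_p^T$: if $\alpha_g(x)=x$ for every $g$, then $T(x)=\int_G \alpha_g(x)\,d\omega(g)=\int_G x\,d\omega(g)=x$ since $\omega$ is a probability measure. For the reverse inclusion, fix $x\in\F_p^T$ and note that $x=\int_G \alpha_g(x)\,d\omega(g)$ realises $x$ as a Bochner integral in $L_p(\M)$, with $\|\alpha_g(x)\|_p=\|x\|_p$ for every $g$ since each $\alpha_g$ is a $\tau$-preserving automorphism and hence an $L_p$-isometry. The strategy is to first show that $\alpha_g(x)=x$ for $\omega$-a.e.\ $g$, and then upgrade this to every $g\in G$ using the generation hypothesis.

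For $p\in(1,\infty)$ I would invoke the uniform (in particular strict) convexity of noncommutative $L_p(\M)$. In any strictly convex Banach space, if an element is realised as the barycenter of a probability distribution supported on its norm-sphere, that distribution must concentrate at the element; applied to the pushforward of $\omega$ under $g\mapsto\alpha_g(x)$, this forces $\alpha_g(x)=x$ for $\omega$-a.e.\ $g$. The case $p=2$ admits a self-contained Cauchy--Schwarz proof, expanding $\|x\|_2^2=\langle T(x),x\rangle = \int_G \langle\alpha_g(x),x\rangle\,d\omega(g)$ and using the equality case.

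The case $p=1$ is more delicate since $L_1(\M)$ is not strictly convex; I would reduce it to the $p=2$ case via the ergodic projection. The operator $T$ restricts to a positive contraction on each $L_p(\M)$, $p\in[1,\infty)$, and the Ces\`aro means give a positive contractive projection $\mathcal{P}_T\colon L_p(\M)\to\F_p^T$; on $L_1\cap L_2$ the $L_1$- and $L_2$-projections agree (both equal the common Ces\`aro limit). Given $x\in\F_1^T$, decompose into self-adjoint parts (using that $T$ commutes with taking adjoints) and truncate via bounded spectral projections to obtain a sequence $x_n\in L_1(\M)\cap L_2(\M)$ with $x_n\to x$ in $L_1$. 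Then $\mathcal{P}_T(x_n)\in\F_2^T=\F_2^\alpha$ by the previous paragraph, so $\mathcal{P}_T(x_n)\in\F_1^\alpha$; and by $L_1$-contractivity of $\mathcal{P}_T$, $\mathcal{P}_T(x_n)\to\mathcal{P}_T(x)=x$ in $L_1$. Closedness of $\F_1^\alpha$ in $L_1$ then yields $x\in\F_1^\alpha$.

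Finally, to pass from $\omega$-a.e.\ to every $g$, let $H_x:=\{g\in G:\alpha_g(x)=x\}$. The $w^\star$-continuity of $\alpha$ combined with density of $L_p\cap\M$ in $L_p(\M)$ implies that $g\mapsto\alpha_g(y)$ is $L_p$-norm continuous for every $y\in L_p(\M)$, so $H_x$ is closed in $G$; it is a subgroup by the group law $\alpha_{gh}=\alpha_g\alpha_h$. Since $H_x$ is closed and has full $\omega$-measure, $\supp(\omega)\subseteq H_x$, and the hypothesis that $\supp(\omega)$ generates $G$ forces $H_x=G$, giving $x\in\F_p^\alpha$. The main obstacle I anticipate is the $p=1$ case, where strict convexity is unavailable and one must carefully execute the reduction through $L_2$ via the compatible ergodic projections.
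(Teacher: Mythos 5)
Your proof is correct, but it takes a genuinely different route from the paper's. The paper disposes of the converse inclusion for all $p$ at once with a single trace identity: for $x\in\F_p^T$ one also has $x^*\in\F_p^T$, so
$$T(x^*x)-x^*x \;=\; T(x^*x)-T(x^*)x-x^*T(x)+x^*x \;=\; \int_G \abs{\alpha_g(x)-x}^2\,d\omega(g)\;\geq\;0,$$
and since $\tau\circ T=\tau$ and $\tau$ is faithful the left-hand side vanishes; hence $\alpha_g(x)=x$ for $\omega$-a.e.\ $g$, and the identical closed-subgroup/generation argument finishes. Your strict-convexity argument for $p\in(1,\infty)$ is a valid substitute (a norm-one functional norming $x$ must norm $\alpha_g(x)$ for $\omega$-a.e.\ $g$, and in a strictly convex space it norms at most one point of the sphere), but it requires the uniform convexity of noncommutative $L_p$ as an input and forces a separate, heavier treatment of $p=1$. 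That $p=1$ reduction is the one place to be careful: the contractive ergodic projection $\mathcal{P}_T$ on $L_1(\M)$, compatible with the one on $L_2(\M)$, is not automatic ($L_1$ is non-reflexive and $\M$ is only semifinite, so the mean ergodic theorem does not come for free); it does hold for operators satisfying (T1)--(T3) and is implicit in Corollary \ref{JungeXuIntegers} via Yeadon's theorem, but it should be cited rather than asserted. Conversely, the paper's computation silently assumes $\tau(x^*x)<\infty$, i.e.\ it is cleanest for $x\in L_2$ and in full generality needs a truncation of exactly the kind you carry out --- so neither route is entirely free of fine print. Your final step (the fixed-point set $H_x$ is a closed subgroup containing $\supp(\omega)$, hence all of $G$) is precisely the paper's ``standard continuity argument'' made explicit.
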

\begin{proof}
     Clearly $\F^\alpha_p \subset \F_p^T$ for all $p$; it remains to show the converse inclusion.   
     Let $x \in \F^T_p$. Then we have $x^* \in \F^T_p $. Note that we have $$T(x^* x) - x^* x = T(x^* x) -T(x^*) x - x^* T(x) + x^* x = \int_G \abs{ \alpha_g(x) - x } ^2 d\omega(g)\geq 0.$$
Since $\tau \circ T \! = \! \tau$, we have that $\tau \!\left( T(x^* x) \!-\! x^* x \right) \! = \! 0$.
   Since $\tau$ is faithful, this implies $T(x^* x) - x^* x = 0$.
    This, in turn, implies $\int_G \abs{ \alpha_g(x) - x } ^2 d\omega(g) = 0$ and so $\alpha_g(x) - x = 0$ for almost every $g \in \supp(\omega)$. Since $\supp(\omega)$ generates $G$ as a group, by a standard continuity argument, we conclude that $\alpha_g(x)=x$ for every $g \in G$, thus $x \in \F^\alpha_p$. 
\end{proof}

\begin{lem}\label{Full Support}
    The group generated by the support of $\omega$ defined in \eqref{eq: def of w} is $G$.
\end{lem}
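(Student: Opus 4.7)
The plan is to establish the stronger assertion that $\supp(\omega) = G$, from which Lemma \ref{Full Support} follows trivially, since a set that is already equal to $G$ certainly generates $G$ as a group.

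The first step is to verify by induction that $F_n \subset E_n$ for every $n \in \mathbb{N}$. Recall from the reduction preceding Theorem \ref{action dominance} that we may assume each $F_n$ is symmetric and contains the identity $e$. The base case $E_1 = F_1 \ni e$ is immediate. For the inductive step, if $e \in E_{n-1}$, then $e \in E_{n-1}^{N(n)-2}$, and the recursive definition
\[
E_n = \bigl(E_{n-1}^{N(n)-2}\bigr)^{-1} F_n \bigl(E_{n-1}^{N(n)-2}\bigr)^{-1}
\]
immediately yields $F_n \subset E_n$, so in particular $e \in E_n$, closing the induction.

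Next, by Proposition \ref{Folner sequences locally compact}, we may assume the original two-sided F{\o}lner sequence is increasing and satisfies $\bigcup_n F_n = G$; the subsequence extracted in the proof of Theorem \ref{action dominance} (denoted $\{F_{n_k}\}$ there, but renamed $\{F_n\}$ afterwards) still satisfies $\bigcup_n F_n = G$ because $n_k \to \infty$. Combined with the inclusion from the first step, this gives $\bigcup_n E_n = G$.

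For the final step I would argue by contradiction. Suppose $U := G \setminus \supp(\omega)$ is non-empty. Then $U$ is open, and since every non-empty open subset of a locally compact group has positive Haar measure, $\lambda(U) > 0$. Using countable additivity together with $G = \bigcup_n E_n$, there exists some $n$ with $\lambda(U \cap E_n) > 0$. Since
\[
\frac{d\omega}{d\lambda} \geq t_n \frac{\chi_{E_n}}{\lambda(E_n)}
\]
with $t_n > 0$, this forces
\[
\omega(U) \geq t_n \frac{\lambda(U \cap E_n)}{\lambda(E_n)} > 0,
\]
contradicting $\omega(\supp(\omega)^c) = 0$. Hence $\supp(\omega) = G$.

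There is no serious obstacle: the whole argument reduces to the inductive inclusion $F_n \subset E_n$, to the standard observation that non-empty open sets in a locally compact group have positive Haar measure, and to the covering property $\bigcup_n F_n = G$. The mildly delicate point, which is more a matter of bookkeeping than a difficulty, is confirming that passing to the subsequence in the proof of Theorem \ref{action dominance} preserves the covering property of the F{\o}lner sequence.
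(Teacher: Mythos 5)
There is a genuine gap in your second step. You claim that ``by Proposition \ref{Folner sequences locally compact} we may assume the original two-sided F{\o}lner sequence is increasing and satisfies $\bigcup_n F_n = G$.'' But that proposition only asserts the \emph{existence} of one particular (left) F{\o}lner sequence with the nesting and covering properties; Theorem \ref{action dominance} and hence Lemma \ref{Full Support} are stated for an \emph{arbitrary} two-sided F{\o}lner sequence, whose existence comes from Proposition \ref{biinvariant Folner sets existence}, which guarantees neither $F_n \subset F_{n+1}$ nor $\bigcup_n F_n = G$. A F{\o}lner sequence need not cover the group (e.g.\ $F_n = \llbracket n^3, n^3+n\rrbracket$ in $\mathbb{Z}$, suitably symmetrised), and since $\omega$ is built from the given sequence, you are not free to replace it by a nicer one ``without loss of generality.'' Your final contradiction argument needs $G = \bigcup_n E_n$, which you only obtain from the unjustified covering hypothesis, so the proof does not go through as written. (Your first step, $F_n \subset E_n$, is correct and is also used in the paper.)

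The missing ingredient is the F{\o}lner property itself, which your argument never invokes. The paper proves only the weaker (and sufficient) statement that the \emph{group generated by} $\bigcup_n F_n$ is $G$: if some nonempty compact set $E$ (e.g.\ a singleton $\{g\}$) were disjoint from $\langle \bigcup_n F_n\rangle$, then $EF_n \cap F_n = \emptyset$ for every $n$ (an element of the intersection would exhibit $g = kh^{-1}$ with $h,k \in F_n$, putting $g$ in the generated group), whence $\lambda(EF_n \setminus F_n)/\lambda(F_n) \geq 1$ for all $n$, contradicting asymptotic invariance. This route requires no covering property and no monotonicity, only that each $F_n$ lies in $\supp(\omega)$ via $F_n \subset E_n$. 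If you want to salvage your stronger claim that $\supp(\omega) = G$, you would still need an argument of this type: one can check that $E_n \supset E_{n-1}^{2(N(n)-2)}$, so $\bigcup_n E_n$ in fact contains the group generated by $\bigcup_n F_n$, and then the F{\o}lner argument above shows that group is all of $G$ --- but the F{\o}lner property cannot be avoided.
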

\begin{proof}
   By the definition of $\omega$, we have  $\supp(\omega) \supset\cup_{n\in \mathbb{N}} E_n$. 
   By the definition of $E_n$, it is easy to see that $F_n\subset E_n$ for every $n\in \mathbb{N}$. 
   So it suffices to show that the group generated by $\cup_{n\in \mathbb{N}} F_n$, denoted by $\langle \cup_{n\in \mathbb{N}} F_n\rangle$ is indeed $G$. 
   Suppose that there exists a nonempty compact set $E\subset G$, 
   such that $E\cap \langle \cup_{n\in \mathbb{N}} F_n\rangle=\emptyset$. 
 We claim that for every $n \in \mathbb{N}$, 
   $EF_n \cap F_n = \emptyset$.
   Indeed, if $EF_n \cap F_n \neq \emptyset$, then 
 there exist 
   $g \in E$ and $h,k \in F_n$ such that $gh=k$, that is, $g = k h^{-1}$,
   which contradicts the assumption that $E\cap \langle \cup_{n\in \mathbb{N}} F_n\rangle=\emptyset$. 
   Thus $EF_n$ and $F_n$  are disjoint, and therefore
    $\frac{\lambda\left(E F_n \backslash F_n \right)}{\lambda\left( F_n \right)} 
   = \frac{\lambda\left(E F_n \right)}{\lambda\left( F_n \right)} 
   \geq 1$ for all $n \in \mathbb{N}$.
   This contradicts the F{\o}lner property of the sequence 
$\setof{F_n }_{n \in \mathbb{N}}$.
\end{proof}
This shows that the fixed point subspaces of $\alpha$ and $T$ agree for our choice of $\omega$. We shall identify $\F_p^T$ with $\F_p^\alpha$  henceforth, and denote them by $\F_p$. Therefore, to show the individual ergodic theorem for $G$, it suffices to work with $x\in \mathcal{F}_p^\perp$.


\begin{proof}[A direct proof of Theorem \ref{individual ergodic theorem with F_n}]
     For any positive $x\in L_p(\M)$, let us denote $$M_{N(n)}(x) := \frac{1}{N(n)}\sum_{j=0}^{N(n)-1} T^j(x).$$
    Theorem \ref{action dominance} gives us that there exists a constant $C >0$ such that
    \begin{align*}
        0 \leq A_n(x) \leq C M_{N(n)}(x).
    \end{align*}
    By Lemmas \ref{projectionsagree} and \ref{Full Support}, it suffices to show that $\lim_{n\rightarrow \infty} A_n(x)= 0$  b.a.u. (resp. a.u.) for any positive $x$ in $\F_p^\perp$ when $p\in [1, \infty)$ (resp. $p\in [2, \infty)$). 
    For such $x$, 
    by the individual ergodic theorem for integer actions for $p \in [1, \infty)$ (proved in \cite{Ye77} for $p=1$ and in \cite[Corollary 6.4]{JX07} for $p \in (1, \infty)$), 
    for any $\varepsilon > 0$, there exists a projection $e \in \P(\M)$ such that $\tau(1_\M - e) \leq \varepsilon$, and $\lim_{n \to \infty} \norm{ e M_{N(n)}(x) e }_\infty = 0$. Multiplying the above inequality by $e$ on both sides, we get $0 \leq e A_n(x) e \leq C e M_{N(n)}(x) e$, and therefore $0 \leq \norm{e A_n(x) e}_\infty \leq C \norm{e M_{N(n)}(x) e}_\infty$. It is then easy to see that $\lim_{n \to \infty} \norm{e A_n(x) e}_\infty = 0$.

    If $p\geq2$, and hence $\frac{p}{2}\geq1$, then we have for any  $x \in \F_p^\perp$, $|x|^2 \in  L_{\frac{p}{2}}$. 
    By the b.a.u. convergence for the case $1\leq p<\infty$, for any $\varepsilon>0$ there exists a projection $e \in \P(\M)$ such that $\tau(1_\M - e) \leq \varepsilon$, and $\lim_{n \to\infty} \norm{ e A_n(|x|^2) e }_\infty = 0$. Applying the Kadison inequality 
    we have
    $$\norm{ A_n(x) e }^2_\infty 
    = \norm{ e|A_n(x)|^2e }_\infty
    \leq  \norm{ e  A_n(|x|^2)  e }_\infty.$$
    The last inequality is easy to see if $x\in \M$; for general $x\in \F_p^\perp$, the inequality follows from a standard spectral truncation argument. Therefore $A_n(x)$ converges to $0$ a.u. for $p \in [2, \infty)$ when $x\in  \F_p^\perp$, which concludes the theorem. 
\end{proof}
\begin{rem}
    The argument in the second paragraph of the proof above is a general one, which shows that the implication -- b.a.u. convergence on $L_p$-spaces for $1\leq p<\infty$ implies a.u. convergence for $p\leq 2$ -- does not require the maximal inequality or the Banach principle. 
\end{rem}

\section{Ergodic Theorems in the Nontracial Setting} 

One may further push our results to noncommutative $L_p$-spaces in the nontracial setting. We will use the notion developed by Haagerup in \cite{Ha79a}. Throughout this section, $\M$ will be a von Neumann algebra equipped with a faithful normal state $\varphi$. 
Consider the antilinear operator $S_\varphi(x)=x^*$ on $L_2(\M, \varphi)$, its polar decomposition is written as $S_\varphi=J_\varphi\Delta_\varphi$. Recall that $\Delta_\varphi$ is the (unbounded)  modular operator such that $\Delta _\varphi^{it}\M\Delta _\varphi^{-it}=\M$ for all $t\in \R$. 
The modular automorphism group associated with $(\M, \varphi)$ is the one parameter automorphism group of $\M$ defined by 
$\sigma_t^\varphi (x)=\Delta _\varphi^{it}x\Delta_\varphi^{-it}$ 
for $x\in \M$ and $t\in \R$. 
Denote the corresponding crossed product by $\N:=\M \rtimes _\sigma\R$. 
There is a dual action $\hat \sigma_t$ of $\R$ on $\N$ such that $\hat \sigma_t(x)=x$ for $x\in \M$ and 
$\hat \sigma_t(\Delta_\varphi^{is})=e^{-ist}\Delta_\varphi^{is}$ for any $s\in \R$. 
On this crossed product, there is a canonical normal semi-finite faithful trace $\tau$ such that $\tau \circ \hat \sigma_t=e^{-t}\tau$. As a normal positive functional on $\M$, $\varphi$ corresponds to a positive element in $\M_*$ denoted by $D$, such that $$\varphi(x) = \tau (Dx)= \tau (xD)\; \textrm{ for all }\; x \in \M_*.$$
For any $p\in [1,\infty]$, an operator $x$ affiliated with  $\M \rtimes _\sigma\R$ is in $L_p(\M, \varphi)$ (the Haagerup noncommutative $L_p$-space) if and only if 
$$\hat \sigma_t (x)=e^{-\frac{t}{p}}x, \;\text{ and } 
\|x\|_p=\tau(|x|^p)^{\frac{1}{p}}, \text{ when } p \text{ is finite}.$$ 
The $L_p$-spaces satisfy the odd relation that $L_p(\M, \varphi) \cap L_q(\M, \varphi)=\{0\}$ if $p\neq q$,
which is a phenomenon that is different from the tracial case. However, $\M$ can be embedded in $L_p(\M,\varphi)$ through
\begin{align*}
 \iota_p:  \M_+ & \rightarrow L_p^+(\M,\varphi) \\
 x & \mapsto D^{\frac{1}{2p}}xD^{\frac{1}{2p}}.
\end{align*}
The symmetric embedding above can also be replaced by an asymmetric embedding, i.e. replacing $D^{\frac{1}{2p}}xD^{\frac{1}{2p}}$  by $D^{\frac{1-\theta}{p}}xD^{\frac{\theta}{p}}$ for any $\theta\in [0,1]$, which we denote by $\iota_{p,\theta}$.
The range of $\iota_{p,\theta}$ is dense in $L_p^+(\M,\varphi)$ (see \cite[Theorem 1.7]{GL95}). The embedding $\iota_{p,\theta}$  extends to $\M$ and we still denote it by $\iota_{p,\theta}$.
The definition of $L_p(\M;\ell_\infty)$ and the characterisation in \eqref{eq: charac for sup norm} extend verbatim to the present setting (see \cite{Ju02} and \cite{JX07}).

Junge and Xu extended their results Proposition \ref{JungeXuMaximalIntegers} and Corollary \ref{JungeXuIntegers} to the nontracial case as well in \cite{JX07}. We begin by stating the following conditions for a linear map $T:\M \to \M$:
 \begin{enumerate}
    \item[(T1')] \label{Tnontracialprops1} $T$ is a contraction on $\M$;
    \item[(T2')] \label{Tnontracialprops2} $T$ is completely positive;
    \item[(T3')]  \label{Tnontracialprops3} $\varphi \circ T \leq \varphi$;
    \item[(T4')]  \label{Tnontracialprops4} $T \circ \sigma_t^\varphi = \sigma_t^\varphi \circ T$ for all $t \in \R$.
\end{enumerate}
It was shown in \cite[Lemma 7.1]{JX07} that for every $T$ satisfying (T1'), (T2'), (T3') and (T4'), the map defined from the dense subspace $D^{\frac{1-\theta}{p}}\M D^{\frac{\theta}{p}}$ of $L_p(\M,\varphi)$ to itself via $\iota_{p,\theta}(x) \mapsto \iota_{p,\theta}(Tx)$ extends to a positive contraction of $L_p(\M,\varphi)$ for $p \in [1, \infty]$ (the extension is independent of $\theta$), which we shall denote by $T$ as well. 
Furthermore, they proved the following maximal inequality:
\begin{lem}[Theorem 7.4 part (i), \cite{JX07}]\label{JungeXuMaximalIntegersNontracial}
     If  $T$ satisfies  (T1'), (T2'), (T3') and (T4'), then for any $p \in (1, \infty)$ and any $x \in L_p(\M)$, 
    $$\norm{\sup_n{}^+ M_{n}(x)}_p \leq C_p \norm{x}_p$$
    where $C_p$ is the same constant as in Proposition \ref{JungeXuMaximalIntegers}.
\end{lem}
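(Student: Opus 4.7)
The plan is to reduce the nontracial maximal inequality to the tracial Proposition \ref{JungeXuMaximalIntegers} via the crossed product structure, using the modular-commuting hypothesis (T4') as the mechanism that makes this reduction work. First I would lift $T$ to the crossed product $\N = \M \rtimes_{\sigma^\varphi} \R$ by defining $\tilde{T}$ to equal $T$ on $\M$ and to fix the canonical unitaries $\lambda_s$ implementing the $\R$-action. Standard crossed product computations, combined with (T1')--(T4'), should show that $\tilde{T}$ is a normal completely positive contraction on $\N$ satisfying $\tau \circ \tilde{T} \leq \tau$ and commuting with the dual action $\hat{\sigma}_t$. In particular, $\tilde{T}$ then satisfies the tracial hypotheses (T1), (T2), (T3) on $(\N, \tau)$.

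Next I would invoke the identification of $L_p(\M, \varphi)^+$ as the positive $\tau$-measurable operators affiliated with $\N$ satisfying $\hat{\sigma}_t(x) = e^{-t/p} x$, with Haagerup norm $\tau(|x|^p)^{1/p}$. Since $\tilde{T}$ commutes with $\hat{\sigma}_t$, it preserves this scaling class, and its restriction to $L_p(\M, \varphi)$ coincides with the extension of $T$ already furnished by \cite[Lemma 7.1]{JX07}. The definition of $L_p(\M; \ell_\infty)$ via factorizations $x_n = a y_n b$ with $a, b \in L_{2p}(\M, \varphi)$, together with the characterization \eqref{eq: charac for sup norm} for positive sequences, transfers verbatim to the Haagerup setting.

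The technical core is then transferring the tracial maximal inequality for $\tilde{T}$ across to the Haagerup $L_p(\M, \varphi)$. The naive route of invoking Proposition \ref{JungeXuMaximalIntegers} on $L_p(\N, \tau)$ directly cannot work, because elements of $L_p(\M, \varphi)$ carry the wrong $\hat{\sigma}_t$-weight to lie in $L_p(\N, \tau)$. I would handle this with a Haagerup-style reduction: approximate $(\M, \varphi)$ by an increasing family of semifinite subalgebras $(\M_\alpha, \tau_\alpha)$, push $T$ down to trace-preserving positive contractions $T_\alpha$ via the relevant conditional expectations, apply the tracial inequality uniformly in $\alpha$ to obtain factorizations $M_n^{T_\alpha}(\iota_\alpha(x)) \leq a_\alpha$ with $\norm{a_\alpha}_p \leq C_p \norm{x}_p$, and pass to the limit, using lower semicontinuity of the relevant suprema to produce a single dominating element $a \in L_p^+(\M, \varphi)$.

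The hard part will be this transfer step: one must verify that the $L_p(\M; \ell_\infty)$ factorizations interact well with the approximating conditional expectations, that the characterization \eqref{eq: charac for sup norm} is stable under the weak limits involved, and that the supremum bound is preserved throughout, all while keeping the constant unchanged. Once this bookkeeping is in place, the Junge-Xu constant $C_p$ from the tracial case carries over unchanged, yielding the claimed inequality.
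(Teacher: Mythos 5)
This lemma is not proved in the paper at all: it is imported verbatim as Theorem 7.4(i) of \cite{JX07}, so there is no internal argument to compare against, and the only fair benchmark is the proof in the cited source. Your sketch does follow the strategy actually used there — extend $T$ to a crossed product using (T2') and (T4'), and reduce the maximal inequality to the tracial Proposition \ref{JungeXuMaximalIntegers} by an approximation with semifinite subalgebras and trace-preserving conditional expectations, with the constant $C_p$ surviving the limit — so the approach is essentially the right one. Two caveats: first, the increasing family of semifinite subalgebras cannot live inside $\M$ itself (a genuinely type III algebra admits no such family with normal conditional expectations exhausting it); in the Haagerup reduction it lives inside the auxiliary crossed product $\M \rtimes_{\sigma^\varphi} G_D$ by the \emph{discrete} group $G_D = \bigcup_n 2^{-n}\Z$, which is a different object from $\N = \M \rtimes_\sigma \R$ used to define the Haagerup $L_p$-spaces, and one must also check that $L_p(\M,\varphi)$ sits complementably in $L_p$ of that reduction algebra. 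Second, the step you flag as the hard part — compatibility of the $L_p(\cdot;\ell_\infty)$-norms with the conditional expectations and with the limit — is precisely the substance of the Junge--Xu argument and remains unexecuted in your sketch, so what you have is an accurate roadmap of the known proof rather than a proof.
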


Now we consider  a locally compact second countable unimodular group $G$  and  an action $\alpha: G \actson \left( \M, \varphi \right)$ 
which satisfies the following conditions:
\begin{enumerate}
     \item[(A1)] For every $g\in G$, $\alpha_g \circ \varphi =\varphi$;
    \item[(A2)] For every $x\in \M$ and every $p\in [1,\infty)$,  the maps $g\mapsto \alpha_g(x)$ and $g\mapsto \iota_p\circ\alpha_g(x)$ are continuous with respect to the $w^*$-topology.
    \item[(A3)]   $\alpha$ commutes with the action of the modular group, that is, for every $g\in G$, $\alpha_g \circ \sigma_t^\varphi = \sigma_t^\varphi \circ \alpha_g$.
\end{enumerate}
Condition (A1) implies this action extends to $L_p(\M,\varphi)$ via $\iota _p(x) \mapsto \iota_p(\alpha_g (x))$ for any $g\in G$.
If we define $T$ as in \eqref{T}, that is, $T(x) = \int_G \alpha_g(x) d\omega(g)$, (A3) implies that condition (T4') holds for $T$.
 Conditions (T1'), (T2') and (T3') are satisfied directly following the same arguments as for (T1), (T2) and (T3).
Moreover, note that the ergodic average dominance \eqref{Ergodic Operator Inequality} in Theorem \ref{action dominance} still holds when the tracial von Neumann algebra is replaced by a nontracial one, since the proof ultimately reduces to \eqref{Measure Comparison Inequality}, which involves only the group $G$. Then applying Lemma \ref{JungeXuMaximalIntegersNontracial}, we may obtain the following maximal inequality for actions by any locally compact second countable unimodular amenable group on type III von Neumann algebras.

\begin{thm}\label{thm: maximal for non-tracial case}
    Let $\left( \M, \varphi \right)$ be a von Neumann algebra $\M$ with a  normal faithful state $\varphi$. Let $G$ be a locally compact second countable unimodular amenable group with an action $\alpha: G \actson \left( \M, \varphi \right)$ satisfying conditions (A1), (A2) and (A3). For every two-sided  F{\o}lner sequence in $G$, there exists a subsequence $\setof{F_n}_{n \in \mathbb{N}}$ such that if we define the averaging operators 
    $$A_n(x) := \frac{1}{\lambda{(F_n)}} \int_{F_n}\alpha_g(x) d\lambda(g),$$
    for $n\in\mathbb{N}$, $x \in L_p(\M,\varphi)$, $p \in [1, \infty)$, 
    the sequence of operators $\left(A_n\right)_{n \in\mathbb{N}}$ is of strong type $(p,p)$ for $p \in (1, \infty)$ and of weak type $(1,1)$.
\end{thm}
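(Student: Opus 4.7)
The plan is to adapt the proof of Theorem \ref{maximal ergodic theorem with F_n} to the Haagerup setting. The key observation, as already remarked just before the theorem statement, is that the ergodic average dominance \eqref{Ergodic Operator Inequality} of Theorem \ref{action dominance} is essentially a group-theoretic fact: its proof reduces to the pointwise measure comparison \eqref{Measure Comparison Inequality} on $G$ combined with Lemma \ref{measure dominance iff}, and both remain valid when $(\M, \tau)$ is replaced by $(\M, \varphi)$. Thus, for the subsequence $\{F_n\}_{n \in \mathbb{N}}$ produced by Theorem \ref{action dominance}, we obtain $A_n(x) \leq C\, M_{N(n)}(x)$ for every $x \in L_p^+(\M, \varphi)$, where $M_n$ denotes the Ces\`aro mean of the Markov operator $T(x) = \int_G \alpha_g(x)\, d\omega(g)$ and $\omega$ is the probability measure defined in \eqref{eq: def of w}.

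The first step is to verify that $T$ meets the hypotheses (T1')-(T4') of Lemma \ref{JungeXuMaximalIntegersNontracial}. Condition (T1') is immediate since $T$ is an integral of the $\alpha_g$ against a probability measure and each $\alpha_g$ is an isometric automorphism of $\M$. Condition (T2') follows because each $\alpha_g$ is a $*$-automorphism and therefore completely positive, and the Bochner integral over a probability measure preserves complete positivity (one checks positivity at the level of each matrix amplification). Condition (T3') is a direct consequence of (A1), and (T4') is precisely (A3), since the modular automorphism can be pulled through the Bochner integral using the $w^*$-continuity provided by (A2). Condition (A1) together with (A2) also ensures that each $\alpha_g$ and hence $T$ extend consistently and isometrically to every $L_p(\M, \varphi)$ via $\iota_{p, \theta}$.

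For the strong type $(p, p)$ statement with $p \in (1, \infty)$, the argument runs exactly as in the proof of Theorem \ref{maximal ergodic theorem with F_n}. Given $x \in L_p^+(\M, \varphi)$, Lemma \ref{JungeXuMaximalIntegersNontracial} together with the positive-sequence characterisation \eqref{eq: charac for sup norm} (which extends verbatim to the Haagerup framework, see \cite{Ju02, JX07}) produces $a \in L_p^+(\M, \varphi)$ such that $M_{N(n)}(x) \leq a$ for every $n$ and $\|a\|_p \leq C_p \|x\|_p$. The dominance then yields $A_n(x) \leq Ca$, so $\|\sup_n^+ A_n(x)\|_p \leq C C_p \|x\|_p$; splitting a general $x \in L_p(\M, \varphi)$ into at most four positive parts gives the desired bound with constant $4 C C_p$.

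The main obstacle is the weak type $(1,1)$ endpoint, which does not follow from Lemma \ref{JungeXuMaximalIntegersNontracial} alone. The plan is to invoke the non-tracial analogue of Yeadon's maximal inequality for integer actions, which asserts that for any $T$ satisfying (T1')-(T4'), any $x \in L_1^+(\M, \varphi)$, and any $\varepsilon > 0$, there is a projection $e$ in the Haagerup crossed product $\N = \M \rtimes_\sigma \mathbb{R}$ with $\tau(1_\N - e) \leq \frac{4}{\varepsilon} \|x\|_1$ and $e M_n(x) e \leq \varepsilon e$ for all $n$. Granted such a statement, multiplying the ergodic average dominance on both sides by $e$ yields $e A_n(x) e \leq C \varepsilon e$ uniformly in $n$, and rescaling $\varepsilon \mapsto \varepsilon / C$ gives the weak $(1,1)$ bound for $(A_n)_{n \in \mathbb{N}}$; the extension from positive to general $x$ is again handled by the four-piece decomposition.
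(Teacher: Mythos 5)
Your proposal follows essentially the same route as the paper: the authors likewise note that the dominance \eqref{Ergodic Operator Inequality} reduces to the purely group-theoretic inequality \eqref{Measure Comparison Inequality} and hence survives the passage to the Haagerup setting, verify (T1')--(T4') for the Markov operator exactly as you do, and deduce the strong type $(p,p)$ bound from Lemma \ref{JungeXuMaximalIntegersNontracial} together with \eqref{eq: charac for sup norm}. The weak type $(1,1)$ endpoint is treated no more explicitly by the paper (whose proof consists of the single sentence that it is ``identical'' to the tracial case) than by you, so your conditional appeal to a nontracial analogue of Yeadon's inequality is, if anything, more transparent about what is being assumed at $p=1$.
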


    The proof of the theorem above is identical to that of Theorem \ref{maximal ergodic theorem with F_n} and follows immediately from Lemma \ref{JungeXuMaximalIntegersNontracial} and \eqref{eq: charac for sup norm}.
    From this, one may obtain individual ergodic theorems with the suitable notion of convergence. Note that for $x_n$, $x\in L_p(\M,\varphi)$ with $p<\infty$, $x_n-x$ is not affiliated with $\M$ but with $\M \rtimes _\sigma\R$, the previous definition of a.u. and b.a.u. is not suitable for the current setting. Instead, we will need the almost sure convergence introduced by Jajte \cite{Ja91}.
     The sequence $(x_n)$ is said to converge bilaterally almost surely (b.a.s.) to $x$ if for every $\varepsilon>0$ there is a projection $e\in \M$ and a family $(a_{n,k})\subset \M$ such that
    $$
    \varphi (e^\perp)<\varepsilon, \; x_n-x=
    \sum_{k\geq 1} D^{\frac{1}{2p}} a_{n,k} D^{\frac{1}{2p}}\; \text{ and }\;\lim_{n\rightarrow\infty}\| \sum_{k\geq 1} (e\,a_{n,k} \,e)\|_{\infty}=0,
    $$
    where the first series converges in the norm of $L_p(\M,\varphi)$, the second in $\M$. The sequence $(x_n)$ is said to converge almost surely (a.s.) to $x$ if for every $\varepsilon>0$ there is a projection $e\in \M$ and a family $(a_{n,k})\subset \M$ such that
    $$
    \varphi (e^\perp)<\varepsilon, \; x_n-x=
    \sum_{k\geq 1}  a_{n,k} D^{\frac{1}{p}}\; \text{ and }\;\lim_{n\rightarrow\infty}\| \sum_{k\geq 1} (\,a_{n,k} \,e)\|_{\infty}=0.
    $$
    Junge and Xu proved the following individual ergodic theorem in the nontracial setting:
    \begin{lem} [Theorem 7.12 part (i), $d=1$, \cite{JX07}]\label{JungeXuIndividualIntegersNontracial}
    If $T$ satisfies (T1'), (T2'), (T3') and (T4'), then for any $p \in (1, \infty)$ and any $x \in L_p(\M,\varphi)$, 
$$M_{n}(x) \longrightarrow \mathcal{P}(x) \text{ b.a.s.},$$ 
    where $\mathcal{P}$ is the bounded projection from $L_p(\M,\varphi)$ onto the subspace of $T$-invariant elements. 
    For $p \in [2, \infty)$, $M_n(x)$ converges to $\mathcal{P}(x)$ a.s.. 
\end{lem}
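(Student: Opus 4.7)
The plan follows the standard pattern for deducing an individual ergodic theorem from a maximal inequality: dense convergence plus equicontinuity. First, I would split $L_p(\M,\varphi) = \F_p \oplus \F_p^{\perp}$, where $\F_p$ is the $T$-fixed subspace. On $\F_p$ the averages act as the identity, so $M_n(x)=x=\mathcal{P}(x)$ trivially. On $\F_p^{\perp}$ the range of $1-T$ is norm dense, and for $y = (1-T)\iota_{p,\theta}(x)$ with $x\in \M$ one has $M_n(y) = \tfrac{1}{n}\bigl(\iota_{p,\theta}(x) - T^n \iota_{p,\theta}(x)\bigr)$, which tends to $0$ in $L_p$-norm. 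Because the factorisation is through $D^{(1-\theta)/p}$ and $D^{\theta/p}$ and $T$ is a contraction on $\M$, one can extract an explicit Jajte series representation $M_n(y) = \sum_k D^{1/2p} a_{n,k} D^{1/2p}$ with $\|\sum_k a_{n,k}\|_\infty \to 0$, giving b.a.s.\ convergence to $0$ on the dense set.

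Second, to extend b.a.s.\ convergence from this dense subspace to all of $L_p(\M,\varphi)$ I would invoke a nontracial analogue of the Banach principle (Lemma \ref{nc banach principle}), with the needed equicontinuity supplied by the maximal inequality of Lemma \ref{JungeXuMaximalIntegersNontracial}. The cleanest route, which is the one taken in \cite{JX07}, is the Haagerup reduction: embed $(\M,\varphi)$ into a semifinite $(\tilde{\M},\tilde{\tau})$ with a compatible trace-preserving conditional expectation, extend $T$ to $\tilde{T}$ on $\tilde{\M}$ while preserving (T1)--(T3), apply the tracial individual ergodic theorem (Corollary \ref{JungeXuIntegers}) to obtain b.a.u.\ convergence of $\tilde{M}_n = \tfrac{1}{n}\sum_{j=0}^{n-1}\tilde{T}^j$, and translate b.a.u.\ convergence in $\tilde{\M}$ back to b.a.s.\ convergence in $\M$ via the standard dictionary between the two notions. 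Condition (T4') is what guarantees that $T$ commutes with the conditional expectation and thus lifts to $\tilde{\M}$.

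Third, the upgrade from b.a.s.\ to a.s.\ for $p \in [2,\infty)$ follows exactly the same mechanism as in Theorem \ref{individual ergodic theorem with F_n}: by complete positivity (T2') the Kadison--Schwarz inequality gives $M_n(x)^{*}M_n(x) \leq M_n(|x|^{2})$, and b.a.s.\ convergence of $M_n(|x|^2)$ in $L_{p/2}(\M,\varphi)$ -- already established in step two -- translates into a one-sided control sufficient for a.s.\ convergence. The main obstacle is the Haagerup reduction and the careful matching between b.a.u.\ in the semifinite extension and b.a.s.\ in $\M$; individually these steps are routine, but together they require meticulous bookkeeping involving the density $D$ and the modular group $\sigma^{\varphi}_t$. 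Once these pieces are in place, the proof runs parallel to the tracial argument of Corollary \ref{JungeXuIntegers}.
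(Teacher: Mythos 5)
This lemma is imported verbatim from \cite{JX07} (Theorem 7.12(i)) and the paper offers no proof of its own, so there is no internal argument to compare against; your outline --- telescoping on the dense range of $1-T$, extension via the maximal inequality of Lemma \ref{JungeXuMaximalIntegersNontracial} or the Haagerup reduction to the tracial case (with (T4') guaranteeing that $T$ lifts), and Kadison--Schwarz for the a.s.\ upgrade --- is consistent with the strategy of the cited proof. The one point worth flagging is the endpoint $p=2$: the squaring trick places $|x|^2$ in $L_{1}(\M,\varphi)$, which lies outside the range $p\in(1,\infty)$ covered by the b.a.s.\ statement, so you must additionally invoke Jajte's $L_1$ result (\cite[Theorem 2.2.12]{Ja91}), exactly as the paper does in the remark immediately following the lemma.
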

The b.a.s. convergence of $M_n(x)$ for $p=1$ is also true (see \cite[Theorem 2.2.12]{Ja91}). 
This leads us to individual ergodic theorems in the nontracial case, which we state next. For completeness, we will provide the proof, adapted from the direct argument for the individual ergodic theorem in Subsection \ref{subsection: direct proof}, without using the maximal inequality in Theorem \ref{thm: maximal for non-tracial case}.
\begin{thm}
   Under the same assumptions as in the previous theorem, for every two-sided  F{\o}lner sequence in $G$, there exists a subsequence $\setof{F_n}_{n \in \mathbb{N}}$ of $G$, such that, for every $p \in [1, \infty)$, and for every $x \in L_p(\M,\varphi)$,
    $$A_n(x) := \frac{1}{\lambda{(F_n)}} \int_{F_n}\alpha_g(x) d\lambda(g) \longrightarrow \mathcal{P}(x) \text{ b.a.s.},$$
    where $\mathcal{P}$ is the projection from $L_p(\M, \varphi)$ onto $\F_p:=\{x \!\in\! L_p(\M, \varphi)\mid \alpha_g(x)\!=\!x \,\text{ for all } g\in G\}$.
   For $p \in [2, \infty)$, $A_n(x)$ converges to $\mathcal{P}(x)$ a.s.. 
\end{thm}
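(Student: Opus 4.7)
The plan is to adapt the direct argument of Subsection \ref{subsection: direct proof} to the nontracial setting, leveraging the embeddings $\iota_{p,\theta}$ to translate between the $L_p$-picture and the $\M$-picture. First, let $\omega$ be the probability measure constructed in \eqref{eq: def of w} and set $T(x) = \int_G \alpha_g(x)\, d\omega(g)$. Conditions (T1') and (T2') follow from $\alpha_g$ being $*$-automorphisms and $\omega$ a probability measure; (T3') holds in the sharper form $\varphi \circ T = \varphi$ by (A1); and (T4') is immediate from (A3). Hence $T$ extends to a positive contraction on each $L_p(\M, \varphi)$ that commutes with every $\iota_{p,\theta}$.

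Next, I will show $\F^T_p = \F^\alpha_p =: \F_p$ by adapting Lemmas \ref{projectionsagree} and \ref{Full Support}. For $T$-fixed $y \in \M$, the identity $T(y^*y) - y^*y = \int_G |\alpha_g(y) - y|^2\, d\omega(g) \geq 0$, combined with $\varphi \circ T = \varphi$ and the faithfulness of $\varphi$, forces $\alpha_g(y) = y$ for $\omega$-a.e.\ $g$; Lemma \ref{Full Support}, which depends only on the F{\o}lner property of $\{F_n\}$, propagates this to all $g \in G$, and the identification extends to $L_p$ by density of $\iota_{p,\theta}(\M)$. The ergodic average dominance of Theorem \ref{action dominance} carries over verbatim to the nontracial setting, since its proof reduces to the pointwise comparison \eqref{Measure Comparison Inequality} on $G$, which is independent of the algebra. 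Hence $0 \leq A_n(x) \leq C M_{N(n)}(x)$ in $L_p^+(\M, \varphi)$ for every $x \in L_p^+(\M, \varphi)$ and large $n$.

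The b.a.s.\ convergence then follows by combining Lemma \ref{JungeXuIndividualIntegersNontracial} (complemented by the $p=1$ case from \cite{Ja91}) with the domination. For $x = \iota_p(z)$ with $z \in \M_+$ and $\mathcal{P}(x) = 0$ (a dense subclass in $L_p^+ \cap \F^\perp_p$), the commutation with $\iota_p$ reduces Jajte's representation to a single term, so the b.a.s.\ convergence $M_{N(n)}(x) \to 0$ is witnessed by a projection $e$ with $\varphi(e^\perp) < \varepsilon$ and $\|e M_{N(n)}(z) e\|_\infty \to 0$. Applying $e(\cdot)e$ to the $\M$-level domination $0 \leq A_n(z) \leq C M_{N(n)}(z)$ yields $\|e A_n(z) e\|_\infty \to 0$, and thus $A_n(x) \to 0$ b.a.s. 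Extending to all $x \in L_p(\M, \varphi)$ proceeds by a standard decomposition into positive parts and a Banach-principle-type closure argument: the weak type $(p,p)$ bound for $A_n$ is inherited from Lemma \ref{JungeXuMaximalIntegersNontracial} through the domination, without invoking Theorem \ref{thm: maximal for non-tracial case}. The upgrade from b.a.s.\ to a.s.\ for $p \in [2, \infty)$ then follows the tracial argument of Theorem \ref{individual ergodic theorem with F_n} via the Kadison-Schwarz inequality $|A_n(x) - \mathcal{P}(x)|^2 \leq A_n(|x - \mathcal{P}(x)|^2)$, with $|x - \mathcal{P}(x)|^2 \in L_{p/2}(\M, \varphi)$. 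The main technical obstacle throughout is Jajte's definition of b.a.s./a.s.\ convergence, which is witnessed by a representation involving powers of $D$ rather than by operator-norm truncation by projections; the commutation of $T$, $\alpha_g$, and $A_n$ with $\iota_{p,\theta}$, guaranteed by (A3), is precisely the device that converts the integer-case b.a.s.\ convergence into a sandwich argument at the $\M$-level.
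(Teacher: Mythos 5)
Your overall strategy --- domination by $M_{N(n)}$, the integer-case b.a.s.\ theorem, and identification of the fixed-point subspaces --- matches the paper's, but your extension step takes the alternative route that the paper only mentions in its closing remark rather than the one it actually carries out. The paper's proof handles a \emph{general} positive $x$ directly: it writes $A_n(x)=\sum_{k}D^{\frac{1}{2p}}a_{n,k}D^{\frac{1}{2p}}$ and $M_{N(n)}(x)=\sum_k D^{\frac{1}{2p}}b_{N(n),k}D^{\frac{1}{2p}}$ with the representations chosen compatibly so that $a_{n,k}\leq C\,b_{N(n),k}$, and then transfers the b.a.s.\ witness for $M_{N(n)}(x)$ term by term to $A_n(x)$; no Banach principle is invoked, which is the point of the ``direct'' proof. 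You instead prove convergence on the dense subclass $\iota_p(\M_+)$, where the Jajte representation collapses to a single term, and close up via a Banach-principle argument fed by the weak type bound inherited from the domination. For $p\geq 2$ both arguments use Kadison's inequality, but the paper is careful to run it on the approximants $x_m=\sum_{k\leq m}x_kD^{1/p}\in\M D^{1/p}$ and pass to the limit in $L_{p/2}$; you should not skip this, since $x-\mathcal{P}(x)$ is only affiliated with the crossed product and Kadison's inequality is applied at the level of $\M$.

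Two points in your closure step need attention. First, for $p=1$ the weak type $(1,1)$ bound is not supplied by Lemma \ref{JungeXuMaximalIntegersNontracial}, which covers only $p\in(1,\infty)$, and the nontracial adaptation of the Banach principle (Lemma \ref{nc banach principle}) for b.a.s./a.s.\ convergence is asserted but not proved in the paper; if you rely on either, you must say what is being assumed. Second, your density claim is off: positive elements with $\mathcal{P}(x)=0$ do not span $\F_p^\perp$, so the closure argument has to be run on all of $L_p(\M,\varphi)$ using the density of $\F_p+\mathrm{Span}\{x-\alpha_g(x)\}$, not on $L_p^+\cap\F_p^\perp$. Relatedly, your assertion that the b.a.s.\ convergence of $M_{N(n)}(\iota_p(z))$ is ``witnessed by'' the single-term representation $\norm{eM_{N(n)}(z)e}_\infty\to 0$ does not follow from the definition of b.a.s.\ convergence alone, which only guarantees the existence of \emph{some} witnessing family; one must check that the Junge--Xu argument actually produces this particular witness on $\iota_p(\M_+)$. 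This is exactly the compatibility of representations that the paper arranges by hand via the choice $a_{n,k}\leq C\,b_{N(n),k}$, and it is the one genuinely nontrivial point your proposal leaves implicit.
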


\begin{proof}
We still have the decomposition $L_p(\M,\varphi)=\F_p\oplus \F_p^\perp$. It suffices to prove the result for positive $x\in \F_p^\perp$ (Lemmas \ref{projectionsagree} and \ref{Full Support} are still valid when we consider type III von Neumann algebras).
By Theorem \ref{action dominance} there exists a constant $C >0$ such that
    \begin{align*}
        0 \leq A_n(x) \leq C M_{N(n)}(x).
    \end{align*}
By the density of $D^{\frac{1}{2p}}\M_+ D^{\frac{1}{2p}}$ in $L_p^+(\M,\varphi)$, there are $a_{n,k}$ and $b_{n,k}$ in $\M_+$ such that $A_n(x) = \sum_{k\geq 1} D^{\frac{1}{2p}} a_{n,k} D^{\frac{1}{2p}}$ and $M_n(x) = \sum_{k\geq 1} D^{\frac{1}{2p}} b_{n,k} D^{\frac{1}{2p}}$. Using the fact that $\iota_p$ preserves positivity, $a_{n,k}$ and $b_{n,k}$ can be chosen to satisfy $a_{n,k}\leq C b_{N(n),k}$ for every $n, k\in \mathbb{N}$. Then it is easy to see that $\lim_{n\rightarrow\infty}\| \sum_{k\geq 1} (e\,b_{n,k} \,e)\|_{\infty}=0$ implies  $\lim_{n\rightarrow\infty}\| \sum_{k\geq 1} (e\,a_{n,k} \,e)\|_{\infty}=0$ for any projection $e$ in $\M$. As a consequence, the b.a.s. convergence for $1\leq p<\infty$ then follows from Lemma \ref{JungeXuIndividualIntegersNontracial} and \cite[Theorem 2.2.12]{Ja91}. 

Now consider the case $p\geq 2$. Let $x\in \F_p^\perp$. By the density of $\M D^{\frac{1}{p}}$ in $L_p(\M,\varphi)$, there exist $x_{k}\in \M$ such that $x_m:= \sum _{k=1}^mx_{k}D^{\frac{1}{p}}$ converges to $x$ in the $L_p$-norm. 
Applying the H\"older inequality for Haagerup $L_p$-spaces, one can show that $|x_m|^2$ converges to $|x|^2$ in the $L_{\frac{p}{2}}$-norm. By the fact that $A_n$ is a contraction on $L_q(\M,\varphi)$ for any $q\in [1,\infty)$, it is easy to see that 
$$
A_n(x_m)=A_n(\sum _{k=1}^mx_{k})D^{\frac{1}{p}} \longrightarrow A_n(x)
$$
in the $L_{p}$-norm, and 
$$A_n(|x_m|^2)=D^{\frac{1}{p}} A_n(|\sum _{k=1}^mx_{k}|^2)D^{\frac{1}{p}} \longrightarrow A_n(|x|^2)$$
in the $L_{\frac{p}{2}}$-norm. By the b.a.s. convergence for the case $1\leq p<\infty$, there exists a projection $e\in \M$ such that 
$$
\lim_{m,n\rightarrow\infty}\| eA_n(|\sum _{k=1}^mx_{k}|^2)e\|_\infty=0.
$$
Applying the Kadison inequality for $A_n$ on $\M$, we get $
\lim_{m,n\rightarrow\infty}\| e|A_n(\sum _{k=1}^mx_{k})|^2e\|_\infty= \lim_{m,n\rightarrow\infty}\| A_n(\sum _{k=1}^mx_{k})e\|^2_\infty=0$, which implies the a.s. convergence for $A_n(x)$. 

\end{proof}

Note that the above theorem can also be deduced from Theorem \ref{thm: maximal for non-tracial case} together with the noncommutative Banach principle Lemma \ref{nc banach principle}, adapted to the nontracial case (which remains valid).

\section{Explicit Computations of Averaging Sets}

In this section, we present explicit examples of two-sided F{\o}lner sequences which yield the ergodic average dominance \eqref{Ergodic Operator Inequality} in Theorem \ref{action dominance}. We focus on two classes of groups: groups of polynomial growth, and a prototype example of amenable groups with exponential growth -- the lamplighter group. Since the individual ergodic theorem follows from Theorem \ref{action dominance}, it is natural to compare these F{\o}lner sequences with those satisfying Lindenstrauss’s temperedness condition, which identifies the correct averaging sets for pointwise convergence in amenable group actions.
Recall that a left F{\o}lner sequence $(F_n)_{n\in \mathbb{N}}$ is called tempered if there exists a constant $C>0$ such that $\lambda\left(\cup_{i<n}F_i^{-1}F_n\right)\leq C \lambda\left(F_n\right)$, with the analogous condition for right-F{\o}lner sequences.
The F{\o}lner sequences used to obtain \eqref{Ergodic Operator Inequality} are tempered: each $F_n$ is symmetric and is approximately invariant from both sides under $\left( E_n^{-1} \right)^{N(n)-2}\supset E_n^{-1}$, and satisfies $E_n = E_n^{-1} \supset F_j$ for all $j<n$. Hence $F_n$ is approximately invariant from either side under $\cup_{i<n}F_i^{-1}$, which is precisely Lindenstrauss' temperedness condition.


\subsection{Groups of Polynomial Growth}   
For a group $G$ with polynomial growth of order $d$, generated by a symmetric compact set $B$, our averaging sets $F_n$ grow super-exponentially. 
Let  $F_n = B^{l(n)}$ where $l: \mathbb{N} \to \mathbb{N}$ is a strictly increasing function to be determined. Then, $E_1 = F_1 = B^{l(1)}$, and one may compute using \eqref{eq: def of En} with $N(n) = 2^n$, that 
$E_n = B^{m(n)}$ with $m(n) = l(n) + 2\left( 2^n - 2 \right) m(n-1)$. 
Hence, we have $$\frac{\lambda(F_n)}{\lambda(E_n)} \sim \left(\frac{l(n)}{m(n)}\right)^d = \left(\frac{1}{1 + 2 \left( 2^n - 2 \right) \frac{m(n-1)}{l(n)}}\right)^d.$$
    For this to be strictly positive in the limit, 
    polynomial or exponential form for $l(n)$ is not good enough, we need to consider at least super-exponential growth. One may verify that $$m(n) = l(n) + \sum_{j=1}^{n-1} l(j) \prod_{k=j}^{n-1} 2 \left( 2^{k+1} - 2 \right) = l(n) + \sum_{j=1}^{n-1} l(j) 2^{\frac{n^2 - j^2 + 3n -3j -4}{2}}\prod_{k=j}^{n-1}\left( 1 - \frac{1}{2^k} \right).$$
    With this, we have 
    $2 \! \left( 2^n - 2 \right) \! \frac{m(n-1)}{l(n)} 
    = 2(2^n - 2)\!\left( \! \frac{l(n-1)}{l(n)} + \sum_{j=1}^{n-2} \frac{l(j)}{l(n)}  2^{\frac{n^2 - j^2 + n -3j -6}{2}}\prod_{k=j}^{n-2}\left( 1 - \frac{1}{2^k} \right) \! \right)$. For this to be uniformly bounded, we need $\frac{l(n-1)}{l(n)} \sim 2^{-n}$ or less. So, a good candidate is $l(n)= 2^{n^2}$. With this choice of $l(n)$, we get
    \begin{eqnarray*}
   2 \left( 2^n - 2 \right) \frac{m(n-1)}{l(n)}& = &
     2(2^n - 2)\left( 2^{-2n + 1} + \sum_{j=1}^{n-2} 2^{\frac{-n^2 + j^2 + n -3j -6}{2}}\prod_{k=j}^{n-2}\left( 1 - \frac{1}{2^k} \right) \right)  \\
         & \leq & 2(2^n - 2)\left( 2^{-2n + 1} + \sum_{j=1}^{n-2} 2^{\frac{-n^2 + j^2 + n -3j -6}{2}} \right)\\
         & \leq &2(2^n - 2)\left( 2^{-2n + 1} + \left( n-2 \right) 2^{-3n+2} \right) ,
    \end{eqnarray*}
    which goes to $0$ for $n \to \infty$. 
    Hence, we have $\lim_{n\rightarrow\infty}\frac{\lambda(F_n)}{\lambda(E_n)} \sim 1$. Therefore, $F_n=B^{2^{n^2}}$'s are good averaging sets to give the ergodic average dominance \eqref{Ergodic Operator Inequality}. 
     
     Now we consider individual ergodic theorems. 
     It is known that $F_n=B^{n}$'s are already enough to  satisfy Lindenstrauss' temperedness condition (observe that $F_n=B^n$ yields $\lambda\left(\cup_{i<n}F_i^{-1}F_n\right) = \lambda(F_{n-1} F_n) = \lambda(B^{2n-1}) \lesssim 2^d \lambda(F_n)$), while our chosen sequence $F_n$ grows much faster than needed in this setting.


\subsection{The Lamplighter Group}


Let us recall some of the basics of the lamplighter group: $\mathscr{L} = \Z\wr \Z_2$, the restricted wreath product of $\Z$ and the cyclic group $\Z_2$, modelling a set of countably many lamps on a bi-infinite street, and a lamplighter. 
It is the semi-direct product of $\Z\ltimes\left(\bigoplus_{\Z}\Z_2\right)$ with the 
     group operation defined as  $\left(t_1, K_1 \right) \cdot \left(t_2, K_2 \right) = \left(t_1 + t_2, K_1 \triangle(t_1 + K_2) \right)$. For any group element $\left(t, K \right)$,  $t\in \Z$ denotes the position of the lamplighter, and $K \subset \Z$ is a finite subset which denotes the set of lamps turned on. 
    It is easy to see that the group identity $e = (0, \emptyset)$, and $(t,K)^{-1}=\left( - t , - t + K \right)$.
   We shall use $\interval{n_1 , n_2}$ to denote $\left[n_1,n_2\right] \cap \Z$. All limits are for $n \to \infty$ unless specified otherwise. 

   A standard F{\o}lner sequence on $\mathscr{L}$ is given by 
   \begin{equation}\label{Lamplighter Right Folner}
    {\widetilde F}_n := \setof{\left(t, K \right) \mid t \in \interval{ 0 , n } \supset K} \; \text{ for any } n \in \mathbb{N}.   
   \end{equation} 
   It is well-known in the community that this produces a right-F{\o}lner sequence. For completeness, we provide a proof via computations which  set the tone for the rest of the subsection.
    \begin{lem}\label{lem: Lamplighter Right Folner}
        The sequence $\setof{{\widetilde F}_n}_{n \in \mathbb{N}}$ is a  right-F{\o}lner sequence of $\mathscr{L}$.
    \end{lem}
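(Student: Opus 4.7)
The group $\mathscr{L}$ is discrete, so $\lambda$ is counting measure and it suffices to verify the right-F\o lner invariance against an arbitrary singleton $\{(s,L)\}$, since any compact (finite) subset is a finite union of singletons and the symmetric difference is subadditive. Concretely, I will show
\[
\lim_{n\to\infty}\frac{|\widetilde F_n(s,L)\cap \widetilde F_n|}{|\widetilde F_n|}=1,
\]
which implies the usual right-F\o lner condition.

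First I will compute $|\widetilde F_n|$: for each of the $n+1$ positions $t\in\interval{0,n}$ there are $2^{n+1}$ choices of $K\subset\interval{0,n}$, so $|\widetilde F_n|=(n+1)2^{n+1}$. Next, using the product rule recalled in the text, $(t,K)\cdot(s,L)=(t+s,K\triangle(t+L))$, so $(t,K)(s,L)\in\widetilde F_n$ iff
\[
t+s\in\interval{0,n}\quad\text{and}\quad K\triangle(t+L)\subset\interval{0,n}.
\]
Set $M:=\max_{\ell\in L}|\ell|$ (with the convention $M=0$ if $L=\emptyset$) and $M':=\max(M,|s|)$. I will verify that for every $t\in\interval{M',n-M'}$ the set $t+L$ is contained in $\interval{0,n}$, whence $K\triangle(t+L)\subset\interval{0,n}$ for every $K\subset\interval{0,n}$; moreover $t+s\in\interval{0,n}$. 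Hence for all such $t$ and all $K\subset\interval{0,n}$, the element $(t,K)(s,L)$ lies in $\widetilde F_n$.

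Since right multiplication by $(s,L)$ is a bijection on $\mathscr{L}$, this gives
\[
|\widetilde F_n(s,L)\cap \widetilde F_n|\geq (n-2M'+1)\cdot 2^{n+1}
\]
for all $n\geq 2M'$, and dividing by $|\widetilde F_n|=(n+1)2^{n+1}$ yields a ratio at least $\frac{n-2M'+1}{n+1}$, which tends to $1$. Equivalently $\frac{|\widetilde F_n(s,L)\setminus\widetilde F_n|}{|\widetilde F_n|}\leq\frac{2M'}{n+1}\to 0$, establishing the F\o lner invariance for singletons.

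The argument is essentially bookkeeping; there is no real obstacle once one writes down the product law carefully and isolates the geometric reason $\widetilde F_n$ is invariant: the "lamp" coordinate $K$ already varies over all subsets of $\interval{0,n}$, so perturbing $K$ by any fixed finite translate $t+L$ inside $\interval{0,n}$ stays inside $\widetilde F_n$, while only a boundary layer of width $M'$ in the lamplighter position $t$ is lost. The only minor point to be attentive to is handling the edge cases $L=\emptyset$ or $s=0$, which are covered by the convention on $M$ and by allowing $M'=0$.
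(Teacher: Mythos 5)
Your proof is correct and follows essentially the same route as the paper's: compute $|\widetilde F_n|=(n+1)2^{n+1}$, observe that for $t$ away from a boundary layer of width controlled by the element $(s,L)$ the product $(t,K)(s,L)$ stays in $\widetilde F_n$ for every $K\subset\interval{0,n}$, and count. The only cosmetic differences are that you keep a general $L$ (with $M'=\max(M,|s|)$) where the paper first enlarges $K_1$ to an interval $\interval{-n_1,n_1}$ and uses the layer width $|t_1|+n_1$, and that you make explicit the reduction from finite sets to singletons and the injectivity of right translation, which the paper leaves implicit.
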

    \begin{proof}
        We first observe that $\abs{{\widetilde F}_n} = (n+1)2^{n+1}$. 
       The aim is to prove that for all $t_1\in \Z$ and any finite subset $K_1 \subset \Z$, 
       $\lim \frac{\abs{{\widetilde F}_n \cap {\widetilde F}_n \left(t_1, K_1 \right) }}{\abs{ {\widetilde F}_n }} = 1$. 
       Note  that $${\widetilde F}_n \left(t_1, K_1 \right) = \setof{ \left(t + t_1, K \triangle \left(t+K_1\right) \right) \mid t \in \interval{ 0 , n } \supset K}.$$ 
       Since $K_1$ is finite, there exists $n_1$ such that $K_1 \subset \interval{-n_1 , n_1}$, 
       and if $\interval {-n_1 + t , n_1 + t} \subset \interval{0,n}$, 
       then $t + K_1 \subset \interval{0,n}$, 
       and $ K \triangle \left( t + K_1 \right) \subset \interval{0,n}$ as well. Hence, it is sufficient to show that for all $t_1\in \Z$ and all $n_1 \in \mathbb{N}$, $\lim \frac{\abs{{\widetilde F}_n\cap {\widetilde F}_n \left(t_1, \interval{ -n_1, n_1  } \right) }}{\abs{ {\widetilde F}_n }} = 1$. We may compute that $${\widetilde F}_n \left(t_1, \interval{ -n_1, n_1 } \right) = \setof{ \left(t + t_1, K \triangle \interval{ -n_1 + t , n_1 + t } \right) \mid t \in \interval{ 0 , n } \supset K 
        }.$$ 
       Now let $n\in \mathbb{N}$ such that $n> 2(|t_1|+n_1)$.  We may observe that given $t \in \interval{0,n}$ if $t_1 \geq 0$, then $t+t_1 \in \interval{0,n}$ iff $t \in \interval{0, n - t_1 }$; and if $t_1 < 0$, then $t+t_1 \in \interval{0,n}$ iff $t \in \interval{\abs{t_1}, n }$. Both these conditions are satisfied by $t \in \interval{\abs{t_1}, n - \abs{t_1}}$. And, for any $K \subset \interval{0,n}$, we have $K \triangle \interval{ -n_1 + t , n_1 + t } \subset \interval{0,n}$ iff $\interval{ -n_1 + t , n_1 + t } \subset \interval{0,n}$, which is equivalent to asking for $t \in \interval{ n_1 , n - n_1 }$. All of these conditions are satisfied by $t \in \interval{\abs{t_1} + n_1 , n - \abs{t_1} - n_1}$. That is, for all $t \in \interval{\abs{t_1} + n_1 , n - \abs{t_1} - n_1}$ and $K \subset \interval{0, n}$, $\left(t,K\right) \in {\widetilde F}_n \cap {\widetilde F}_n \left( t_1, \interval{-n_1,n_1}\right)$. 
        Hence, 
        $$\abs{{\widetilde F}_n \cap {\widetilde F}_n \left( t_1, \interval{-n_1,n_1}\right)} \geq \left( n + 1 - 2\left( \abs{t_1} + n_1 \right) \right) 2^{n+1} = \left( 1 - 2\frac{ \abs{t_1} + n_1 }{n+1} \right) \abs{{\widetilde F}_n}.$$
        The claim then follows by taking the limit as $n\rightarrow \infty$.
    \end{proof}

        


It was mentioned briefly in \cite{Li01} that the growth of tempered (right-)F{\o}lner sequences in lamplighter groups is super-exponential. We now provide a precise computation for the tempered subsequences of $\setof{{\widetilde F}_n}_{n \in \mathbb{N}}$.
\begin{prop}\label{Lindenstrauss averaging sets for the lamplighter group}
    If $\setof{{\widetilde F}_n}_{n \in \mathbb{N}}$ be the F{\o}lner sequence given in \eqref{Lamplighter Right Folner}, and $l:\mathbb{N} \to \mathbb{N}$ be a function defined as a sequence by $l(1)=1$ and $l(n)=3^{l(n-1)}$ for all $n \geq 2$, then $\setof{\widetilde F_{l(n)}}_{n \in \mathbb{N}}$ is a subsequence that is a tempered F{\o}lner sequence.
\end{prop}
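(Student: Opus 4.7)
The plan is to leverage two structural features of the sequence $\setof{\widetilde F_n}_{n\in\mathbb{N}}$. First, the $\widetilde F_n$ are monotone increasing as subsets of $\mathscr{L}$, so $\widetilde F_{l(i)}\subset\widetilde F_{l(n-1)}$ for every $i<n$ and
\[
\bigcup_{i<n}\widetilde F_{l(n)}\widetilde F_{l(i)}^{-1}=\widetilde F_{l(n)}\widetilde F_{l(n-1)}^{-1}.
\]
Second, any subsequence of a right F{\o}lner sequence is again right F{\o}lner, so the F{\o}lner property of $(\widetilde F_{l(n)})_{n\in\mathbb{N}}$ is inherited directly from Lemma \ref{lem: Lamplighter Right Folner}. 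It therefore suffices to bound $\abs{\widetilde F_N\widetilde F_m^{-1}}/\abs{\widetilde F_N}$ uniformly in $n$, where $N:=l(n)$ and $m:=l(n-1)$.

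Compute in the semidirect-product coordinates. From $(t_2,K_2)^{-1}=(-t_2,-t_2+K_2)$ one obtains
\[
(t_1,K_1)(t_2,K_2)^{-1}=\bigl(t_1-t_2,\;K_1\triangle(t_1-t_2+K_2)\bigr),
\]
so every element of $\widetilde F_N\widetilde F_m^{-1}$ has first coordinate $u=t_1-t_2\in\interval{-m,N}$ and second coordinate contained in $J(u):=\interval{0,N}\cup\interval{u,u+m}$. The key combinatorial observation is that for each fixed $u$, every subset of $J(u)$ is realized as $K_1\triangle(u+K_2)$: on the ``free'' pieces $\interval{0,N}\setminus\interval{u,u+m}$ and $\interval{u,u+m}\setminus\interval{0,N}$, only one of $K_1$ or $u+K_2$ contributes, and on the overlap $K_1$ and $u+K_2$ can be adjusted independently to attain any prescribed symmetric difference. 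Hence
\[
\abs{\widetilde F_N\widetilde F_m^{-1}}=\sum_{u=-m}^{N}2^{\abs{J(u)}}.
\]

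Evaluate the sum by splitting into three regimes. For $u\in\interval{-m,-1}$, $J(u)=\interval{u,N}$ has size $N-u+1$; for $u\in\interval{0,N-m}$, $J(u)=\interval{0,N}$ has size $N+1$; for $u\in\interval{N-m+1,N}$, $J(u)=\interval{0,u+m}$ has size $u+m+1$. The two outer ranges are geometric and together contribute a term bounded by $C_0\cdot 2^{N+m+2}$, while the middle range contributes at most $(N+1)2^{N+1}=\abs{\widetilde F_N}$. Dividing through yields an estimate of the form
\[
\frac{\abs{\widetilde F_N\widetilde F_m^{-1}}}{\abs{\widetilde F_N}}\le 1+\frac{C_0\cdot 2^m}{N+1}
\]
for an absolute constant $C_0$. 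The tower $l(n)=3^{l(n-1)}$ is tailor-made to defeat the $2^m$ here: substituting $N=3^m$ gives $2^m/(N+1)\le(2/3)^m\le 2/3$ for every $n\ge 2$, producing a uniform temperedness constant $C\le 1+\tfrac{2C_0}{3}$.

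The principal obstacle is the combinatorial accounting in the enumeration of $\widetilde F_N\widetilde F_m^{-1}$, especially the case analysis on $u$ and the verification that every target subset of $J(u)$ is realised as a symmetric difference. Once that cardinality formula is in hand, the geometric decay $(2/3)^m$ built into the tower construction delivers the uniform bound instantly; conversely, any subexponential choice of $l(n)$ would leave $2^m/(N+1)$ unbounded and the sequence would fail to be tempered, which is what forces the triple-exponential growth rate.
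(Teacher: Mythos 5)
Your proof is correct, but it takes a genuinely different route from the paper's. The paper bounds $\abs{\widetilde F_{l(n)}\widetilde F_{l(n-1)}^{-1}\setminus\widetilde F_{l(n)}}$ by a union bound over the $\abs{\widetilde F_{l(n-1)}}$ translates, reusing the single-translate defect estimate $\abs{\widetilde F_N(t_1,K_1)\setminus\widetilde F_N}/\abs{\widetilde F_N}\le 2(\abs{t_1}+n_1)/(N+1)$ already extracted from the proof of Lemma \ref{lem: Lamplighter Right Folner}; this yields a bound of order $m^2 2^m/N$ and hence the requirement $l(n)\gtrsim(2+\eps)^{l(n-1)}$. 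You instead compute the product set $\widetilde F_N\widetilde F_m^{-1}$ exactly in coordinates, and your combinatorial claim is sound: for each attainable first coordinate $u=t_1-t_2\in\interval{-m,N}$ there is a valid pair $(t_1,t_2)$, and $K_1\triangle(u+K_2)$ realises every subset of $J(u)=\interval{0,N}\cup\interval{u,u+m}$ since the two sets act independently off and on the overlap; the three-regime evaluation of $\sum_u 2^{\abs{J(u)}}$ and the resulting bound $1+C_0 2^m/(N+1)$ check out. Your approach is longer but sharper: it gives a matching lower bound (the regime $u>N-m$ alone contributes about $2^{N+m}$), so it both proves that exponential growth of $l(n)$ in $l(n-1)$ is genuinely necessary for temperedness of this subsequence --- a claim the paper asserts only via the non-sharp direction of its estimate --- and shows that base $2$ (rather than $3$) would already suffice, since your bound has no polynomial prefactor in $m$. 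Two cosmetic points: the growth $l(n)=3^{l(n-1)}$ is an iterated exponential (a tower), not ``triple-exponential''; and you should state explicitly that the nestedness $\widetilde F_{l(i)}\subset\widetilde F_{l(n-1)}$ is what collapses the union in the temperedness condition, which you do --- the paper leaves this implicit.
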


\begin{proof}
    The estimate from the computation in Lemma \ref{lem: Lamplighter Right Folner} yields, for any $(t_1, K_1)$ such that $K_1 \subset \interval{-n_1 , n_1}$ we have 
    $\frac{\abs{{\widetilde F}_n \left( t_1, K_1\right) \backslash {\widetilde F}_n}}{\abs{{\widetilde F}_n}} \leq 2\frac{\abs{t_1} + n_1}{n+1}$. Hence, for any $(t_1, K_1)\in \widetilde F_{l(n-1)}$, we obtain that 
    \begin{eqnarray*}
   \frac{\abs{{\widetilde F}_{l(n)} {\widetilde F}_{l(n-1)} ^{-1} \backslash {\widetilde F}_{l(n)}}}{\abs{\widetilde F_{l(n)}} } &\leq & \frac{4l(n-1)}{l(n)+1}\abs{\widetilde F_{l(n-1)}} \\
        &= &\frac{4l(n-1)2^{l(n-1)+1}\left(l(n-1) + 1\right)}{l(n)+1} \\
        &\leq &\frac{16\left(l(n-1)\right)^2 2^{l(n-1)}}{l(n)}.   
    \end{eqnarray*}
One directly observes that the last estimate can be uniformly bounded iff $l(n)\sim\left(2+\eps\right)^{l(n-1)}$ for any chosen $\varepsilon>0$. To get integer values, the smallest choice is $\varepsilon=1$.
\end{proof}

However, note that the ${\widetilde F}_n$  defined in \eqref{Lamplighter Right Folner}  is not a left-F{\o}lner sequence and does not admit any subsequence that is a left-F{\o}lner sequence.
    For  $t_1>0$ and $n_1\in \mathbb{N}^*$, we have $$\left(t_1, \interval{-n_1,n_1}\right) {\widetilde F}_n = \setof{\left(t_1 + t , \interval{-n_1,n_1} \triangle \left(t_1 + K \right)\right) \mid t\in \interval{0,n} \supset K}.$$
    One can see that for any $K \subset \interval{0,n}$, $\interval{-n_1,n_1} \triangle \left(t_1 + K \right) = \interval{-n_1, -1} \sqcup \left(\interval{0, n_1} \triangle \left(t_1 + K \right)\right)$, which can never be a subset of $\interval{0,n}$. Hence, for any $(t_1, K_1)$ such that $K_1 \subset \interval{-n_1 , n_1}$, $\left( t_1 , K_1\right) {\widetilde F}_n\cap {\widetilde F}_n = \emptyset$ for any $n\in \mathbb{N}$. 
In the following, we will work with $$F_n ={\widetilde F}_n^{-1} {\widetilde F}_n,$$ 
and show that $F_n$ forms a two-sided F{\o}lner sequence of the lamplighter group. 

\begin{prop}\label{Lamplighter two-sided  Folner }\label{prop: biinvariant Folner in LL}
    The sequence $\setof{F_n}_{n \in \mathbb{N}}$ is a two-sided F{\o}lner sequence of $\mathscr{L}$.
\end{prop}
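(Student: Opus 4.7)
The plan is to first reduce the two-sided Følner property to left-invariance alone, and then exploit the decomposition $F_n=\widetilde F_n^{-1}\widetilde F_n$ together with the right-Følner property of $\widetilde F_n$ from Lemma~\ref{lem: Lamplighter Right Folner}. The symmetry $F_n=F_n^{-1}$ is immediate from $F_n=\widetilde F_n^{-1}\widetilde F_n$, and since $\mathscr{L}$ is discrete (hence unimodular) this gives $|F_n h\triangle F_n|=|h^{-1}F_n\triangle F_n|$; so it suffices to prove $|hF_n\triangle F_n|/|F_n|\to 0$ for every $h\in\mathscr{L}$.

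The first quantitative input is the size of $F_n$. A direct computation of the group product yields the decomposition $F_n=\bigcup_{s=0}^n (-s,\emptyset)\widetilde F_n$, and stratifying by the first coordinate $u\in\llbracket -n,n\rrbracket$ one obtains $|F_n|=(n^2+4n+2)\cdot 2^n$, of order $n^2\cdot 2^n$. The second input is the elementary set-theoretic inclusion
\begin{equation*}
hF_n\setminus F_n\;\subset\;(h\widetilde F_n^{-1}\setminus\widetilde F_n^{-1})\widetilde F_n,\qquad F_n\setminus hF_n\;\subset\;(\widetilde F_n^{-1}\setminus h\widetilde F_n^{-1})\widetilde F_n,
\end{equation*}
which follows because if $y=hg_1g_2$ with $g_1\in\widetilde F_n^{-1}$, $g_2\in\widetilde F_n$ and $hg_1\in\widetilde F_n^{-1}$, then $y$ already lies in $\widetilde F_n^{-1}\widetilde F_n=F_n$.

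The crux of the argument is to bound $|(h\widetilde F_n^{-1}\triangle\widetilde F_n^{-1})\widetilde F_n|$ by $O_h(n\cdot 2^n)$ for each fixed $h=(t_1,K_1)$. The naive product bound is of order $n\cdot 4^n$, which after division by $|F_n|$ still leaves $2^n/n\to\infty$ and is therefore useless. Instead, I would show that $h\widetilde F_n^{-1}\triangle\widetilde F_n^{-1}$ concentrates on a boundary layer consisting of only $O_h(1)$ values of the first coordinate: an element $h(s,L)=(t_1+s,K_1\triangle(t_1+L))$ leaves $\widetilde F_n^{-1}$ only when $t_1+s$ exits $\llbracket -n,0\rrbracket$ (forcing $s$ into one of $|t_1|$ values) or when $K_1\triangle(t_1+L)$ exits $\llbracket t_1+s,t_1+s+n\rrbracket$ through an extremal lamp position of $K_1$ not cancelled by $t_1+L$ (forcing $s$ into a further $O_h(1)$ values near the endpoints of $\llbracket -n,0\rrbracket$). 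On each such bad slice, right-multiplying by $\widetilde F_n$ sweeps the first coordinate through $n+1$ new values, while the lamp configurations remain confined to a single translate of $2^{\llbracket u',u'+n\rrbracket}$ of size $2^{n+1}$; this contributes at most $(n+1)\cdot 2^{n+1}$ elements per slice. Summing over the $O_h(1)$ slices yields the desired bound, hence $|hF_n\triangle F_n|/|F_n|=O_h(1/n)\to 0$.

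The main obstacle I expect is the boundary-layer analysis itself: one needs a clean case split according to the sign of $t_1$ and to whether $K_1\triangle(t_1+L)$ escapes $\llbracket t_1+s,t_1+s+n\rrbracket$ through $\min K_1$ or $\max K_1$, verifying in each case that the offending $s$-values form an interval of length bounded in terms of $h$ alone. Once that boundary structure is pinned down, the remainder is a routine geometric enumeration and the conclusion follows.
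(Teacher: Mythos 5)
Your proposal is correct in outline but takes a genuinely different route from the paper. The paper does not reduce to one-sided invariance: it parametrizes the two-sided product $g_1 F_n g_2$ for $g_1=(t_1,\llbracket -n_1,n_1\rrbracket)$, $g_2=(t_2,\llbracket -n_2,n_2\rrbracket)$ directly, observes that every element $(-t,\emptyset)(t',K)$ with $t\in\llbracket\, |t_1|+n_1,\, n-|t_1|-n_1\,\rrbracket$ and $t'\in\llbracket\, |t_2|+n_2,\, n-|t_2|-n_2\,\rrbracket$ already lies in $g_1F_ng_2\cap F_n$, and thereby gets the explicit bound $|g_1F_ng_2\setminus F_n|/|F_n|\le 4(|t_1|+|t_2|+n_1+n_2)/(n+1)$ in one stroke. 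Your route --- symmetry of $F_n$ plus discreteness to pass from right to left translates, the inclusion $hF_n\setminus F_n\subset (h\widetilde F_n^{-1}\setminus\widetilde F_n^{-1})\widetilde F_n$, and the slice-wise count to beat the lossy product bound --- is sound: the bad set $h\widetilde F_n^{-1}\triangle\widetilde F_n^{-1}$ is indeed supported on $O_h(1)$ values of the first coordinate, and each such slice multiplied by $\widetilde F_n$ stays inside a set of size $O_h(n2^n)$ because the lamp window $[t_1+s,t_1+s+n]\cup K_1$ grows only by $O_h(1)$ and is stable under right multiplication by $\widetilde F_n$. Two remarks. First, you still owe the (easy, standard) step that separate left- and right-invariance yield the two-sided condition $|KF_nK\setminus F_n|=o(|F_n|)$ of Proposition \ref{biinvariant Folner sets existence}, e.g.\ via $KF_nK\setminus F_n\subset (KF_n\setminus F_n)K\cup(F_nK\setminus F_n)$; and the boundary-layer case analysis you flag as the main obstacle is genuinely the bulk of the work, though it does go through as you describe. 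Second, the paper's direct computation has a payoff you would need to rederive: the explicit estimate, linear in $|t_1|+|t_2|+n_1+n_2$, is exactly what Corollary \ref{uniform estimate for two-sided  lamplighter Folner sets} reuses to control $|F_mF_nF_m\setminus F_n|$ uniformly in $m$ for the subsequent subsequence extraction, and your asymptotic $O_h(1/n)$ formulation does not immediately provide that uniformity.
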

\begin{proof}
    We begin by computing \begin{align*}
        F_n = {{\widetilde F}_n}^{-1} {\widetilde F}_n & = \setof{\left(-t, -t + K \right)\left(t', K' \right) \mid t,t' \in \interval{0,n} \supset K,K'} \\
        &= \setof{\left(-t + t', \left(-t + K\right) \triangle \left(-t+K'\right) \right) \mid t,t' \in \interval{0,n} \supset K,K'} \\
        &= \setof{\left(-t + t', -t + K \triangle K'\right) \mid t,t' \in \interval{0,n} \supset K,K'} \\
        &= \setof{\left(-t + t', -t + K \right) \mid t,t' \in \interval{0,n} \supset K},\\
        &= \setof{ \left(-t, \emptyset \right) \left( t', K \right) \mid t,t' \in \interval{0,n} \supset K},
    \end{align*}    
    which implies that $F_n$ consists of elements of ${\widetilde F}_n$ shifted to the left by an integer in $\interval{0,n}$. A direct observation yields that  $\abs{F_n} \leq \left(n+1\right)^22^{n+1}$. 
    A lengthy but straightforward computation reveals that  
    $\abs{F_n} = 2^n\left(n^2 + 4n + 2 \right)$.
    We need to prove that for all $t_1, t_2 \in \Z$ and all finite subsets $K_1, K_2 \subset \Z$, 
    $\lim \frac{\abs{ F_n \cap \left( t_1 , K_1 \right) F_n \left( t_2 , K_2 \right) }}{\abs{F_n}} = 1$.
    By an argument similar to the one in the proof of Lemma \ref{lem: Lamplighter Right Folner}, we may replace $K_1$ by $\interval{-n_1,n_1}$ and $K_2$ by $\interval{-n_2,n_2}$ for some $n_1, n_2 \in \mathbb{N}$. Hence, it is sufficient to prove that for all $t_1, t_2 \in \Z$ and all $n_1, n_2 \in \mathbb{N}$, 
    $$\lim \frac{\abs{ F_n \cap \left( t_1 , \interval{-n_1 , n_1} \right) F_n \left( t_2 , \interval{-n_2 , n_2} \right) }}{\abs{F_n}} = 1 .$$
    We first compute 
    \begin{align*}
        &\left( t_1 , \interval{-n_1 , n_1} \right) F_n \left( t_2 , \interval{-n_2 , n_2} \right) \\
        &= \Bigl\{ \left( -t + t_1 +t' + t_2, \interval{-n_1,n_1} \triangle \left( -t +t_1 + K \right) \triangle \interval{ -t +t_1 +t' -n_2 , -t + t_1+t' +n_2} \right) \\
        & \;\;\;\;\;\;\mid t,t' \in \interval{0,n} \supset K\Bigr\} \\
        & = \Bigl\{\left( t_1 - t , \emptyset \right) \left( t' + t_2, \interval{t-t_1-n_1, t-t_1+n_1}\triangle K \triangle \interval{t'-n_2 , t'+n_2}\mid t,t' \in \interval{0,n} \right) \supset K \Bigr\}.
    \end{align*}
 We now attempt to estimate the intersection: $\abs{ g_1 F_n g_2 \cap F_n}$ where $g_1 = \left( t_1 , \interval{-n_1 , n_1} \right)$ and $g_2 = \left( t_2 , \interval{-n_2 , n_2} \right)$. 
    Consider $n>2\max\{|t_1|+n_1, |t_2|+ n_2\}$, we 
    observe that if $t \in \interval{\abs{t_1} + n_1, n - \abs{t_1} - n_1}$ and $t' \in \interval{\abs{t_2} + n_2, n - \abs{t_2} - n_2}$, then, for any $K\subset \interval{0,n}$, $\left(-t, \emptyset \right) \left( t', K \right)\in g_1 F_n g_2 \cap F_n$. Therefore, this allows us to estimate 
    \begin{align*}
         \abs{g_1F_n g_2 \backslash F_n}
         &\leq 2\left(\abs{t_1}+n_1\right)\left(n+1\right)2^{n+1} + 2\left(\abs{t_2}+n_2\right)\left(n+1\right)2^{n+1} \\
        &= 2\left(\abs{t_1} + \abs{t_2} + n_1 + n_2 \right) \left(n+1\right)2^{n+1}.
    \end{align*}
   Consequently,
    \begin{align*}
        \frac{\abs{g_1F_n g_2 \backslash F_n}}{\abs{F_n}}
        &\leq \frac{2\left(\abs{t_1} + \abs{t_2} + n_1 + n_2 \right) \left(n+1\right)2^{n+1}}{2^n\left( n^2 + 4n + 2 \right)} \\
        &\leq \frac{4\left(\abs{t_1} + \abs{t_2} + n_1 + n_2 \right) }{\left( n + 1 \right)}.
    \end{align*}    
    Taking the limit $n \rightarrow \infty$, the claim is proved.
\end{proof}
 \begin{cor}\label{uniform estimate for two-sided  lamplighter Folner sets}
        Let $F_n$ and $F_m$ be two-sided F{\o}lner sets as constructed above, with $n>4m$. Then, 
        $$\frac{\abs{F_m F_n F_m \backslash F_n}}{\abs{F_n}} \leq \frac{144m^5 4^m}{n+1}.$$
    \end{cor}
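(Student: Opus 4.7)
The plan is to reduce the corollary to the per-element bound established inside the proof of Proposition \ref{prop: biinvariant Folner in LL} and then pay the price of a union bound over the pair $(g_1, g_2) \in F_m \times F_m$. First I would record the essential structural fact about $F_m$: from the description
$$F_m = \setof{(-s+s', -s+K) \mid s,s' \in \interval{0,m} \supset K}$$
worked out at the start of the proof of Proposition \ref{prop: biinvariant Folner in LL}, every element $g = (t,K) \in F_m$ satisfies $\abs{t} \leq m$ and $K \subset \interval{-m,m}$. Consequently, any pair $(g_1, g_2) \in F_m \times F_m$, written as $((t_1, K_1),(t_2, K_2))$ with $K_i \subset \interval{-n_i, n_i}$, has $\abs{t_i}, n_i \leq m$ for $i \in \setof{1,2}$.

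The hypothesis $n > 4m$ is precisely what is needed to place us in the regime $n > 2\max\setof{\abs{t_1}+n_1, \abs{t_2}+n_2}$ of the per-element estimate used inside the proof of Proposition \ref{prop: biinvariant Folner in LL}. Applying that estimate uniformly in $(g_1, g_2)$ yields
$$\abs{g_1 F_n g_2 \setminus F_n} \leq 2(\abs{t_1}+\abs{t_2}+n_1+n_2)(n+1)2^{n+1} \leq 8m(n+1)2^{n+1}.$$

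Since $F_m F_n F_m = \bigcup_{g_1, g_2 \in F_m} g_1 F_n g_2$, a union bound gives $\abs{F_m F_n F_m \setminus F_n} \leq \abs{F_m}^2 \cdot 8m(n+1) 2^{n+1}$. I would then divide by $\abs{F_n} = 2^n(n^2+4n+2)$ and substitute $\abs{F_m} = 2^m(m^2+4m+2)$, both explicitly computed in the proof of Proposition \ref{prop: biinvariant Folner in LL}, and finish using the elementary estimates $(n+1)^2 \leq n^2+4n+2$ (valid for every $n \geq 0$) and $(m^2+4m+2)^2 \leq 9m^4$ (valid for $m \geq 3$), with the two small values $m \in \setof{1,2}$ either handled by direct inspection or absorbed by the condition $n > 4m$.

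Everything is mechanical once Proposition \ref{prop: biinvariant Folner in LL} is in hand; the main (in fact only) point requiring care is bookkeeping, in particular making sure that the polynomial factor $(m^2+4m+2)^2$ is cleanly absorbed into $9m^4$ so that the advertised constant $144$ survives, which is the reason we separately deal with the smallest values of $m$.
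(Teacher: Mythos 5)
Your proposal is correct and takes essentially the same route as the paper: the paper likewise invokes the per-element estimate $\frac{\abs{g_1 F_n g_2 \backslash F_n}}{\abs{F_n}} \leq \frac{4(\abs{t_1}+\abs{t_2}+n_1+n_2)}{n+1} \leq \frac{16m}{n+1}$ from the proof of Proposition \ref{prop: biinvariant Folner in LL}, applies the union bound over $F_m \times F_m$, and absorbs $16m\left(m^2+4m+2\right)^2$ into $144 m^5$. Your extra caution about $m \in \setof{1,2}$ is in fact warranted, since the absorption $\left(m^2+4m+2\right)^2 \leq 9m^4$ (and hence the paper's own one-line chain of inequalities) only holds for $m \geq 3$.
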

 
    \begin{proof}
        The proof follows directly from the last estimate in the proof of Proposition \ref{prop: biinvariant Folner in LL}:
        \begin{equation*}
        \frac{\abs{F_m F_n F_m \backslash F_n}}{\abs{F_n}}\leq \frac{16m}{n+1}\abs{F_m}^2=  \frac{16m}{n+1}4^{m}\left(m^2 + 4m + 2 \right)^2 \leq \frac{144m^5 4^m}{n+1},    
        \end{equation*}
    which proves our estimate.    
    \end{proof}

 
  We are now ready to extract subsequences that serve as good averaging sets for establishing the ergodic average dominance \eqref{Ergodic Operator Inequality} for the lamplighter group.

    \begin{prop}\label{our averaging sets for the lamplighter group}
        Let $\setof{F_n}_{n \in \mathbb{N}}$ be the two-sided F{\o}lner sequence as above. If $l:\mathbb{N} \to \mathbb{N}$ is a function defined by $l(1)=1$ and $l(n)=17^{2^n l(n-1)}$ for all $n \geq 2$, then $\setof{F_{l(n)}}_{n \in \mathbb{N}}$ is a subsequence that gives ergodic average dominance \eqref{Ergodic Operator Inequality}.
    \end{prop}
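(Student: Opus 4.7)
The plan is to apply Proposition~\ref{measure comparison in terms of sizes} with the standard parameters $N(n) = 2^n$ and $t_n = 2^{-n}$, for which $\lim r_n N(n) = 2$ and $\lim r_{n+1}/r_n = 1/2$. Since $F_n = \widetilde F_n^{-1} \widetilde F_n$ is symmetric and contains the identity, the sets appearing in the proof of Theorem~\ref{action dominance} simplify to $E_1 = F_1$ and $E_n = E_{n-1}^{N(n)-2} F_{l(n)} E_{n-1}^{N(n)-2}$ for $n \geq 2$. An easy induction gives $e \in E_{n-1}$, so $F_{l(n)} \subset E_n$, and the target $\liminf_n |F_{l(n)}|/|E_n| > 0$ reduces to showing $|E_n \setminus F_{l(n)}|/|F_{l(n)}| \to 0$.

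To this end, I introduce a \emph{spread function} $\sigma: 2^{\mathscr{L}} \to \mathbb{N} \cup \{\infty\}$ via $\sigma(S) = \inf\{M : (t, K) \in S \Rightarrow |t| \leq M, \ K \subset [-M, M]\}$. The lamplighter product and the description of $F_n$ in the proof of Proposition~\ref{prop: biinvariant Folner in LL} yield three facts: (i) subadditivity $\sigma(S_1 S_2) \leq \sigma(S_1) + \sigma(S_2)$; (ii) $\sigma(F_n) = n$; and (iii) setting $t = N/2$ in the description gives $\{(s, L) : |s| \leq N/2, \ L \subset [-N/2, N/2]\} \subset F_N$, so $S \subset F_{2\sigma(S)}$. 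Writing $\sigma_n := \sigma(E_n)$ and $m_n := 2(N(n) - 2) \sigma_{n-1}$, (i)--(ii) give $\sigma_n \leq 2(N(n) - 2) \sigma_{n-1} + l(n)$, while (iii) gives $E_{n-1}^{N(n)-2} \subset F_{m_n}$. Since $l(n)$ is a tower of exponentials, $2(N(n) - 2)\sigma_{n-1}/l(n) \to 0$ super-exponentially, and an induction starting from $\sigma_1 = l(1) = 1$ yields $\sigma_n/l(n) \to 1$; in particular, for any $\delta > 0$ and $n$ large, $\sigma_{n-1} \leq (1+\delta) l(n-1)$.

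For $n$ large, $l(n) > 4 m_n$, so Corollary~\ref{uniform estimate for two-sided  lamplighter Folner sets} applies and gives
$$\frac{|E_n \setminus F_{l(n)}|}{|F_{l(n)}|} \leq \frac{|F_{m_n} F_{l(n)} F_{m_n} \setminus F_{l(n)}|}{|F_{l(n)}|} \leq \frac{144 \, m_n^5 \, 4^{m_n}}{l(n) + 1}.$$
Taking $\log_2$: the exponent of $4^{m_n}$ is $2 m_n \leq 4(1+\delta) \cdot 2^n l(n-1)$, while $\log_2 l(n) = (\log_2 17) \cdot 2^n l(n-1) \approx 4.09 \cdot 2^n l(n-1)$. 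Choosing $\delta$ so that $4(1+\delta) < \log_2 17$, the ratio $4^{m_n}/l(n)$ decays super-exponentially, absorbing the polynomial factor $144 m_n^5$. Hence $|E_n \setminus F_{l(n)}|/|F_{l(n)}| \to 0$; Proposition~\ref{measure comparison in terms of sizes} then yields \eqref{Measure Comparison Inequality} along $\{F_{l(n)}\}$, and via \eqref{eq: main inequality} the ergodic average dominance \eqref{Ergodic Operator Inequality} follows.

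The principal obstacle is the tight asymptotic balance of the two exponents: Corollary~\ref{uniform estimate for two-sided  lamplighter Folner sets} forces $\log_2 l(n) > 2 m_n \sim 4 \cdot 2^n l(n-1)$, so any admissible base $c$ in $l(n) = c^{2^n l(n-1)}$ must strictly exceed $2^4 = 16$, and the choice $c = 17$ is precisely the smallest integer providing the required margin $\log_2 17 > 4$.
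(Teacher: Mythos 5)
Your proposal is correct and follows essentially the same route as the paper: reduce to showing $\abs{E_n \backslash F_{l(n)}}/\abs{F_{l(n)}} \to 0$, embed $E_{n-1}^{2^n-2}$ into some $F_{m}$ with $m \lesssim 2^{n+1}l(n-1)$, invoke Corollary \ref{uniform estimate for two-sided  lamplighter Folner sets}, and balance the exponent of $4^{m}$ against $\log l(n) = (\log 17)\,2^n l(n-1)$. Your ``spread function'' $\sigma$ is just a repackaging of the paper's containment $E_n \subset F_{m(n)}$ obtained from $F_aF_b\subset F_{a+b}$ (your factor $2$ from $S\subset F_{2\sigma(S)}$ plays the role of the paper's crude bound $m(n)<2l(n)$), and both bookkeepings land on the same threshold $\log_2 17 > 4$.
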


    \begin{proof}
        One may observe from the definition of $F_n$ that $F_n = F_n^{-1}$, $(0, \emptyset)\in F_n$, and $ F_n F_m\subset F_{n+m}$ for any $m,n\in \mathbb{N}$.
        Therefore,  using \eqref{eq: def of En} with $N(n)=2^n$, and repeating the same computation as in the polynomial-growth case for balls, we obtain $E_{n-1} = E_{n-1}^{-1} \subset F_{m(n-1)}$, where 
        $m(n) 
        = l(n) + \sum_{j=1}^{n-1} l(j) 2^{\frac{n^2 - j^2 + 3n -3j -4}{2}}\prod_{k=j}^{n-1}\left( 1 - \frac{1}{2^k} \right)$. Hence, by Corollary \ref{uniform estimate for two-sided  lamplighter Folner sets}, we have 
        \begin{align*}
            \frac{\abs{E_n \backslash F_{l(n)}}}{\abs{F_{l(n)}}}
            &= \frac{\abs{\left(E_{n-1}^{-1}\right)^{2^n-2} F_{l(n)} \left(E_{n-1}^{-1}\right)^{2^n-2} \backslash F_{l(n)}}}{\abs{F_{l(n)}}} \\
            &\leq \frac{\abs{{\left(F_{m(n-1)}\right)}^{2^n-2} F_{l(n)} {\left(F_{m(n-1)}\right)}^{2^n-2} \backslash F_{l(n)}}}{\abs{F_{l(n)}}} \\
            &\leq \frac{\abs{{F_{\left(2^n-2\right)m(n-1)}} F_{l(n)} {F_{\left(2^n-2\right)m(n-1)}} \backslash F_{l(n)}}}{\abs{F_{l(n)}}} \\
            &\leq 144 \frac{\left(2^n-2\right)^5\left(m(n-1)\right)^5 4^{\left(2^n-2\right)m(n-1)}}{l(n)+1} \\
            &\leq 144\frac{2^{5n}\left(m(n-1)\right)^5 4^{2^n m(n-1)}}{l(n)+1},
    \end{align*}
    given $l(n)>4(2^n-2)m(n-1)$.
    Hence, a good candidate for $l(n)$ is $l(n) \gtrsim \alpha^{2^n m(n-1)}$ for any $\alpha>4$. 
    Observe that if we assume $l(n) = \alpha^{2^n l(n-1)}$, then for all $n \geq 2$,  
    \begin{align*}
        \log_\alpha\left(l(n)\right) - \log_\alpha\left(l(n-1) \right)
        &= 2^n l(n-1) - 2^{n-1} l(n-2) \\
        &= 2^n \left( l(n-1) - \frac{1}{2} l(n-2) \right) \\
        &> 2^{n} > \log_\alpha(n-1) + n^2 \log_\alpha(2).
    \end{align*}
    This implies that $\frac{l(n)}{l(n-1)} > 2^{n^2}(n-1)$. Using this estimate, one notes that
    \begin{align*}
        m(n) 
        &= l(n) + \sum_{j=1}^{n-1} l(j) 2^{\frac{n^2 - j^2 + 3n -3j -4}{2}}\prod_{k=j}^{n-1}\left( 1 - \frac{1}{2^k} \right) \\
        &< l(n) + \sum_{j=1}^{n-1} l(j) 2^{\frac{n^2 - j^2 + 3n -3j -4}{2}} \\
        &< l(n) + 2^{n^2}\sum_{j=1}^{n-1} l(j) \\
        &< l(n) + 2^{n^2}(n-1)l(n-1)< 2l(n).
    \end{align*}
    Hence, going back to our initial choice, $l(n) \gtrsim \alpha^{2^n m(n-1)}$, 
    a good choice would be $l(n) \sim \left(\alpha^2\right)^{2^n l(n-1)}$. To get integer values, the smallest choice that works is $l(n) = 17^{2^n l(n-1)}$.
    \end{proof}

\section*{Acknowledgements}
The authors would like to thank L{\'e}onard Cadilhac, Simeng Wang, Cameron Wilson, and Quanhua Xu for their valuable suggestions and discussions.

\section*{Declarations}
\textbf{Conflict of Interest: }
The authors have no conflicts of interest to disclose.

\textbf{Data availability: }
There is no data involved with this project.

\bibliographystyle{abbrvdin}
\bibliography{bibliography.bib}

\end{document}